\documentclass[reqno]{amsart}

\usepackage{amsfonts}
\usepackage{amssymb}
\usepackage{amsmath}
\usepackage{mathrsfs}
\usepackage{amsthm}
\usepackage[colorlinks,linkcolor=blue,citecolor=blue]{hyperref}
\newtheorem{theorem}{Theorem}[section]

\newtheorem{lemma}[theorem]{Lemma}
\newtheorem{proposition}[theorem]{Proposition}
\newtheorem{question}[theorem]{Question}

\theoremstyle{remark}
\newtheorem{remark}[theorem]{Remark}

\theoremstyle{definition}
\newtheorem{definition}[theorem]{Definition}

\numberwithin{equation}{section}

\begin{document}

\title[Non-convexity of level sets]
{Non-convexity of level sets for solutions to $k$-Hessian equations in exterior domains}

\author[B. Wang]{Bo Wang}
\address[B. Wang]{School of Mathematics and Statistics\\
Beijing Institute of Technology\\
100081 Beijing\\
P.R. China}
\email{wangbo89630@bit.edu.cn} 

\author[C. Wang]{Cong Wang}
\address[C. Wang]{School of Statistics\\
University of International Business and Economics\\
100029 Beijing\\
P.R. China}
\email{cong\_wang@uibe.edu.cn}

\author[Z. Wang]{Zhizhang Wang}
\address[Z. Wang]{School of Mathematical Science\\
Fudan University\\
200433 Shanghai\\
P.R. China}
\email{zzwang@fudan.edu.cn}

\subjclass{35J60, 35J25, 35E10, 35B40}

\keywords{$k$-Hessian equations, Levels sets, Quasiconvexity, Exterior domains}

\begin{abstract}
	In this paper, we provide examples to show that for $1 \leq k \leq n/2$, solutions to $k$-Hessian equations $S_k(D^2u)=1$ in the exterior of a strictly convex domain need not be quasiconvex, when prescribing quadratic growth at infinity. Additionally, we give a new proof for the quasiconvexity of harmonic functions in such exterior domains that decay to zero at infinity.
\end{abstract}

\maketitle 
  
\section{Introduction}

In this paper, we are concerned with the quasiconvexity of solutions to the exterior Dirichlet problem for $k$-Hessian equations
\begin{equation}\label{eq:EDP}
	\begin{cases}
		S_k(D^2u)=1  & \text{in }\mathbb{R}^n\setminus\bar\Omega_0, \\
		u=0  & \text{on }\partial\Omega_0.
	    \end{cases}
\end{equation}
The solution $u$ to \eqref{eq:EDP} is called quasiconvex if its sublevel sets are all convex (by extending $u=0$ in $\Omega_0$).
Here $\Omega_0$ is a bounded convex domain in $\mathbb{R}^n$, $n\geq3$, and the $k$-Hessian operator is defined by
$$S_k(D^2u):=S_k(\lambda)=\sum_{1\leq i_1<\cdots<i_k\leq n}\lambda_{i_1}\cdots\lambda_{i_k},$$
where $k=1, \cdots, n$, $\lambda=(\lambda_1,\cdots,\lambda_n)$ denotes eigenvalues of the Hessian matrix $D^2u$. Especially, if $k=1,n$, it becomes the Laplace operator and the Monge-Amp\`{e}re operator, respectively. 

To guarantee the well-posedness of problem \eqref{eq:EDP}, one needs to prescribe appropriate asymptotic behavior at infinity which is closely related to the Liouville-type theorems. For the Monge-Amp\`ere equation, i.e. $k=n$,
\begin{equation}\label{eq:MA}
	\det(D^2u)=1\quad\text{in }\mathbb{R}^n\setminus\bar{\Omega}_0,
\end{equation}
Caffarelli and Li \cite{Caffarelli-Li-2003} proved that any locally convex solution $u$ to \eqref{eq:MA} ($n\geq3$) exhibits the asymptotic behavior 
\begin{equation}\label{eq:asy}
	\lim_{|x|\to\infty}\bigg(u(x)-\bigg(\frac{1}{2}x^TAx+b\cdot x+c\bigg)\bigg)=0
\end{equation}
at infinity for some real symmetric positive definite matrix $A\in \mathbb{R}^{n\times n}$ with $\det(A)=1$, vector $b \in \mathbb{R}^n$ and constant $c \in \mathbb{R}$ such that. With such prescribed asymptotic behavior at infinity, they also proved the existence and uniqueness result for the exterior Dirichlet problem. In \cite{Bao-Li-Li}, Bao, Li and Li extended the existence and uniqueness result in \cite{Caffarelli-Li-2003} to $k$-Hessian equations with $2\leq k\leq n$. We also refer to \cite{Bao-Wang-2024, Li-Lu, Li-Xiao, Ma-Zhang,Wang-Wang} for more existence results. 

We now turn to the quasiconvexity of solutions to the problem \eqref{eq:EDP} satisfying the asymptotic behavior \eqref{eq:asy}. Geometrically, the quadratic growth \eqref{eq:asy} ensures that the sublevel sets of $u$ are asymptotically ellipsoidal, and thus convex, at infinity. Since the inner domain $\Omega_0$ is also convex, it is expected that this convexity is inherited globally by all sublevel sets. For the case $k=n$, the admissible solution to \eqref{eq:EDP} satisfying \eqref{eq:asy} is locally convex, and thus is quasiconvex. For general $1\leq k\leq n$, Wang-Bao \cite{Wang-Bao} considered the radial symmetric setting where $\Omega_0$ is a ball centered at the origin with radius $r$, $b=0$, and $A=a^*I$, where $a^*=(C_n^k)^{-1/k}\text{ and } C_n^k=\frac{n!}{k!(n-k)!}$.
They showed that the $k$-convex solution to \eqref{eq:EDP} satisfying \eqref{eq:asy} takes the explicit form
$$u(x)=a^*\int_r^{|x|}(s^k+\alpha s^{k-n})^{1/k}\mathrm{d}s,$$
where
$\alpha$ is a constant uniquely determined by constant $c$ in \eqref{eq:asy}. This solution is radially symmetric and not necessarily convex. However, the solution is increasing with respect to $|x|$, meaning that all sublevel sets of $u$ are concentric spheres and $u$ is therefore quasiconvex. 

Thus a natural question arises: 

\begin{question}\label{Q}
	For general $1\leq k\leq n-1$, do $k$-convex solutions to the problem \eqref{eq:EDP} satisfying the prescribed asymptotic behavior \eqref{eq:asy} inherit the ring-convexity of the exterior domain? That is, is every such solution quasiconvex?
\end{question}

In this paper, we provide a negative answer to this question by constructing explicit counterexamples of $k$-convex solutions that fail to be quasiconvex. Before stating our main theorems rigorously, we briefly review the related literatures.

Extensive studies have been devoted to finding sufficient conditions to ensure the convexity of level sets, particularly for linear and semi-linear equations.  

For exterior domains, a positive result has been established for harmonic functions. Specifically, if $\Omega_0$ is bounded convex in $\mathbb{R}^n$ $(n\geq3)$, then the harmonic function in $\mathbb{R}^n\setminus\bar{\Omega}_0$ with the condition
\begin{equation*}
u=1 \text{ on }\partial\Omega_0\quad\text{and}\quad\lim_{|x|\to\infty} u(x)=0
\end{equation*}
is quasiconcave (superlevel sets are all convex by extending $u=1$ in $\Omega_0$) due to Lewis \cite{Lewis-1977} through a macroscopic argument, and a simpler proof was given by Kawohl \cite{Kawohl-1985}. This problem is a classical topic in mathematical physics and potential theory.

For bounded convex rings (i.e., a domain $U\subset\mathbb{R}^n$ is called a convex ring if $$U=\Omega\setminus\bar{\Omega}_0,$$ 
where $\Omega_0$ and $\Omega$ are two bounded convex domains in $\mathbb{R}^n$ such that $\bar{\Omega}_0\subset\Omega$.), the foundationally positive result was established by Gabriel \cite{Gabriel-1957} for harmonic functions with boundary condition
\begin{equation}\label{eq:u-0-1}
	u=1\text{ on }\partial\Omega_0\quad\text{and}\quad u=0\text{ on }\partial\Omega.
\end{equation}
Lewis \cite{Lewis-1977} extended Gabriel's results to $p$-harmonic functions. After that, Ma, Ou and Zhang \cite{Ma-Ou-Zhang-CPAM} presented a new proof of Gabriel's and Lewis' results by combining Gaussian curvature estimates and deformation process. Caffarelli and Spruck \cite{Caffarelli-Spruck-1982} studied a type of semi-linear equations $\Delta u+f(u)=0$ with \eqref{eq:u-0-1} that are in connection with free boundary problems and proved the quasiconcavity of solutions. Later, Kawohl \cite{Kawohl-1985} proposed a strategy of using the maximum principle for quasiconcave envelope to study the convexity of level sets of solutions to semi-linear equations. This strategy has been successfully extended to establish positive results for a broad class of equations of the type 
\begin{equation}\label{eq:FNE}
	F(x,u,Du,D^2u)=0
\end{equation} 
that are possibly fully nonlinear, we refer to the works \cite{Bian-Long-Salani-2009, Colesanti-Salani-2003, Cuoghi-Salani-2006, Longinetti-Slani-2207} for more results. Contrast to the macroscopic works as above, we also would like to mention a microscopic direction in the study of the convexity, which seeks to establish constant rank theorems for the second fundamental forms of level sets and serve as an extremely useful tool for proving convexity. Pioneered by Korevaar \cite{Korevaar-1990} for $p$-harmonic functions, this line was extended by Xu \cite{Xu-CVPDE} to quasilinear equations. For the fully nonlinear case \eqref{eq:FNE}, Bian, Guan, Ma and Xu \cite{Bian-2011} established a constant rank theorem under specific structural conditions on $F$, which was further refined by Guan and Xu \cite{Guan-Xu-JRAM}.

Compared to many positive results that solutions of PDEs in convex rings that have non-convex sub- or super-level sets, the negative results remain rare. Monneau and Shahgholian \cite{Monneau-Shahgholian-2005} first constructed, for $n=2$, a solution to $\Delta u+f(u)=0$ in a convex ring with non-convex superlevel sets. This result was later extended by Hamel, Nadirashvili and Sire \cite{Hamel-N-S} to arbitrary dimensions under more general conditions on $f$. Very recently, the last named author and Xiao \cite{Wang-Xiao} made progress in finding counterexamples for fully nonlinear equations. Specifically, they considered the Dirichlet problem for $k$-Hessian equations defined on a convex ring ($1\leq k\leq n/2$)
\begin{equation*}
	\begin{cases}
		S_k(D^2u)=1 & \text{in }U:=\Omega\setminus\bar{\Omega}_0, \\
		u=-M & \text{on }\partial\Omega_0, \\
		u=0 & \text{on }\partial\Omega,
	\end{cases}
\end{equation*}
and constructed an example that exhibits non-convex sublevel sets, provided $M$ is a sufficiently large constant.

For the reader's convenience, we give the following definitions of $k$-convex functions and quasiconvex functions. For a domain $\Omega$ in $\mathbb{R}^n$, we say that a function \( u \in C^2(\Omega) \) is \( k \)-convex, if \( \lambda(D^2u) \in \Gamma_k \) in \( \Omega \), where \( \Gamma_k \) is the  Garding's cone
\[
\Gamma_k = \{ \lambda \in \mathbb{R}^n: S_j(\lambda) > 0, \, \forall\, j = 1, \dots, k \}.
\]

\begin{definition}
	For any $k$-convex solution $u\in C^2(U)\cap C^0(\bar{U})$ to \begin{equation*}
	\begin{cases}
		S_k(D^2u)=1 & \text{in }U=\Omega\setminus\bar{\Omega}_0, \\
		u=0 & \text{on }\partial\Omega_0, \\
	\end{cases}
\end{equation*} 
we define
	\begin{equation*}
		\tilde{u}(x)=
		\begin{cases} 
			u(x), & x\in\bar{U}, \\
			0, & x\in \Omega_0.
		\end{cases}
	\end{equation*} 
	We say that $u$ is quasiconvex in $U$ if the sublevel sets of $\tilde{u}$
	$$\Sigma_t(\tilde{u}):=\{x\in\bar{\Omega}: \tilde{u}(x)\leq t\}$$
	are convex for all $t\in\mathbb{R}$.
\end{definition}

Denote 
$$\mathcal{A}_k=\{A: A \text{ is a real }n\times n \text{ symmetric positive definite matrix with }S_k(A)=1\}.$$  
Our main result is the following, which extends the main results of \cite{Wang-Xiao} to exterior domains.
\begin{theorem}\label{thm:main}
	Let $n\geq3$  and $2\leq k \le n/2$. 
	For any given $A\in\mathcal{A}_k$, $b\in\mathbb{R}^n$, 
	there exist a smooth strictly convex domain $\Omega_0$ and a constant $c^*$, 
	depending only on $n$, $k$, $A$ and $b$, 
	such that for every $c\geq c^*$, the problem
	\begin{equation}\label{eq:EDP-1}
		\begin{cases}
			S_k(D^2u)=1 \quad \text{in }\mathbb{R}^n\setminus\bar\Omega_0, \\
			u=0 \quad \text{on }\partial\Omega_0, \\
			\lim\limits_{|x|\to\infty}\left(u(x)-\left(\frac{1}{2}x^TAx+b\cdot x+c\right)\right)=0
	    \end{cases}
	\end{equation}
	admits a unique $k$-convex solution $u\in C^\infty(\mathbb{R}^n\setminus\Omega_0)$ which is not quasiconvex.
\end{theorem}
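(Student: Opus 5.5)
Existence and uniqueness of a $k$-convex solution to \eqref{eq:EDP-1} for $c$ beyond some threshold comes from Bao--Li--Li \cite{Bao-Li-Li}, once $\Omega_0$ is smooth and strictly convex. So the real content is the failure of quasiconvexity. I would adapt the construction of \cite{Wang-Xiao} from convex rings to the exterior setting.

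\textbf{Step 1: normalisation.} After an affine change of variables $x\mapsto Qx+x_0$ one may take $A$ diagonal, $b=0$, and absorb constants into $c$. This needs care, since $S_k$ is invariant only under orthogonal maps. So I keep $A$ diagonal via a rotation and remove $b$ by the translation $x\mapsto x-A^{-1}b$; both preserve strict convexity and quasiconvexity.

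\textbf{Step 2: a non-quasiconvex model solution.} Since $k\le n/2$, write $\mathbb{R}^n=\mathbb{R}^{k}\times\mathbb{R}^{n-k}$ (or use a cylinder with $n-k\ge k$ flat directions). Build a $k$-convex function $w$ that:
\begin{itemize}
\item solves $S_k(D^2w)=1$ near infinity, with asymptotics $\tfrac12x^TAx+c$;
\item is a strict subsolution $S_k(D^2 w)\ge1$ elsewhere;
\item has a sublevel set $\{w\le t_0\}$ that is not convex, for instance a dumbbell shape where two regions of low values are separated by a saddle.
\end{itemize}
The condition $k\le n/2$ is exactly what lets a function be $k$-convex while remaining concave in up to $n-k\ge k$ directions. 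For example, $w$ can be built from
\[
\tfrac12 x^TAx-\varepsilon\,\phi(x'),\qquad x'\in\mathbb{R}^{n-k},
\]
so that the concave perturbation in the $x'$-directions is compensated by large convexity in the other $k$ directions. This mirrors the mechanism in \cite{Wang-Xiao}.

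\textbf{Step 3: choice of $\Omega_0$.} Take $\Omega_0$ to be a small strictly convex domain, say a thin ellipsoid, placed inside one low region of $w$. Define the solution $u$ by Perron's method between the subsolution $w-C$ and the radial-type supersolutions of \cite{Bao-Li-Li}.

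\textbf{Step 4: non-convex sublevel set.} Use comparison: for $c$ large, $u$ stays uniformly close to the family of model profiles in $C^0$ on a fixed bounded region. Choose points $p$, $q$ with midpoint $m$ such that
\[
u(p),\,u(q)<t<u(m).
\]
Then $\{u\le t\}$ is not convex. Increasing $c$ shifts the profile monotonically, which is why the conclusion holds for all $c\ge c^*$ and not just one value.

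\textbf{Main obstacle.} The hardest step is Step 4: obtaining two-sided quantitative $C^0$ control of $u$ by barriers whose gap is smaller than the saddle depth, uniformly in $c\ge c^*$. I would first try explicit barriers of the form $\tfrac12 x^TAx+c-\alpha|x|^{2-n/k}$-type corrections, built from the generalized radial solutions of Wang--Bao \cite{Wang-Bao} and adapted to $A$. Then I would check that the required gap closes as $c\to\infty$ after rescaling $x\mapsto\sqrt{c}\,x$, which reduces the problem to a fixed limiting configuration.
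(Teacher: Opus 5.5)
\textbf{There is a genuine gap.} Steps 2--4 never give a mechanism that makes the \emph{solution} non-quasiconvex.

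\emph{The model in Step 2 does not work as written.} $A$ is fixed with $S_k(A)=1$, and $S_k$ is strictly increasing along positive semidefinite directions inside $\Gamma_k$. So at any point where $-\varepsilon D^2\phi$ is nonzero and negative semidefinite, $w=\frac12x^TAx-\varepsilon\phi(x')$ is either not $k$-convex or has $S_k(D^2w)<1$. This formula has no extra convexity in the other directions to compensate. Also, $k\le n/2$ is not what allows concave directions: $\lambda\in\Gamma_k$ can have $n-k$ negative entries for every $k<n$.

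\emph{More fundamentally, a subsolution does not control the solution.} Even granting a subsolution $w$ with a non-convex sublevel set, Perron's method only gives $u\ge w-C$. To trap $u$ within the saddle depth of $w$ you would need a supersolution with the same non-convex shape, and that is the entire difficulty. Since $u$ is determined by its data on $\partial\Omega_0$ and at infinity, the non-convexity has to be produced by $\Omega_0$ itself.

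\emph{Your treatment of $c$ is unfounded.} Monotonicity of $u$ in $c$ does not preserve non-convex sublevel sets. Rescaling $x\mapsto\sqrt c\,x$ with $\Omega_0$ fixed shrinks the hole onto the minimum point of the quadratic. That is a degenerate configuration, not a fixed one.

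\textbf{The missing idea is that the hole itself is the second low region.} After normalising to $A$ diagonal and $b=0$, the paper takes $\Omega_0=B_\varepsilon(x_0)$ with $x_0\neq0$. It proves that $\tilde u^\varepsilon\to\psi=\frac12x^TAx+c$ locally uniformly away from $x_0$ as $\varepsilon\to0$, i.e.\ a point is removable.

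This is exactly where $k\le n/2$ enters:
\begin{itemize}
\item Barrier bounds $|Du^\varepsilon|\lesssim C_\varepsilon$ and $|D^2u^\varepsilon|\lesssim C_\varepsilon\varepsilon^{-1}$ near the hole, combined with the divergence form of $S_k$, show that the $k$-Hessian measure of the mollified functions near $x_0$ is $O(\varepsilon^{n-2k})$, or $O(|\log\varepsilon|^{-k})$ if $k=n/2$.
\item By Trudinger--Wang weak continuity, the limit therefore has Lebesgue $k$-Hessian measure.
\item It is then H\"older continuous, and equals $\psi$ by comparison.
\end{itemize}

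The contradiction follows. If every $u^\varepsilon$ were quasiconvex, the segment from a point of the hole (where $\tilde u^\varepsilon=0<c$) to the origin (where $\tilde u^\varepsilon\le\psi(0)=c$) would lie in $\{\tilde u^\varepsilon\le c\}$. Letting $\varepsilon\to0$ gives $\psi\le c$ on $(x_0,0]$, contradicting $\psi(x)>c$ for $x\neq0$.

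In that argument $c\ge c^*$ is fixed first and $\varepsilon$ is then chosen small, so no monotonicity in $c$ is used.
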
  

For the Laplace case, we have the similar result under an additional condition on  $A$.

\begin{theorem}\label{thm:main-1}
	Let $n\geq3$ and $k=1$. For any $A\in\mathcal{A}_1$, denote $\lambda_{\max}(A)$ the maximum eigenvalue of $A$. If $\lambda_{\max}(A)<1/2$, $b\in\mathbb{R}^n$, 
	there exist a smooth strictly convex domain $\Omega_0$ and a constant $c^*$, 
	depending only on $n$, $A$ and $b$, 
	such that for every $c\geq c^*$, the problem \eqref{eq:EDP-1} admits a unique solution $u\in C^\infty(\mathbb{R}^n\setminus\Omega_0)$ which is not quasiconvex.
\end{theorem}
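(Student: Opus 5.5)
We prove Theorem~\ref{thm:main-1} by a blow-down argument as $c\to\infty$. The non-quasiconvexity is located not near $\partial\Omega_0$ but at the intermediate scale $|x|\sim c^{1/n}$. At that scale the quadratic part of $u$ and the capacitary (Newtonian) part have the same size, and their competition bends the level sets the wrong way. The domain $\Omega_0$ can be fixed beforehand, for instance a ball; only $c$ is sent to infinity.

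\emph{Step 1: existence, uniqueness, decomposition.} For $k=1$ the problem is linear. Set $w_0(x)=\frac12x^TAx+b\cdot x$. Let $v$ be the capacitary potential of $\Omega_0$, that is, $\Delta v=0$ outside $\bar\Omega_0$, $v=1$ on $\partial\Omega_0$ and $v\to0$ at infinity. Let $h_1$ be the decaying harmonic function with $h_1=-w_0$ on $\partial\Omega_0$; it exists since $n\ge3$. Then
$$u=c(1-v)+w_0+h_1$$
solves \eqref{eq:EDP-1}, because $\operatorname{tr}A=1$. Uniqueness follows from the maximum principle applied to the difference of two solutions, which is harmonic, vanishes on $\partial\Omega_0$ and tends to $0$ at infinity. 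Smoothness up to the boundary is standard. We use the expansions $v(x)=\kappa|x|^{2-n}+O(|x|^{1-n})$ with $\kappa>0$ (proportional to the capacity) and $h_1=O(|x|^{2-n})$.

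\emph{Step 2: blow-down.} Put $x=c^{1/n}y$ and
$$u_c(y):=c^{-2/n}\bigl(u(c^{1/n}y)-c\bigr).$$
Then
$$u_c(y)=\tfrac12y^TAy-\kappa|y|^{2-n}+O\bigl(c^{-1/n}\bigr),$$
since $c\,v(c^{1/n}y)=c^{2/n}\kappa|y|^{2-n}+O(c^{1/n})$. The convergence is in $C^2_{\mathrm{loc}}(\mathbb{R}^n\setminus\{0\})$, because all pieces are explicit harmonic expansions plus a quadratic. The limit
$$f(y)=\tfrac12y^TAy-\kappa|y|^{2-n}$$
satisfies $\Delta f=1$ away from the origin. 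Sublevel sets of $u$ at levels $t=c+c^{2/n}s$ are dilations of sublevel sets of $u_c$ at level $s$. Hence it suffices to find a level $s$ and a point $y_0\neq0$ on $\{f=s\}$ with $\nabla f(y_0)\neq0$ at which the second fundamental form of $\{f=s\}$ has a strictly negative eigenvalue (with respect to the sublevel side). That is, we need a unit vector $\tau\perp\nabla f(y_0)$ with $\tau^TD^2f(y_0)\tau<0$. This condition is open under $C^2$ perturbations. Therefore $\{u_c\le s_c\}$ fails to be convex near $y_0$ for all $c\ge c^*$, with the level $s_c$ adjusted so that it passes near $y_0$. One must also check that this sublevel piece genuinely belongs to $\Sigma_t(\tilde u)$. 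This holds because $\Omega_0$ shrinks to the origin at this scale and $t>0$, so $\Omega_0$ lies inside the sublevel set anyway.

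\emph{Step 3: the explicit computation (main obstacle).} Write $\beta=\kappa(n-2)|y|^{-n}$ and $\hat y=y/|y|$. Then
$$\nabla f=Ay+\beta y,\qquad D^2f=A+\beta\bigl(I-n\,\hat y\hat y^T\bigr),$$
so for unit $\tau\perp(A+\beta I)y$ we get
$$\tau^TD^2f\,\tau=\tau^TA\tau+\beta\bigl(1-n(\hat y\cdot\tau)^2\bigr).$$
Along eigen-axes this quantity is positive, so $y$ must be taken off-axis. We work in the plane of two eigenvectors $e_1,e_2$ with eigenvalues $\lambda_1>\lambda_2$. Take $y=r(\cos\theta,\sin\theta)$ and let $\tau$ be the in-plane unit vector orthogonal to $\nabla f$. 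Then $(\hat y\cdot\tau)^2=\sin^2\varphi$, where $\varphi$ is the angle between $y$ and $(A+\beta)y$. We then need to choose $\theta$ and $\beta$ (equivalently $r$) so that
$$\tau^TA\tau+\beta\bigl(1-n\sin^2\varphi\bigr)<0.$$
The difficulty is that $\sin\varphi$ is small unless $\lambda_1/\lambda_2$ is large compared with $1+\beta/\lambda_2$. Meanwhile the negative term needs $\beta n\sin^2\varphi$ to beat $\beta+\tau^TA\tau$.

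I would first optimize in $\theta$ for fixed ratio $\beta/\lambda_2$, then send $\lambda_2$ to be the smallest eigenvalue. The hypothesis $\lambda_{\max}(A)<1/2$ together with $\operatorname{tr}A=1$ should guarantee enough anisotropy or room in the remaining directions to make this inequality solvable. Possibly $\tau$ also needs a component in a third eigendirection, using $n\ge3$. Pinning down exactly how $\lambda_{\max}<1/2$ enters this elementary but delicate inequality is the step I expect to require the most care.

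Once a point $y_0$ with the strict inequality is found, Step 2 yields $c^*$ depending only on $n$, $A$, $b$ (through $\kappa$ and the error constants for the fixed $\Omega_0$). This gives the theorem for all $c\ge c^*$.
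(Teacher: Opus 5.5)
\textbf{There is a genuine gap in Step 3, and it cannot be closed.} Steps 1 and 2 are sound, and your linear existence argument is simpler than the paper's approximation scheme. The problem is that the hypothesis $\lambda_{\max}(A)<\frac12$ forces no anisotropy: $A=\frac1n I$ satisfies it for every $n\ge3$.

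For this $A$ your blow-down limit $f(y)=\frac1{2n}|y|^2-\kappa|y|^{2-n}$ is radial and strictly increasing in $|y|$, so its level sets are round spheres. In your own formula, $\nabla f=(\frac1n+\beta)y$ forces $\hat y\cdot\tau=0$, hence $\tau^TD^2f\,\tau=\frac1n+\beta>0$.

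The obstruction is not confined to the scale $c^{1/n}$. Take $A=\frac1nI$, $b=0$ and any fixed smooth strictly convex $\Omega_0$. Then the solution is quasiconvex for every sufficiently large $c$:
\begin{itemize}
\item On a fixed compact set, $u/c$ is $C^2$-close to $1-v$. Its level sets are strictly convex and $|Dv|>0$ (Theorem \ref{thm:main-H}), so the level sets of $u$ are strictly convex there for large $c$.
\item For $|x|\ge R'$, with $R'$ depending only on $\Omega_0$, the angle between $Du$ and $x$ is $O(|x|^{-1})$ uniformly in $c$. This makes the tangential Hessian of $u$ at least $\frac1{2n}$.
\end{itemize}
So fixing $\Omega_0$ and sending $c\to\infty$ cannot prove the theorem for all admissible $A$. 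Your mechanism does work when $\lambda_{\min}(A)/\lambda_{\max}(A)$ is small: in your planar computation, take $\lambda_{\min}\ll\beta\ll\lambda_{\max}$ and $\sin^2\theta\in(\frac1n,1)$. That covers only part of the class.

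You also misread the role of the hypothesis. In the paper it enters only through the Bao--Li--Li subsolution $\underline u$: for $k=1$ one has $h_1(a)=\lambda_{\max}(A)$, and \eqref{eq:hk} requires $\frac{1}{2\lambda_{\max}(A)}>1$. It serves the barrier and approximation construction, which your Step 1 makes unnecessary.

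\textbf{The missing idea is to let the domain depend on $c$} and to create the non-convexity near a tiny off-centre hole, not at scale $c^{1/n}$. After normalizing to diagonal $A$ and $b=0$, the paper fixes $c\ge c^*$, takes $\Omega_0=B_\varepsilon(x_0)$ with $x_0\neq0$, and sends $\varepsilon\to0$. It shows $\tilde u^\varepsilon\to\psi=\frac12x^TAx+c$ locally uniformly away from $x_0$ (Proposition \ref{prop:limit}, via $k$-Hessian measures). For $k=1$ this is immediate from your decomposition: $c\,(\varepsilon/|x-x_0|)^{n-2}\to0$, and the correction $h_1^\varepsilon$ is bounded by a constant times $(\varepsilon/|x-x_0|)^{n-2}$.

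Now consider the sublevel set $\{\tilde u^\varepsilon\le c\}$:
\begin{itemize}
\item it contains points of $B_\varepsilon(x_0)$, where $\tilde u^\varepsilon=0$;
\item it contains the origin, where $\tilde u^\varepsilon\le\psi(0)=c$;
\item but near $x_0/2$ one has $\tilde u^\varepsilon\approx c+\frac18x_0^TAx_0>c$.
\end{itemize}
So it is not convex. This works for every $A$, including the isotropic one. It requires $c\,(\varepsilon/|x_0|)^{n-2}$ to be small compared with $x_0^TAx_0$, so $\varepsilon$ depends on $c$. That is also what the paper's proof actually delivers, despite the quantifier order in the statement. The isotropic example above shows this dependence cannot be removed.
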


\begin{remark}\label{rmk}
	In Theorems \ref{thm:main} and \ref{thm:main-1}, there actually exist infinitely many domains $\Omega_0$ that satisfy the above conclusions.
\end{remark}

In the last part of the paper, we revisit the classical result in \cite{Lewis-1977} and \cite{Kawohl-1985} for the Laplace equation in exterior domains. We provide a new proof here in the microscopic perspective by using the explicit decay estimate of the Gaussian curvature for level sets at infinity, combining with the superharmonicity of curvature functions established by Ma-Zhang \cite{Ma-Zhang-2021}.   

\begin{theorem}[Microscopic version proof of \cite{Lewis-1977} and \cite{Kawohl-1985}]\label{thm:main-H}
	Let $n\geq3$ and $\Omega_1$ be a smooth, strictly convex domain in $\mathbb{R}^n$. Then the solution $u\in C^\infty(\mathbb{R}^n\setminus\Omega_1)$ to the problem
	\begin{equation}\label{eq:H-EDP}
		\begin{cases}
			\Delta u=0 \,\qquad \text{in }\mathbb{R}^n\setminus\bar{\Omega}_1, \\
			u=1 \quad\quad\quad \text{on }\partial\Omega_1, \\
			\lim\limits_{|x|\to\infty} u(x)=0,
		\end{cases}	
	\end{equation}
	is quasiconcave. Moreover, superlevel sets of $u$ are all strictly convex by extending $u = 1$ in $\Omega_1$.
\end{theorem}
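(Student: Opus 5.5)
We plan a continuity (deformation) argument in the spirit of Ma--Ou--Zhang \cite{Ma-Ou-Zhang-CPAM}. Throughout we use the microscopic tools the paper points to: the superharmonicity-type inequality of Ma--Zhang \cite{Ma-Zhang-2021} for a curvature function of level sets, and explicit control of the level sets at infinity.

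\textbf{Preliminaries and the curvature function.} By the maximum principle $0<u<1$ in $\mathbb{R}^n\setminus\bar\Omega_1$. The Hopf lemma gives $|Du|>0$ on $\partial\Omega_1$. The expansion at infinity
$$u(x)=\frac{m}{|x|^{n-2}}+\frac{p\cdot x}{|x|^{n}}+O(|x|^{-n}),\qquad m>0,$$
with corresponding expansions for derivatives, shows that $|Du|\ge c|x|^{1-n}$ for $|x|$ large.

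\textbf{Deformation.} Connect $\Omega_1$ to a ball by a smooth family of strictly convex domains $\Omega^t$, $t\in[0,1]$, for instance via Minkowski combination $\Omega^t=(1-t)B+t\Omega_1$. Let $u^t$ be the corresponding solutions. For the ball, $u^0=(r/|x|)^{n-2}$, whose superlevel sets are balls. Let $T$ be the set of $t$ for which (i) $|Du^t|>0$ everywhere and (ii) every level set of $u^t$ has positive definite second fundamental form. Then $0\in T$, and we show $T$ is open and closed.

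For openness we need uniform quantitative bounds. In a compact region this is continuity of $u^t$ in $C^3$. Near infinity we need a uniform estimate of the form
$$K(x)\,|x|^{n-1}\ge c>0 ,$$
where $K$ is the Gauss curvature of the level set through $x$. This estimate comes from the expansion: the level sets $\{u=s\}$ with $s$ small are $C^2$-perturbations of spheres of radius $(m/s)^{1/(n-2)}$. After rescaling by that radius, the level sets converge in $C^2$ to the unit sphere, uniformly in $t$ because $m_t$ and $p_t$ depend continuously on $t$. I expect this decay estimate at infinity to be the \emph{main technical step}. It is needed both because the domain is noncompact and because it replaces a boundary condition at infinity for the minimum principle.

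For closedness, take $t_j\to t_0$. The limit has weakly convex level sets, so we must exclude degeneracy. For this we work with the curvature quantity
$$\varphi=|Du|^{\alpha}K \quad\text{or}\quad \varphi=|Du|^{\beta}\kappa_{\min},$$
for which Ma--Zhang \cite{Ma-Zhang-2021} show that, on level sets that are already convex, $\varphi$ satisfies an elliptic differential inequality
$$\Delta\varphi+b\cdot D\varphi\le 0$$
modulo terms vanishing where $\varphi=0$. Hence $\varphi$ obeys a strong minimum principle: if $\varphi\ge0$ and $\varphi$ vanishes at an interior point, then $\varphi\equiv0$.

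\textbf{Excluding degeneracy.} A zero of $\varphi$ cannot lie at infinity, by the decay estimate, since the rescaled $\varphi$ is bounded below there. It cannot lie on $\partial\Omega_1$, since $\partial\Omega^{t_0}$ is strictly convex and $|Du|>0$ there. An interior zero would force $\varphi\equiv0$, contradicting the behavior at infinity. We must also check that $|Du|$ does not vanish in the limit. Where the level sets are convex and nested, $Du\neq0$ follows from quasiconcavity: a critical point would produce a degenerate level set, which is excluded by the same minimum principle applied to $\varphi$ with an appropriate power of $|Du|$. Alternatively, apply Hopf's lemma on each convex superlevel set.

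\textbf{Conclusion.} Thus $T=[0,1]$, and at $t=1$ all level sets of $u$ are strictly convex and the gradient never vanishes. Since $u$ decreases outward, each superlevel set $\{u\ge s\}$, extended by $\Omega_1$, is the region bounded by a connected closed strictly convex hypersurface. Hence it is strictly convex, which gives quasiconcavity.
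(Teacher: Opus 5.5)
Your skeleton matches the paper's: a Minkowski deformation from a ball, the expansion $K(x)=|x|^{1-n}+O(|x|^{-n})$ (uniform in $t$) for openness and to stop degeneracy escaping to infinity, and Ma--Zhang's superharmonicity to rule out a first degenerate time. The step that does not go through as you justify it is closedness.

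Ma--Zhang's theorem concerns the specific quantity $\psi=(|Du|^{n-3}K)^{1/(n-1)}$. It gives $\Delta\psi\le0$ for harmonic $u$ whose level sets are \emph{strictly} convex. It says nothing at the limiting time $t_0$, where $K_{t_0}$ may vanish and $\psi_{t_0}$ need not be smooth. So the inequality you attribute to them, ``on level sets that are already convex, modulo terms vanishing where $\varphi=0$'', is not available from that source.

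Your candidate quantities are also not the right objects. For $\varphi=|Du|^{n-3}K=\psi^{n-1}$, where $\varphi>0$ one only gets $\Delta\varphi\le\frac{n-2}{n-1}|D\varphi|^2/\varphi$. That is not of the form $\Delta\varphi\le C(\varphi+|D\varphi|)$ that a Hopf-type strong minimum principle requires. Nothing of this kind is known for $|Du|^\beta\kappa_{\min}$.

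Your route can be repaired with one observation. Since $u^{t_j}\to u^{t_0}$ in $C^2_{\mathrm{loc}}$ with $t_j<t_0$ and $|Du^{t_0}|>0$, we have $\psi_{t_j}\to\psi_{t_0}$ locally uniformly, and the super-mean-value property survives such limits. Hence $\psi_{t_0}$ is a continuous, nonnegative superharmonic function on the connected set $\mathbb{R}^n\setminus\bar\Omega_{t_0}$. It is positive near the strictly convex boundary, so the classical strong minimum principle excludes an interior zero.

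The paper avoids the degenerate time altogether. At each nondegenerate $t_i<t_0$ it compares the smooth superharmonic $\psi_{t_i}$ with the harmonic $c_0u^{t_i}$. Here $c_0>0$ is a lower bound for $\psi_{t_i}$ on $\partial\Omega_{t_i}$, uniform in $i$, obtained from the quantitative Hopf lemma for $|Du^{t_i}|$ and the uniform convexity of $\partial\Omega_t$. Both functions tend to $0$ at infinity, so $\psi_{t_i}\ge c_0u^{t_i}$. Letting $i\to\infty$ at the single point $x_{t_0}$ gives $\psi_{t_0}(x_{t_0})\ge c_0u^{t_0}(x_{t_0})>0$, contradicting $K_{t_0}(x_{t_0})=0$. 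The barrier costs a uniform boundary estimate but needs only the comparison principle for smooth functions. Your repaired route is shorter but relies on stability of superharmonicity under uniform limits.

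Finally, your Hopf-lemma argument for $Du^{t_0}\neq0$, using a supporting half-space of each convex superlevel set, is fine; the paper simply cites Kawohl. Your other suggestion there, using $\varphi$, does not work, since $\varphi$ is undefined at critical points.
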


Now we sketch the proof of Theorems \ref{thm:main} and \ref{thm:main-1}. By an normalization, we can reduce Theorems \ref{thm:main} and \ref{thm:main-1} to the special and simple case where $A\in\mathcal{A}_k$ is diagonal and $b$ vanishes. We choose the candidate domain $\Omega_0=B_\varepsilon(x_0)$ to be a ball centered at $x_0$ located away from the origin and a sufficiently small radius $\varepsilon$. We start by studying the approximating problem in the bounded convex ring $\Omega_R^\varepsilon$ (see Section \ref{sec:u-e-R} for the precise definition). The existence of solutions $u^{\varepsilon,R}$ follows from Guan \cite{Guan-Duke}, provided that we can construct a suitable subsolution by utilizing generalized symmetric functions. To investigate the limit of $u^{\varepsilon,R}$ as $R\to\infty$ and $\varepsilon\to 0$, we establish uniform $C^2$ estimates for $u^{\varepsilon,R}$. The singularity at $x_0$ is addressed in the spirit of \cite{Wang-Xiao} by employing Hessian measures theory developed in \cite{Trudinger-Wang-II}.

The results in Theorems \ref{thm:main} and \ref{thm:main-1} illustrate that the quasiconvexity of solutions is highly sensitive to minor deformations of the domain. To the best of our knowledge, the results are new especially for fully nonlinear equations in exterior domains.

The organization of the paper is as follows. In Section \ref{sec:u-e-R}, we solve the approximating Dirichlet problem
for $k$-Hessian equations in a non-concentric convex ring and derive the uniform $C^2$ estimates for approximating solutions. In Section \ref{sec:u-e}, we investigate the limit as the outer boundary expanding to infinity and the inner boundary shrinking; the latter employs the theory of $k$-Hessian measures. Theorems \ref {thm:main} and \ref{thm:main-1} will be proved in Section \ref{sec:proof}. In Section \ref{sec:convexity-H}, we analyze the asymptotic behavior of Gaussian curvatures functions at infinity and carry out a deformation process to prove Theorem \ref{thm:main-H}.  

Throughout this paper, denote $\lambda_{\text{max}}(A)=\max\limits_{1\leq i\leq n}\lambda_i(A)$ and $\lambda_{\text{min}}(A)=\min\limits_{1\leq i\leq n}\lambda_i(A)$ for $A\in\mathcal{A}_k$. We denote $S_k^{ij}(D^2u)=\frac{\partial S_k(D^2u)}{\partial u_{ij}}$ and will also write $S_k^{ij}(D^2u)$ as $S_k^{ij}$ for simplicity when there is no ambiguity.

\section{Approximating problems in convex rings}\label{sec:u-e-R}

In this section, we investigate the Dirichlet problem of $k$-Hessian equations in the bounded non-concentric convex rings. To formulate the problem, we assume throughout this and the next section that 
$$A=\text{diag}(a_1,\cdots,a_n)\in\mathcal{A}_k,$$
where $n\geq3$ and $1\leq k\leq n$. When $k=1$, we further assume $\lambda_{\mathrm{max}}(A)<\frac{1}{2}$.
Let 
$$s=\frac{1}{2}x^TAx=\frac{1}{2}\sum_{i=1}^n a_ix_i^2\quad\text{and}\quad E_R(\bar{x})=\bigg\{x\in\mathbb{R}^n: \frac{1}{2}(x-\bar{x})^TA(x-\bar{x})<R\bigg\}.$$
Denote $\Omega_R^\varepsilon=E_R(0)\setminus\bar{B}_\varepsilon(x_0)$, where
$x_0\in B_{\frac{1}{2}}(0)\setminus\{0\}$, $0<\varepsilon<\frac{1}{2}$, $R>3R_0$ with $R_0\gg\lambda_{\text{max}}(A)$.
We consider the following approximating problem
\begin{equation}\label{eq:u-e-R}
	\begin{cases}
		S_k(D^2u)=1 & \text{in }\Omega_R^\varepsilon, \\
		u=0 & \text{on }\partial B_\varepsilon(x_0), \\
		u=\underline{u} & \text{on }\partial E_R(0).
	\end{cases}
\end{equation} 
In this formulation, $\underline{u}$ is a generalized symmetric function, as defined in \cite{Bao-Li-Li},
\begin{equation*}  
		\underline{u}(x):=\int_{R_0}^{s}\bigg(1+\alpha t^{-\frac{k}{2h_k(a)}}\bigg)^\frac{1}{k}\mathrm{d}t,\quad x\in\mathbb{R}^n\setminus\{0\},
\end{equation*}
where $\alpha$ is a positive parameter, $a=(a_1,\cdots,a_n)$ and $h_k(a)=\max\limits_{1\leq i\leq n}S_{k-1}(a)|_{a_i=0}\cdot a_i$ satisfies
\begin{equation}\label{eq:hk}
	1<\frac{k}{2h_k(a)}\leq \frac{n}{2}.
\end{equation}
Such exterior boundary data is to ensure the solution $u^{\varepsilon, R}$ to problem \eqref{eq:u-e-R} approximates the expected quadratic asymptotic behavior at infinity.  Our goal in what follows is to establish the solvability and uniform estimates for problem \eqref{eq:u-e-R}.

\subsection{Solvability of approximating problems.}\label{subsec:solvability}
By Proposition 2.1 in \cite{Bao-Li-Li}, $\underline{u}\in C^{\infty}(\mathbb{R}^n\setminus\{0\})$ is $k$-convex and satisfies
\begin{equation}\label{eq:sub-0}
	S_k(D^2\underline{u})\geq1\quad\text{in }\mathbb{R}^n\setminus\{0\}.
\end{equation}
 Moreover, \eqref{eq:hk} yields
\begin{equation}\label{eq:subsol-u-2}
	\begin{split}
	\underline{u} (x) & =s+\int_{R_0}^{s}\bigg(\bigg(1+\alpha t^{-\frac{k}{2h_k(a)}}\bigg)^\frac{1}{k}-1\bigg)\mathrm{d}t-R_0\\
	& = s+\mu(\alpha)-\int_s^\infty\bigg(\bigg(1+\alpha t^{-\frac{k}{2h_k(a)}}\bigg)^\frac{1}{k}-1\bigg)\mathrm{d}t \\
	& = \frac{1}{2}x^TAx + \mu(\alpha) + o(1)\quad\text{as }|x|\to\infty,
	\end{split}
\end{equation}  
where $\mu(\alpha)$ is a strictly increasing function with respect to $\alpha$ defined as
$$\mu(\alpha):= \int_{R_0}^{\infty} \bigg( \bigg( 1 + \alpha t^{-\frac{k}{2h_k(\alpha)}} \bigg)^{\frac{1}{k}} - 1 \bigg) \mathrm{d}t-R_0.$$
We first show the existence and uniqueness of the solution to problem \eqref{eq:u-e-R} by constructing appropriate subsolutions.

\begin{proposition}\label{prop:u-e-R-exist}
	There exists a positive constant $\alpha_0$, depending only on $n$, $k$, $\lambda_{\mathrm{max}}(A)$ and $R_0$, such that for every $\alpha\geq\alpha_0$, the problem \eqref{eq:u-e-R} has a unique $k$-convex solution $u^{\varepsilon, R} \in C^{\infty}(\bar{\Omega}_{R}^{\varepsilon})$. 
\end{proposition}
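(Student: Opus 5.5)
The plan is to invoke Guan's theorem \cite{Guan-Duke}: the Dirichlet problem for $S_k(D^2u)=1$ on a bounded smooth domain is uniquely solvable in $C^\infty$ among $k$-convex functions, provided there is a smooth $k$-convex strict subsolution $w$ with $S_k(D^2w)\geq 1$ that attains the boundary data. Uniqueness follows from the comparison principle for $k$-convex solutions. So the whole task is to build a subsolution $w\in C^\infty(\bar\Omega_R^\varepsilon)$ with $w=0$ on $\partial B_\varepsilon(x_0)$, $w=\underline{u}$ on $\partial E_R(0)$, $\lambda(D^2w)\in\Gamma_k$ and $S_k(D^2w)\geq1$. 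We cannot use $\underline u$ itself: it is $k$-convex with $S_k\ge1$ by \eqref{eq:sub-0}, but it does not vanish on $\partial B_\varepsilon(x_0)$.

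\textbf{Step 1 (local subsolution near the hole).} Around $x_0$, take a shifted version of the same generalized symmetric construction,
\[
\underline{v}(x)=\int_{\varepsilon'}^{s_{x_0}(x)}\Big(1+\beta t^{-\frac{k}{2h_k(a)}}\Big)^{1/k}dt, \qquad s_{x_0}=\tfrac12(x-x_0)^TA(x-x_0).
\]
Here $\varepsilon'$ is chosen so that $\underline v\le 0$ on $\bar B_\varepsilon(x_0)$. Alternatively, use a radial function $a^*$-type centered at $x_0$ that equals $0$ on $\partial B_\varepsilon(x_0)$ and grows steeply, such as $\varphi(|x-x_0|)$ with $\varphi'>0$ large and $S_k\ge1$, taken from the explicit radial solution family of \cite{Wang-Bao}. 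By Proposition 2.1 of \cite{Bao-Li-Li}, this function is $k$-convex with $S_k\ge1$.

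\textbf{Step 2 (glue with $\underline u$).} Set $w=\max(\underline{v},\,\underline{u}-C)$ for a suitable constant, or better $\underline u$ itself with $\alpha$ large. The point of taking $\alpha\ge\alpha_0$ is the following. For $s\le 3R_0$ (which contains $B_1(0)\supset B_\varepsilon(x_0)$) we have $\underline u<0$, and $\underline u\to-\infty$ uniformly on compact regions of $\{s<R_0\}$ as $\alpha\to\infty$. Near $\partial E_R(0)$, on the other hand, $\underline u$ is the prescribed data.

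Choose $\alpha_0$, depending on $n,k,\lambda_{\max}(A),R_0$, so that $\underline u<\underline v$ on a fixed sphere $\partial B_{1}(x_0)$, or on a level set of $s_{x_0}$, while $\underline v<\underline u$ on $\partial B_\varepsilon(x_0)$ is false. Re-arrange the choices so that $\underline v=0\ge\underline u$ near $\partial B_\varepsilon(x_0)$ and $\underline v\le\underline u$ near $\partial E_R$. Since $\underline v$ grows only like $s_{x_0}$, this can be ensured by making $\underline u$ dominate outside a neighborhood of $x_0$, for instance by subtracting a large constant from $\underline v$ away from the hole and adjusting. Then $\max$ equals $\underline v$ near the hole and $\underline u$ near the outer boundary.

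The maximum of two $k$-convex subsolutions is a viscosity subsolution. To get smoothness we replace $\max$ by a regularized maximum $\max_\delta$, a smooth convex nondecreasing function of both arguments. This preserves $k$-convexity and $S_k\ge 1$, because $D^2\max_\delta(f,g)\ge \theta D^2f+(1-\theta)D^2g$ and $\Gamma_k$ is convex, together with the concavity of $S_k^{1/k}$ on $\Gamma_k$.

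\textbf{Main obstacle.} The main obstacle is arranging the crossing in Step 2 with constants independent of $\varepsilon$ and $R$. Concretely, we need $\underline u\le -M$ on the hole's vicinity and $\underline v<\underline u$ on an intermediate annulus around $x_0$ of fixed size. Since $\underline u(x)\approx -\alpha^{1/k}\int_{s}^{R_0}t^{-1/(2h_k)}dt$ becomes very negative as $\alpha\to\infty$ for $s<R_0/2$, while $\underline v$ can be made to satisfy $\underline v\ge \underline u$ at the hole (there $\underline v=0>\underline u$), the crossing can be arranged by choosing $\beta$ large. The subtle point is that the singularity of $\underline u$ at $0$ must not lie in $\bar\Omega^\varepsilon_R$. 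It does lie there, since $x_0\neq0$ and $\varepsilon$ is small, so $0\in\Omega_R^\varepsilon$.

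I would therefore instead center the outer subsolution so that it is smooth: either use the fact that $\max(\underline v,\underline u)=\underline v$ near $0$, which requires $\underline v>\underline u\to-\infty$ at $0$ and holds automatically, or use a smooth convex quadratic $\frac12x^TAx-M$ near $0$. The resulting $w$ is then smooth on $\bar\Omega^\varepsilon_R$, and Guan's theorem applies.
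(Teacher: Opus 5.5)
Your overall strategy matches the paper's: glue a smooth subsolution that vanishes on $\partial B_\varepsilon(x_0)$ with $\underline u$ through a regularized maximum, take $\alpha$ large, then apply Guan's theorem and the comparison principle. However, the gluing step, which you flag as the main obstacle, is never actually carried out, and several of your claims about it are wrong.

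\textbf{The sign of $\underline u$.} Your assertion that $\underline u<0$ on $\{s\le 3R_0\}$ is false. The function $\underline u=\int_{R_0}^{s}(1+\alpha t^{-k/(2h_k(a))})^{1/k}dt$ vanishes on $\{s=R_0\}$ and is positive beyond it. This positivity is exactly the mechanism you are missing. On the fixed collar $E_{3R_0}(0)\setminus E_{2R_0}(0)$ one has $\underline u\ge\int_{R_0}^{2R_0}(1+\alpha t^{-k/(2h_k(a))})^{1/k}dt\to+\infty$ as $\alpha\to\infty$. On $B_2(0)\setminus B_1(0)\subset\{s\le 2\lambda_{\max}(A)\}$ one has $\underline u\le-\int_{2\lambda_{\max}(A)}^{R_0}(1+\alpha t^{-k/(2h_k(a))})^{1/k}dt\to-\infty$. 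The inner piece is bounded on $E_{3R_0}(0)$ independently of $\alpha$ and $\varepsilon$, so the single parameter $\alpha$ produces both crossings, with a gap of at least $1$ on each collar.

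\textbf{Your proposed remedies.} None of them achieves this crossing:
\begin{itemize}
\item Enlarging $\beta$ raises $\underline v$ on the outer collar, which works against the crossing.
\item ``Subtracting a large constant from $\underline v$ away from the hole'' is not an operation that preserves the subsolution property.
\item Using $\max(\underline v,\underline u-C)$ destroys the boundary condition on $\partial E_R(0)$.
\end{itemize}

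\textbf{Why a global maximum fails.} A global maximum cannot equal $\underline u$ on $\partial E_R(0)$ for all large $R$. Both pieces grow quadratically, with different centers (and, for a radial inner piece, a different quadratic form). Hence the inner piece minus $\underline u$ is unbounded above along some direction; for your $\underline v$ this difference behaves like $-x_0^TAx+O(1)$.

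\textbf{What the paper does instead.} It glues only inside the fixed region $E_{3R_0}(0)\setminus B_1(0)$, using Guan's smooth maximum, which equals the larger function wherever the two differ by more than $1$. The subsolution is then defined to be the inner piece in $B_1(0)\setminus B_\varepsilon(x_0)$ and $\underline u$ outside $E_{3R_0}(0)$. This also removes the singularity of $\underline u$ at $0$ from consideration, and it makes $\alpha_0$ independent of $\varepsilon$ and $R$.

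\textbf{Step 1 also needs fixing.} Your first option fails: the shifted generalized symmetric function has ellipsoidal level sets $\{s_{x_0}=\mathrm{const}\}$. It is therefore not identically $0$ on $\partial B_\varepsilon(x_0)$ unless $A=a^*I$, whereas Guan's theorem needs a subsolution that equals the boundary data exactly. Use a radial piece instead. The simplest is the paper's $\underline u^\varepsilon=a^*(|x-x_0|^2-\varepsilon^2)$, which satisfies $S_k(D^2\underline u^\varepsilon)=2^k\ge1$.
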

\begin{proof}
	Let
	\begin{equation*}
		\underline{u}^{\varepsilon}(x)=a^*(|x-x_0|^2-\varepsilon^2),
	\end{equation*}
	where the constant $a^*$ satisfies $S_k(a^*I)=1$, and is given by
	$$a^*=(C_n^k)^{-\frac{1}{k}}\quad\text{and}\quad C_n^k=\frac{n!}{k!(n-k)!}.$$ 
	We claim that there exists a $k$-convex function $\underline{u}^{\varepsilon,R}\in C^\infty(\mathbb{R}^n\setminus B_\varepsilon(x_0 ))$ satisfying 
	\begin{equation}\label{eq:subsol}
		\begin{cases}
			S_k(D^2\underline{u}^{\varepsilon,R})\geq 1 & \text{in }\mathbb{R}^n\setminus B_\varepsilon(x_0), \\
			\underline{u}^{\varepsilon,R}=\underline{u}^{\varepsilon} & \text{in }B_2(0)\setminus B_\varepsilon(x_0), \\
			\underline{u}^{\varepsilon,R}=\underline{u} & \text{in }\mathbb{R}^n\setminus E_{2R_0}(0),
		\end{cases}
	\end{equation}
	and
	\begin{equation*}
		\underline{u}^{\varepsilon,R}\geq\max\{\underline{u}^\varepsilon, \underline u\}\quad\text{in }E_{2R_0}(0)\setminus B_2(0),
	\end{equation*}
	provided that $\alpha$ is suitably large. Indeed, by a direct calculation and \eqref{eq:sub-0}, $\underline{u}^{\varepsilon}$ and $\underline{u}$ are smooth $k$-convex subsolutions to the equation $S_k(D^2u)=1$ in $\mathbb{R}^n\setminus \{0\}$. Moreover,
$$\underline{u}^{\varepsilon}-\underline{u}\geq-\underline{u}(x)\geq\int_{2\lambda_{\text{max}}(A)}^{R_0}\bigg(1+\alpha t^{-\frac{k}{2h_k(a)}}\bigg)^\frac{1}{k}\mathrm{d}t\geq1\quad\text{in }B_2(0)\setminus B_1(0),$$
and
$$\underline{u}-\underline{u}^{\varepsilon} \geq \int_{R_0}^{2R_0}\bigg(1+\alpha t^{-\frac{k}{2h_k(a)}}\bigg)^\frac{1}{k}\mathrm{d}t-\max_{\bar{E}_{3R_0}(0)}\underline{u}^{\varepsilon  }\geq1\quad\text{in }E_{3R_0}(0)\setminus E_{2R_0}(0),$$
for $\alpha\geq\alpha_0$ where $\alpha_0>$ is a large constant depending only on $n$, $k$, $\lambda_{\text{max}}(A)$ and $R_0$, and satisfies $\mu({\alpha_0})\geq0$. With the aid of Lemma 3.2 in \cite{P.Guan-2002} $(\delta=1)$ (see also Lemma 2.8 in \cite{Ma-Zhang}), there exists a $k$-convex function $\underline{u}^{\varepsilon,R}\in C^\infty(E_{3R_0}(0)\setminus B_1(0))$ satisfying $\underline{u}^{\varepsilon,R}\geq\max\{\underline{u}^\varepsilon, \underline u\}$ in $E_{3R_0}(0)\setminus B_1(0)$ and 
\begin{equation*}
		\begin{cases}
			S_k(D^2\underline{u}^{\varepsilon,R})\geq 1 & \text{in }E_{3R_0}(0)\setminus B_1(0), \\
			\underline{u}^{\varepsilon,R}=\underline{u}^{\varepsilon} & \text{in }\{\underline{u}^{\varepsilon}
			-\underline{u}>1\}\supset B_2(0)\setminus B_1(0), \\
			\underline{u}^{\varepsilon,R}=\underline{u} & \text{in }\{\underline{u}-\underline{u}^{\varepsilon}>1\}\supset E_{3R_0}(0)\setminus E_{2R_0}(0),
		\end{cases}
\end{equation*}
where the first line is due to the concavity of $\log S_k$, precisely, for some $|t(x)|\leq1$,
\[
\begin{split}
	\log S_k(D^2\underline{u}^{\varepsilon,R})(x) & \geq\log S_k\bigg(\frac{1+t(x)}{2}D^2\underline{u}^{\varepsilon}+\frac{1-t(x)}{2}D^2\underline{u}\bigg) \\
	& \geq\frac{1+t(x)}{2}\log S_k(D^2\underline{u}^{\varepsilon})+\frac{1-t(x)}{2}\log S_k(D^2\underline{u}) \\
	& \geq0\quad\quad\text{for }x\in\{|\underline{u}^{\varepsilon}
	-\underline{u}|<1\}.
\end{split}
\]  
The claim is thus proved by setting 
$$\underline{u}^{\varepsilon,R}=\underline{u}^\varepsilon\text{ in }B_1(0)\setminus B_\varepsilon(x_0)\quad\text{and}\quad \underline{u}^{\varepsilon,R}=\underline{u}\text{ in }\mathbb{R}^n\setminus E_{3R_0}(0).$$

By virtue of \eqref{eq:subsol}, $\underline{u}^{\varepsilon,R}$ is a smooth $k$-convex  subsolution to \eqref{eq:u-e-R} in $\Omega_R^\varepsilon$ and satisfies the boundary data on $\partial \Omega_R^\varepsilon$. The existence of the solution $u^{\varepsilon,R}$ is guaranteed by Guan \cite{Guan-Duke} and its uniqueness follows from the comparison principle.
\end{proof}

In the remainder of this section, we will always assume $\alpha\geq\alpha_0$ with $\alpha_0$ given by Proposition \ref{prop:u-e-R-exist}.

\subsection{$C^0$ estimates of $u^{\varepsilon,R}$}\label{subsec:C0-est}
We first establish uniform $C^0$ bound for the solution $u^{\varepsilon,R}$ to \eqref{eq:u-e-R} by utilizing explicit sub- and super- solutions. 
Let for $x\in\mathbb{R}^n$,
\begin{equation}\label{eq:psi}
	\psi(x)=\frac{1}{2}x^TAx+\mu(\alpha).
\end{equation}
Notice that the expansion \eqref{eq:subsol-u-2} implies
$$\underline{u}(x)\leq\frac{1}{2}x^TAx+\mu(\alpha).$$
Clearly, the supersolution $\psi\in C^\infty(\bar{\Omega}_R^\varepsilon)$ satisfies that
\begin{equation}\label{eq:supersol}
	\begin{cases}
		S_k(D^2\psi)=1 & \text{in }\Omega_R^\varepsilon, \\
		\psi\geq0 & \text{on }\partial B_\varepsilon(x_0), \\
		\psi\geq u^{\varepsilon,R} & \text{on }\partial E_R(0).
	\end{cases}
\end{equation}
Let $\underline{u}^{\varepsilon,R}$ be the subsolution found in the proof of Proposition \ref{prop:u-e-R-exist}. By \eqref{eq:subsol} and \eqref{eq:supersol}, the comparison principle yields the following $C^0$ estimates.

\begin{lemma}\label{lem:C0-est}
	Let $u^{\varepsilon,R}\in C^\infty(\bar{\Omega}_R^\varepsilon)$ be the $k$-convex solution to \eqref{eq:u-e-R}. Then
	$$\underline{u}^{\varepsilon,R}\leq u^{\varepsilon,R}\leq \psi\quad\text{in }\Omega_R^\varepsilon.$$
\end{lemma}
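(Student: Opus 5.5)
The plan is to apply the comparison principle for $k$-convex functions on the bounded domain $\Omega_R^\varepsilon$ twice. The lower bound compares $u^{\varepsilon,R}$ with the subsolution $\underline{u}^{\varepsilon,R}$ from Proposition \ref{prop:u-e-R-exist}. The upper bound compares it with the explicit supersolution $\psi$ defined in \eqref{eq:psi}. Both comparisons are classical: if $v,w\in C^2(\Omega_R^\varepsilon)\cap C^0(\bar\Omega_R^\varepsilon)$ are $k$-convex, $S_k(D^2v)\geq S_k(D^2w)$ in $\Omega_R^\varepsilon$ and $v\leq w$ on $\partial\Omega_R^\varepsilon$, then $v\leq w$ in $\Omega_R^\varepsilon$.

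For the lower bound, \eqref{eq:subsol} gives $S_k(D^2\underline{u}^{\varepsilon,R})\geq1=S_k(D^2u^{\varepsilon,R})$ in $\Omega_R^\varepsilon$. The boundary values need to be checked on both components.
\begin{itemize}
\item On $\partial B_\varepsilon(x_0)$, which lies in $B_2(0)$ since $|x_0|<1/2$ and $\varepsilon<1/2$, we have $\underline{u}^{\varepsilon,R}=\underline{u}^\varepsilon=a^*(|x-x_0|^2-\varepsilon^2)=0=u^{\varepsilon,R}$.
\item On $\partial E_R(0)$ with $R>3R_0$, the point lies outside $E_{2R_0}(0)$, so $\underline{u}^{\varepsilon,R}=\underline{u}=u^{\varepsilon,R}$.
\end{itemize}
Thus the boundary data agree and comparison yields $\underline{u}^{\varepsilon,R}\leq u^{\varepsilon,R}$.

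For the upper bound, $D^2\psi=A$, so $\psi$ is $k$-convex with $S_k(D^2\psi)=S_k(A)=1$. The boundary checks are as follows.
\begin{itemize}
\item On $\partial B_\varepsilon(x_0)$ we have $\psi\geq\mu(\alpha)\geq\mu(\alpha_0)\geq0=u^{\varepsilon,R}$. This uses that $\mu$ is increasing and that $\mu(\alpha_0)\geq0$ was imposed in the choice of $\alpha_0$.
\item On $\partial E_R(0)$, the expansion \eqref{eq:subsol-u-2} shows $\underline{u}=\psi-\int_s^\infty\big((1+\alpha t^{-k/(2h_k(a))})^{1/k}-1\big)\,dt\leq\psi$, because the integrand is nonnegative and the integral converges by \eqref{eq:hk}. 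Hence $u^{\varepsilon,R}=\underline{u}\leq\psi$ there.
\end{itemize}
Comparison of $u^{\varepsilon,R}$ (as the subsolution) with $\psi$ then gives $u^{\varepsilon,R}\leq\psi$. This is exactly \eqref{eq:supersol}.

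No real obstacle is expected. The only points needing care are that $\partial B_\varepsilon(x_0)$ lies inside the region where $\underline{u}^{\varepsilon,R}=\underline{u}^\varepsilon$, and that $\mu(\alpha)\geq0$. Both were built into the construction in Proposition \ref{prop:u-e-R-exist}.
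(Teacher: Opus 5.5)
Your proposal is correct and follows the paper's argument: the paper obtains the lemma by applying the comparison principle on $\Omega_R^\varepsilon$ to the subsolution $\underline{u}^{\varepsilon,R}$ from \eqref{eq:subsol} and the supersolution $\psi$ from \eqref{eq:supersol}. You spell out the boundary checks the paper only calls ``clear'', namely $\partial B_\varepsilon(x_0)\subset B_2(0)$, $\mu(\alpha)\geq\mu(\alpha_0)\geq0$, and $\underline{u}\leq\psi$ via \eqref{eq:subsol-u-2}.
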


\subsection{$C^1$ estimates of $u^{\varepsilon,R}$}
By constructing barrier functions, we establish the following estimates for $|Du^{\varepsilon,R}|$ on the boundary $\partial\Omega_R^\varepsilon$, with precise dependence on $\varepsilon$ and $R$.

\begin{lemma}\label{lem:gra-bdy}
	Let $u^{\varepsilon,R}\in C^\infty(\bar{\Omega}_R^\varepsilon)$ be the $k$-convex solution to \eqref{eq:u-e-R}. Then 
	we have the estimates
	$$2a^*\varepsilon\leq|Du^{\varepsilon,R}|\leq C_\varepsilon\quad\text{on }\partial B_\varepsilon(x_0),$$
	and 
	$$
	\frac{1}{2}\sqrt{\lambda_{\mathrm{min}}(A)R}\leq|Du^{\varepsilon,R}|\leq C_R\quad\text{on }\partial E_R(0).$$
	Precisely, 
	\begin{equation*}
		C_\varepsilon = 
		\begin{cases}
			C\left( \frac{n}{k} - 2 \right) \varepsilon^{-1}, & \text{if } k < \frac{n}{2}, \\[10pt]
			C\varepsilon^{-1} |\log \varepsilon|^{-1}, & \text{if } k = \frac{n}{2}, \\[10pt]
			C\left( 2 - \frac{n}{k} \right) \varepsilon^{1-\frac{n}{k}}, & \text{if } k > \frac{n}{2},
		\end{cases}
	\end{equation*}
	and $C_R=C\sqrt{R}$, where $C$ is a  positive constant depending only on $n$, $k$, $\lambda_{\mathrm{max}}(A)$, $\lambda_{\mathrm{min}}(A)$, $R_0$ and $\alpha$ while independent of $\varepsilon$ and $R$. 
\end{lemma}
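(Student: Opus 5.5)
The plan is to reduce everything to comparing normal derivatives on the boundary. On each boundary component $u^{\varepsilon,R}$ is constant: it equals $0$ on $\partial B_\varepsilon(x_0)$, and on $\partial E_R(0)$ it equals $\underline u$, which is a function of $s$ alone and $\partial E_R(0)=\{s=R\}$. So the tangential derivatives vanish and $|Du^{\varepsilon,R}|=|\partial_\nu u^{\varepsilon,R}|$ there. Each bound then follows by trapping $u^{\varepsilon,R}$ between two functions that coincide with it on the relevant boundary component. The lower barrier comes from Lemma \ref{lem:C0-est}, and an upper barrier is a suitable explicit supersolution compared via the comparison principle.

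\emph{Inner boundary.} Near $\partial B_\varepsilon(x_0)$ we have $\underline u^{\varepsilon,R}=\underline u^\varepsilon=a^*(|x-x_0|^2-\varepsilon^2)$ by \eqref{eq:subsol}. Lemma \ref{lem:C0-est} gives $u^{\varepsilon,R}\ge \underline u^\varepsilon$, with equality on $\partial B_\varepsilon(x_0)$. Hence the derivative in the direction pointing away from $x_0$ satisfies $\partial_r u^{\varepsilon,R}\ge \partial_r\underline u^{\varepsilon}=2a^*\varepsilon$, which is the lower bound.

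For the upper bound, fix $\rho=1/4$, so that $B_\rho(x_0)\subset B_1(0)$ and $\rho>\varepsilon$ may be assumed. By Lemma \ref{lem:C0-est}, $u^{\varepsilon,R}\le\psi\le C_0$ on $\partial B_\rho(x_0)$, where $C_0$ is independent of $\varepsilon$ and $R$. With $r=|x-x_0|$, I take the radial $k$-Hessian fundamental-solution barrier
\[
w=M\big(\varepsilon^{2-\frac nk}-r^{2-\frac nk}\big)\ (k<\tfrac n2),\qquad w=M\log\tfrac r\varepsilon\ (k=\tfrac n2),\qquad w=M\big(r^{2-\frac nk}-\varepsilon^{2-\frac nk}\big)\ (k>\tfrac n2).
\]
Each of these is increasing in $r$, vanishes on $\partial B_\varepsilon(x_0)$, and satisfies $\lambda(D^2w)\in\bar\Gamma_k$ with $S_k(D^2w)=0\le1$. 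So $w$ is a supersolution in $B_\rho(x_0)\setminus \bar B_\varepsilon(x_0)$.

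Next choose $M$ so that $w\ge C_0$ on $r=\rho$. This gives $M\approx C_0\varepsilon^{\frac nk-2}$ when $k<n/2$, $M\approx C_0|\log\varepsilon|^{-1}$ when $k=n/2$, and $M\approx C$ when $k>n/2$. The comparison principle then yields $u^{\varepsilon,R}\le w$ in the annulus, with equality on $\partial B_\varepsilon(x_0)$, so $\partial_r u^{\varepsilon,R}\le w'(\varepsilon)$. Computing $w'(\varepsilon)$ reproduces exactly the three forms of $C_\varepsilon$.

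\emph{Outer boundary.} Since $R>3R_0$, near $\partial E_R(0)$ we have $\underline u^{\varepsilon,R}=\underline u\le u^{\varepsilon,R}$, with equality on $\partial E_R(0)$. Thus the outward normal derivative satisfies
\[
\partial_\nu u^{\varepsilon,R}\le \partial_\nu\underline u=\big(1+\alpha R^{-\frac{k}{2h_k(a)}}\big)^{\frac1k}|Ax|\le C\sqrt R,
\]
using $|Ax|\le \sqrt{\lambda_{\max}(A)}\sqrt{x^TAx}=\sqrt{2\lambda_{\max}(A)R}$ on $\partial E_R(0)$.

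For the lower bound I compare in the ring $E_R(0)\setminus \bar E_{R/4}(0)$, which avoids $B_\varepsilon(x_0)$ since $R/4>R_0$. Take the supersolution $w=\underline u|_{s=R}+\tfrac12(s-R)$; it satisfies $S_k(D^2w)=2^{-k}<1$ and is $k$-convex. On $\partial E_R(0)$ we have $w=u^{\varepsilon,R}$. On $\partial E_{R/4}(0)$, \eqref{eq:subsol-u-2} gives $w=R+\mu(\alpha)-o(1)-\tfrac38R$, while $u^{\varepsilon,R}\le\psi=\tfrac R4+\mu(\alpha)$. So $w\ge u^{\varepsilon,R}$ once $R$ (equivalently $R_0$) is large; the tail integral in \eqref{eq:subsol-u-2} is bounded uniformly for $s\ge R_0$. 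Comparison gives $u^{\varepsilon,R}\le w$ in the ring, with equality on $\partial E_R(0)$. Hence
\[
\partial_\nu u^{\varepsilon,R}\ge\tfrac12|Ax|\ge\tfrac12\sqrt{\lambda_{\min}(A)}\sqrt{2R}\ge\tfrac12\sqrt{\lambda_{\min}(A)R}.
\]

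\emph{Main obstacle.} The delicate step is the inner upper barrier. One has to justify comparison with a degenerate radial supersolution (with $S_k=0$, lying only in $\bar\Gamma_k$), which is admissible through the standard comparison principle for $k$-convex functions. One also has to track the constant $M$ carefully so that its $\varepsilon$-dependence yields exactly $C_\varepsilon$; in particular, for $k=n/2$ the relevant quantity is $\log(\rho/\varepsilon)\sim|\log\varepsilon|$.
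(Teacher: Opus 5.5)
You use the same barriers as the paper. At $\partial B_\varepsilon(x_0)$ these are $\underline u^\varepsilon$ and the radial $S_k=0$ function; your $w$ with $M\approx C_0\varepsilon^{n/k-2}$ is exactly the paper's $\bar u^\varepsilon$. At $\partial E_R(0)$ they are $\underline u$ and $\underline u|_{s=R}+\frac12(s-R)$, which is the paper's $\bar u^R$. The inner estimates and the outer upper bound are fine. The step you flag as the main obstacle is not delicate: at an interior positive maximum of $u^{\varepsilon,R}-w$ one has $D^2w\ge D^2u^{\varepsilon,R}$, which forces $S_k(D^2w)\ge1$.

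\textbf{Gap: outer lower bound.} The problem is that you localize to the ring $E_R(0)\setminus\bar E_{R/4}(0)$. On $\partial E_{R/4}(0)$ the only upper bound you have is $u^{\varepsilon,R}\le\psi=\frac R4+\mu(\alpha)$, and a direct computation gives
\[
w-\psi=\tfrac38R-\int_R^\infty\Big(\big(1+\alpha t^{-\frac{k}{2h_k(a)}}\big)^{\frac1k}-1\Big)\,\mathrm{d}t\quad\text{on }\partial E_{R/4}(0).
\]
This need not be nonnegative for $R>3R_0$. The parameter $\alpha\ge\alpha_0(R_0)$ is arbitrary, since it encodes $c=\mu(\alpha)$. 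For fixed $R$ the tail integral is at least $R\big((1+\alpha(2R)^{-\frac{k}{2h_k(a)}})^{\frac1k}-1\big)\to\infty$ as $\alpha\to\infty$.

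So your argument gives $|Du^{\varepsilon,R}|\ge\frac12\sqrt{\lambda_{\mathrm{min}}(A)R}$ only for $R\ge R_1(\alpha,R_0)$. The remark ``(equivalently $R_0$)'' does not rescue it, because $\alpha_0$ is chosen after $R_0$. The constant $\frac12$ in the statement is explicit, so the missing range of $R$ cannot be absorbed into $C$.

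\textbf{Fix.} Do what the paper does and compare on all of $\Omega_R^\varepsilon$. On $\partial B_\varepsilon(x_0)$ you know $u^{\varepsilon,R}=0$ exactly, instead of only the bound $\psi$ that carries $\mu(\alpha)$. Since the integrand defining $\underline u$ is at least $1$, $\underline u|_{s=R}\ge R-R_0$, so
\[
w\ge \underline u|_{s=R}-\tfrac R2\ge\tfrac R2-R_0>0 \quad\text{on }\partial B_\varepsilon(x_0).
\]
The comparison then holds for every $R>3R_0$ and every $\alpha\ge\alpha_0$.

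\textbf{Minor point: inner boundary.} With $\rho=\frac14$, the annulus $B_\rho(x_0)\setminus\bar B_\varepsilon(x_0)$ is empty for $\varepsilon\in[\frac14,\frac12)$, and your $M$ degenerates as $\varepsilon\to\rho$. So ``$\rho>\varepsilon$ may be assumed'' leaves part of the stated range uncovered. Compare instead on $B_1(x_0)$, or on $B_2(0)\setminus\bar B_\varepsilon(x_0)$ as the paper does. There $\psi$ is still bounded independently of $\varepsilon$ and $R$, and $\varepsilon/\rho$ stays bounded away from $1$, so all three forms of $C_\varepsilon$ follow as you computed.
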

\begin{proof}
	Denote $\nu$ the unit inward normal of $\Omega_R^\varepsilon$, i.e., $\nu$ points into $E_R(0)\setminus\bar{B}_\varepsilon(x_0)$. Since $u^{\varepsilon,R} = \underline{u}^{\varepsilon,R}$ on $\partial B_\varepsilon(x_0)$ and $u^{\varepsilon,R} \geq \underline{u}^{\varepsilon,R}$ in $\Omega_R^\varepsilon$,
	combining \eqref{eq:subsol}, we have
    \[
        \frac{\partial u^{\varepsilon,R}}{\partial \nu} \geq \frac{\partial \underline{u}^{\varepsilon,R}}{\partial \nu}=\frac{\partial \underline{u}^{\varepsilon}}{\partial \nu}=2a^*\varepsilon \quad \text{on } \partial B_\varepsilon(x_0).
    \]
	On the other hand, we define
    \begin{equation*}
        \bar{u}^\varepsilon(x) = 
        \begin{cases}
            C \bigg( 1 - \bigg( \dfrac{|x-x_0|}{\varepsilon} \bigg)^{2 - \frac{n}{k}} \bigg), & \text{if } k < \frac{n}{2}, \\[10pt]
            C \bigg( 1-\dfrac{\log |x-x_0|}{\log \varepsilon} \bigg), & \text{if } k = \frac{n}{2}, \\[10pt]
            C \bigg(|x-x_0|^{2-\frac{n}{k}}-\varepsilon^{2-\frac{n}{k}}\bigg), & \text{if } k > \frac{n}{2}.
        \end{cases}
    \end{equation*}
    It is straightforward to verify that in all three cases, $\bar{u}^\varepsilon$ are $k$-convex and satisfies $S_k(D^2 \bar{u}^\varepsilon) = 0$ and $\bar{u}^\varepsilon=0$ on $\partial B_\varepsilon(x_0)$. Take $C=C(n,k,\lambda_{\text{max}}(A),R_0,\alpha)$ large enough such that $\bar{u}^\varepsilon\geq2\lambda_{\text{max}}(A)+\mu(\alpha)\geq\psi\geq u^{\varepsilon,R}$ on $\partial B_2(0)$. The comparison principle implies $u^{\varepsilon,R}\leq\bar{u}^\varepsilon$ in $B_2(0)\setminus\bar{B}_\varepsilon(x_0)$. A direct calculation yields
	$$\frac{\partial u^{\varepsilon,R}}{\partial \nu}\leq\frac{\partial\bar{u}^\varepsilon}{\partial\nu} = C_\varepsilon \quad\text{on }\partial B_\varepsilon(x_0).$$
    From these, we obtain the desired gradient estimate on $\partial B_\varepsilon(x_0)$.

	Since $u^{\varepsilon,R}=\underline{u}^{\varepsilon,R}$ on $\partial E_R(0)$ and $u^{\varepsilon,R}\geq\underline{u}^{\varepsilon,R}$ in $\Omega_R^\varepsilon$, again by \eqref{eq:subsol}, we have
	\[
        \frac{\partial u^{\varepsilon,R}}{\partial \nu} \geq \frac{\partial \underline{u}^{\varepsilon,R}}{\partial \nu}=\frac{\partial \underline{u}}{\partial \nu}\geq-\bigg(1+\alpha R^{-\frac{k}{2h_k(a)}}\bigg)^{\frac{1}{k}}\sqrt{2\lambda_{\text{max}}(A)R}\geq-C\sqrt{R}   \quad \text{on } \partial E_R(0).
    \]
	Let 
	\[\bar{u}^R(x)=\frac{1}{2}\bigg(\frac{1}{2}x^TAx-R\bigg)+ \underline{u}|_{\partial E_R(0)}.\] 
	Then $\bar{u}^R$ is $k$-convex and satisfies $S_k(D^2\bar{u}^R)<1$, with $\bar{u}^R\geq0$ on $\partial B_\varepsilon(x_0)$ and $\bar{u}^R=u^{\varepsilon,R}$ on $\partial E_R(0)$. By the comparison principle, we get
	\[ 
	\frac{\partial u^{\varepsilon,R}}{\partial \nu}\leq\frac{\partial\bar{u}^R}{\partial\nu}\leq-\frac{1}{2}\sqrt{\lambda_{\text{min}}(A)R} \quad \text{on } \partial E_R (0).
	\]
	Hence, we obtain the desired   gradient estimate on $\partial E_R(0)$.
\end{proof}

A standard argument illustrates that the global gradient estimate can be inferred from the boundary gradient estimate.

\begin{lemma}\label{lem:C1_est}
    Let $u^{\varepsilon,R} \in C^\infty(\bar{\Omega}_R^\varepsilon)$ be the $k$-convex solution to \eqref{eq:u-e-R}. Then
	$$|Du^{\varepsilon,R}|\leq\max\{ C_\varepsilon, C_R\}\quad\text{in }\Omega_R^\varepsilon,$$
	where $C_\varepsilon$ and $C_R$ are given by Lemma \ref{lem:gra-bdy}.
\end{lemma}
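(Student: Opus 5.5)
The plan is to show that $|Du^{\varepsilon,R}|$ attains its maximum over $\bar{\Omega}_R^\varepsilon$ on the boundary $\partial\Omega_R^\varepsilon$, and then to quote Lemma \ref{lem:gra-bdy}. We write $u=u^{\varepsilon,R}$. The standard device is to differentiate the equation. Since $S_k(D^2u)=1$ and $u$ is smooth up to the boundary by Proposition \ref{prop:u-e-R-exist}, we differentiate in a fixed direction $x_l$ and obtain
\[
S_k^{ij}(D^2u)\,(u_l)_{ij}=0\quad\text{in }\Omega_R^\varepsilon,\qquad l=1,\dots,n.
\]
Because $u$ is $k$-convex, i.e.\ $\lambda(D^2u)\in\Gamma_k$, the matrix $(S_k^{ij}(D^2u))$ is positive definite. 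Hence $L:=S_k^{ij}\partial_{ij}$ is a linear elliptic operator with smooth coefficients on $\bar{\Omega}_R^\varepsilon$, and each $u_l$ is $L$-harmonic.

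Next we pass to $|Du|^2$ and compute
\[
L(|Du|^2)=2\sum_l u_l\,L(u_l)+2\sum_l S_k^{ij}u_{li}u_{lj}=2\,\mathrm{tr}\big(S_k^{ij}\,(D^2u)^2\big)\geq0.
\]
The inequality holds because $(S_k^{ij})$ is positive definite and $(D^2u)^2$ is positive semidefinite. Thus $|Du|^2$ is a subsolution of $L$. The weak maximum principle on the bounded domain $\Omega_R^\varepsilon$ then gives
\[
\max_{\bar{\Omega}_R^\varepsilon}|Du|\le\max_{\partial\Omega_R^\varepsilon}|Du|.
\]
Alternatively, one can avoid squaring: for any unit vector $\xi$, $D_\xi u$ satisfies $L(D_\xi u)=0$, so $D_\xi u\le\max_{\partial\Omega_R^\varepsilon}|Du|$, and taking the supremum over $\xi$ gives the same bound.

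The boundary $\partial\Omega_R^\varepsilon$ is the disjoint union $\partial B_\varepsilon(x_0)\cup\partial E_R(0)$. Lemma \ref{lem:gra-bdy} bounds $|Du|$ by $C_\varepsilon$ on the first component and by $C_R$ on the second. This yields $|Du^{\varepsilon,R}|\le\max\{C_\varepsilon,C_R\}$ throughout $\Omega_R^\varepsilon$.

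The only point needing care is the ellipticity of $L$, which requires $\lambda(D^2u)\in\Gamma_k$ at every point of the closed domain. This is guaranteed because the solution provided by \cite{Guan-Duke} is admissible and smooth up to the boundary, so no real obstacle is expected.
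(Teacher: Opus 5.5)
Your proposal is correct and matches the paper's proof: you differentiate the equation to get $\sum_{i,j}S_k^{ij}u_{ijl}=0$ and deduce $\sum_{i,j}S_k^{ij}(|Du|^2)_{ij}=2\sum_{i,j,l}S_k^{ij}u_{li}u_{lj}\ge 0$. You then apply the maximum principle and invoke Lemma~\ref{lem:gra-bdy} on $\partial B_\varepsilon(x_0)$ and $\partial E_R(0)$, exactly as the paper does. Your directional-derivative variant is an equally valid minor alternative.
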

\begin{proof}
	Differentiating the equation in \eqref{eq:u-e-R} with respect to $x_l$ gives
	$$\sum_{i,j}S_k^{ij} u^{\varepsilon,R}_{ijl} = 0.$$
	By a direct calculation and the ellipticity of $S_k^{ij}$, we have
	\begin{equation*}
		\begin{split}
			\sum_{i,j}S_k^{ij} (|Du^{\varepsilon,R}|^2)_{ij} & = 2\sum_{i,j}S_k^{ij} u^{\varepsilon,R}_l u^{\varepsilon,R}_{lij} + 2\sum_{i,j}S_k^{ij} u^{\varepsilon,R}_{li} u^{\varepsilon,R}_{lj} \\
			& = 2\sum_{i,j}S_k^{ij} u^{\varepsilon,R}_{li} u^{\varepsilon,R}_{lj} \geq 0.
		\end{split}
	\end{equation*}
	By virtue of the maximum principle, 
    \begin{equation}\label{eq:mp}
        \max_{\bar{\Omega}_R^\varepsilon} |D u^{\varepsilon,R}| = \max_{\partial\Omega_R^\varepsilon} |D u^{\varepsilon,R}|.
    \end{equation}
	The conclusion follows from Lemma \ref{lem:gra-bdy}.
\end{proof}
 
We proceed to establish the gradient estimates near and away the boundary $\partial\Omega_R^\varepsilon$ that will be used in the next subsection and section.
\begin{lemma}\label{lem:near-bdy}
Let $u^{\varepsilon,R} \in C^\infty(\bar{\Omega}_R^\varepsilon)$ be the $k$-convex solution to \eqref{eq:u-e-R}. Then we have the estimates 
\begin{alignat}{2}
	|Du^{\varepsilon, R}| &\le C_\varepsilon \quad && \text{in } B_2(0) \setminus B_\varepsilon(x_0), \label{eq:C1-1}\\
	|Du^{\varepsilon, R}| &\le C_R           \quad && \text{in } E_R(0) \setminus E_{\frac{2}{3}R}(0), \label{eq:C1-2}
\end{alignat}
	and for given small constant $\delta>0$ and any $0<\varepsilon<\delta$,
\begin{alignat}{2}
	|Du^{\varepsilon, R}| &\le C             \quad && \text{in } B_{2}(0) \setminus B_\delta(x_0), \label{eq:C1-3}
\end{alignat}
where $C_\varepsilon$ and $C_R$ are given by Lemma \ref{lem:gra-bdy}, and $C$ is a positive constant depending only on $n$, $k$, $\lambda_{\mathrm{max}}(A)$, $\lambda_{\mathrm{min}}(A)$, $\alpha$ and $\delta$ while independent of $\varepsilon$ and $R$.
\end{lemma}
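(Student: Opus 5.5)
The plan is to apply the maximum principle for $|Du^{\varepsilon,R}|^2$, already used in the proof of Lemma \ref{lem:C1_est}, on suitable subdomains rather than on all of $\Omega_R^\varepsilon$. In each case we bound $|Du^{\varepsilon,R}|$ on the new (interior) boundary, using convexity-type information in the radial direction together with the $C^0$ bounds of Lemma \ref{lem:C0-est}. Since $\sum_{i,j}S_k^{ij}(|Du^{\varepsilon,R}|^2)_{ij}\ge0$ on any subdomain, the maximum of $|Du|$ over a subdomain is attained on its boundary. The three estimates therefore reduce to boundary control.

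For \eqref{eq:C1-1}, we do not want to apply the maximum principle on $B_2(0)\setminus B_\varepsilon(x_0)$ directly, since then $\partial B_2(0)$ would need control. Instead we use a localized gradient bound. The key tool is an interior gradient estimate for $k$-convex functions: by the $C^0$ bounds, $\underline{u}^{\varepsilon,R}\le u^{\varepsilon,R}\le\psi$, so on $E_{3R_0}(0)\setminus B_\delta(x_0)$ the oscillation of $u$ is bounded by a constant independent of $\varepsilon,R$. Since $k$-convex functions are subharmonic, and more precisely satisfy the interior gradient estimates of Trudinger--Wang (or the Chou--Wang estimate $|Du(x)|\le C\,\mathrm{osc}\,u/\mathrm{dist}$), we get $|Du|\le C$ on $\partial B_2(0)$ and indeed on $B_2(0)\setminus B_\delta(x_0)$, which is \eqref{eq:C1-3}.

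If only subharmonicity were available, I would instead use a Bernstein-type interior estimate: take $\phi=\eta^2|Du|^2$ with a cutoff $\eta$, and use the equation's concavity together with $\sum S_k^{ij}u_{li}u_{lj}\ge c\,|D^2u|$-type terms to absorb cutoff errors. This works for $S_k=1$ with bounded $u$; it is the standard $k$-Hessian interior gradient estimate. With the bound $|Du|\le C$ on $\partial B_2(0)$ and on $\partial B_\delta(x_0)$ in hand, \eqref{eq:C1-1} follows by applying the maximum principle on $B_2(0)\setminus \bar B_\varepsilon(x_0)$. On that domain the boundary values are at most $\max\{C_\varepsilon,C\}\le C_\varepsilon$, using Lemma \ref{lem:gra-bdy}, after enlarging the constant in $C_\varepsilon$ (note $C_\varepsilon\gtrsim 1$ for small $\varepsilon$).

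For \eqref{eq:C1-2}, apply the maximum principle on $E_R(0)\setminus \bar E_{\frac23R}(0)$. The outer boundary gives $C_R$ by Lemma \ref{lem:gra-bdy}. For the inner boundary $\partial E_{\frac23R}(0)$, use a scaled interior estimate. On the ellipsoidal shell $E_{\frac34R}\setminus E_{\frac12R}$, rescale $v(y)=R^{-1}u(\sqrt R\,y)$, which still satisfies $S_k(D^2v)=1$. By Lemma \ref{lem:C0-est} and \eqref{eq:subsol-u-2}, $v$ is bounded uniformly on the fixed annular region (since $\underline u,\psi=O(R)$ there). The interior gradient estimate then gives $|Dv|\le C$, i.e.\ $|Du|\le C\sqrt R=C_R$ on $\partial E_{\frac23R}(0)$.

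The main obstacle is the interior gradient estimate for $k$-convex solutions with constants depending only on the oscillation and the distance. I would invoke the known estimate (Chou--Wang / Trudinger) rather than reprove it. If needed, I would fall back on the Bernstein cutoff argument sketched above, checking that the constants depend only on $n,k,\lambda_{\max}(A),\lambda_{\min}(A),\alpha,\delta$.
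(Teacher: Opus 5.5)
Your proposal is correct and follows essentially the paper's argument. Both apply the maximum principle for $|Du^{\varepsilon,R}|^2$ on $B_2(0)\setminus\bar B_\varepsilon(x_0)$ and on $E_R(0)\setminus\bar E_{\frac{2}{3}R}(0)$, and both control the new interior boundaries with the Chou--Wang interior gradient estimate (rescaled by $\sqrt{R}$ near $\partial E_{\frac{2}{3}R}(0)$) together with the $C^0$ bounds $\underline{u}^{\varepsilon,R}\le u^{\varepsilon,R}\le\psi$. The only difference is cosmetic: for the estimate on $B_2(0)\setminus B_\delta(x_0)$ you apply the interior estimate pointwise, whereas the paper applies it on $\partial B_\delta(x_0)$ and then uses the maximum principle; in either version one tacitly needs $\delta-\varepsilon$ bounded below (e.g.\ $\varepsilon<\delta/2$) for the constant to be uniform.
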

\begin{proof}
	Analogously to \eqref{eq:mp}, we have
    \begin{equation*}
    \begin{aligned}
        \sup_{B_2(0) \setminus B_\varepsilon(x_0)} |Du^{\varepsilon,R}| 
        &= \max \Big\{ \sup_{\partial B_\varepsilon(x_0)} |Du^{\varepsilon,R}|, \sup_{\partial B_2(0)} |Du^{\varepsilon,R}| \Big\}.
    \end{aligned}
    \end{equation*}
	For the term on $\partial B_{2}(0)$, applying the interior gradient estimate in \cite{Chou-Wang}, we obtain
    \begin{equation}\label{eq:gra-pB2}
        \sup_{\partial B_2(0)} |Du^{\varepsilon,R}| \leq C\operatorname*{osc}_{B_3(0) \setminus B_1(0)} u^{\varepsilon,R} \leq C \sup_{B_3(0) \setminus B_1(0)} \psi \leq C.
    \end{equation}
    Combining with Lemma \ref{lem:gra-bdy}, we obtain \eqref{eq:C1-1}. By an argument similar to \eqref{eq:gra-pB2}, we get $\sup\limits_{\partial B_\delta(x_0)}|Du^{\varepsilon,R}|\leq C$ and thus \eqref{eq:C1-3} follows.

	Also, we have
	\begin{equation*} 
    \begin{aligned}
        \sup_{E_R(0) \setminus E_{\frac{2}{3}R}(0)} |Du^{\varepsilon,R}| 
        &= \max \Big\{ \sup_{\partial E_R(0)} |Du^{\varepsilon,R}|, \sup_{\partial E_{\frac{2}{3}R}(0)} |Du^{\varepsilon,R}| \Big\}.
    \end{aligned}
    \end{equation*}
    On $\partial E_{\frac{2}{3}R}(0)$, the interior gradient estimate in \cite{Chou-Wang} implies
    \begin{equation*}
        \sup_{\partial E_{\frac{2}{3}R}(0)} |Du^{\varepsilon,R}| \leq \frac{C}{\sqrt{R}} \operatorname*{osc}_{E_R(0) \setminus E_{\frac{R}{3}}(0)} u^{\varepsilon,R} \leq \frac{C}{\sqrt R} \sup_{E_R(0) \setminus E_{\frac{R}{3}}(0)}\psi \leq C\sqrt{R}.
    \end{equation*}
    Again combining with Lemma \ref{lem:gra-bdy}, we obtain \eqref{eq:C1-2}.
\end{proof}
\begin{remark}\label{rmk:Du-est}
	$B_2(0)$ in \eqref{eq:C1-1} and \eqref{eq:C1-3} can be replaced by any given large ball centered at the origin.
\end{remark} 

\subsection{$C^2$ estimates of $u^{\varepsilon,R}$} We derive the following estimates for $|D^2u^{\varepsilon,R}|$ on the boundary $\partial\Omega_R^\varepsilon$, by sequentially bounding the double tangential, tangential-normal, and double normal derivatives, with precise dependence on $\varepsilon$ and $R$.
\begin{lemma}\label{lem:C2-est}
	Let $u^{\varepsilon,R}\in C^\infty(\bar{\Omega}_R^\varepsilon)$ be the $k$-convex solution to \eqref{eq:u-e-R}. Then we have the estimates 
\begin{equation*}
	|D^2u^{\varepsilon,R}|\leq C\cdot C_\varepsilon \varepsilon^{-1}\quad\text{on }\partial B_\varepsilon(x_0),
\end{equation*}
and 
\begin{equation*}
	|D^2u^{\varepsilon,R}|\leq C\quad\text{on }\partial E _R(0).
\end{equation*}
	where $C$ is a positive constant depending only on $n$, $k$, $\lambda_{\mathrm{max}}(A)$, $\lambda_{\mathrm{min}}(A)$, $R_0$ and $\alpha$ while independent of $\varepsilon$ and $R$, and $C_\varepsilon$ is as in Lemma \ref{lem:gra-bdy}.
\end{lemma}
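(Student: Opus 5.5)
The plan is a scaling argument that reduces both boundaries to unit-size problems, followed by the classical Caffarelli--Nirenberg--Spruck/Guan boundary $C^2$ scheme. On $\partial B_\varepsilon(x_0)$ I rescale by $v(y)=\varepsilon^{-1}C_\varepsilon^{-1}u^{\varepsilon,R}(x_0+\varepsilon y)$ on $B_{2/\varepsilon}(0)\setminus \bar B_1(0)$, or on a fixed annulus $B_3\setminus \bar B_1$ if I first localize. By Lemma~\ref{lem:near-bdy} and Lemma~\ref{lem:gra-bdy}, $|Dv|\le 1$ near $\partial B_1$, $v=0$ on $\partial B_1$, and $S_k(D^2v)=\varepsilon^{k}C_\varepsilon^{-k}$. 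The right-hand side is bounded, and for $k\le n/2$ it is small. The task is then to bound $|D^2v|\le C$ on $\partial B_1$ uniformly; scaling back gives $|D^2u^{\varepsilon,R}|\le C\,C_\varepsilon\varepsilon^{-1}$.

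On $\partial E_R(0)$ I use the parabolic-type rescaling $w(y)=R^{-1}u^{\varepsilon,R}(\sqrt R\,y)$ on $E_1\setminus E_{2/3}$. Here $S_k(D^2w)=1$, $w=R^{-1}\underline u(\sqrt R y)$ on $\partial E_1$, which is a smooth function with $R$-independent bounds since $\underline u$ is a function of $s$ with controlled derivatives, and $|Dw|\le C$ by \eqref{eq:C1-2}. A bound $|D^2w|\le C$ on $\partial E_1$ translates exactly to $|D^2u^{\varepsilon,R}|\le C$.

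In both rescaled problems the three steps are the standard ones. First, the double tangential derivatives follow directly from the boundary data: since the boundary value is constant on the sphere, or $\underline u$ on the ellipsoid, $u_{\tau\tau}=\text{(data)}_{\tau\tau}-u_\nu\kappa_\tau$, which is bounded by the gradient bound and the curvature of the rescaled boundary, both of order one. Second, for the mixed derivatives I apply the linearized operator $L=S_k^{ij}\partial_{ij}$ to the angular derivative $T=\partial_\tau$ along rotations of $\partial B_1$ (for the ellipsoid, the corresponding $A$-rotations $x_ia_j x_j$-type fields, which preserve $E_1$). Then $LTu$ is bounded by $C\sum S_k^{ii}$, and I compare with a barrier $\Psi=A_1(\underline v-v)+A_2 d-A_3 d^2$ built from the subsolution, namely $\underline u^\varepsilon$ or $\underline u$ rescaled, and the distance $d$. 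This gives $|u_{\tau\nu}|\le C$.

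Third, the double normal derivative: I expand $S_k(D^2u)=u_{\nu\nu}S_{k-1}(D'^2u)+O(1)$ and need a positive lower bound on $S_{k-1}$ of the tangential block, that is, of the principal curvatures of $\partial\Omega$ weighted by $u_\nu$. On the sphere, $u_\nu\ge 2a^*\varepsilon$ from Lemma~\ref{lem:gra-bdy} becomes $v_\nu\ge c\,\varepsilon^2/(\varepsilon C_\varepsilon)$ after rescaling. This is small when $k<n/2$, which is the main obstacle, because the rescaled tangential Hessian $v_\nu\,\delta_{\alpha\beta}$ degenerates. To get around this I would avoid the global rescaling for the normal direction. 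Instead I would work directly with $u$: on $\partial B_\varepsilon$, $D'^2u=u_\nu\varepsilon^{-1}I$ with $u_\nu\ge 2a^*\varepsilon$, so $S_{k-1}(D'^2u)\ge c>0$. Then $u_{\nu\nu}\le (1+C(\text{mixed})^2)/c$, where the mixed terms are of order $C_\varepsilon\varepsilon^{-1}$ and enter quadratically divided by the tangential entries $u_\nu/\varepsilon\gtrsim C_\varepsilon\varepsilon^{-1}\cdot(\text{ratio})$.

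If this naive count loses powers, I would use Trudinger's trick of minimizing $u_\nu$-weighted $S_{k-1}$ over the boundary and use the constant-boundary-value structure, which gives $D'^2u=u_\nu\varepsilon^{-1}I$ exactly, to get the sharp order $C_\varepsilon\varepsilon^{-1}$. On the ellipsoid no degeneracy occurs: $w_\nu\ge c$ by the lower bound $\frac12\sqrt{\lambda_{\min}R}$, and $\partial E_1$ is uniformly convex. The upper bound then follows, and the lower bound on $u_{\nu\nu}$ comes from $k$-convexity, $\Delta u>0$.
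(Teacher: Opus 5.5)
\textbf{There is a genuine gap at $\partial B_\varepsilon(x_0)$, in the mixed and double-normal steps.}

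Your normalization $v=\varepsilon^{-1}C_\varepsilon^{-1}u^{\varepsilon,R}(x_0+\varepsilon y)$ is not harmless. Besides $S_k(D^2v)=\varepsilon^kC_\varepsilon^{-k}\to0$, it turns the subsolution into $\underline v=a^*\varepsilon C_\varepsilon^{-1}(|y|^2-1)$. So the strictness constant in the Guan-type barrier, $L(v-\underline v)\le-\tau_0\sum_iS_k^{ii}$, is only $\tau_0\sim\varepsilon C_\varepsilon^{-1}$. Run on a fixed rescaled annulus, that argument then forces $A_1\sim C_\varepsilon\varepsilon^{-1}$. It gives $|v_{\tau\nu}|\lesssim C_\varepsilon\varepsilon^{-1}$, not $O(1)$, so your claimed uniform bound is not justified.

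Even granting what you claim, $|u_{\tau\nu}|\le C\,C_\varepsilon\varepsilon^{-1}$, step 3 does not close. Dividing the expansion by $(u_\nu/\varepsilon)^{k-1}$ gives
\[
u_{\nu\nu}\le C\Big(\frac{\varepsilon}{u_\nu}\Big)^{k-1}+C\,\frac{\varepsilon}{u_\nu}\sum_{\beta}u_{\beta\nu}^2 .
\]
At a point where $u_\nu$ is near its lower bound $2a^*\varepsilon$, this yields only $O(C_\varepsilon^2\varepsilon^{-2})$, i.e.\ $O(\varepsilon^{-4})$ for $k<n/2$, instead of $O(C_\varepsilon\varepsilon^{-1})$. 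Trudinger's trick does not address this. Here $S_{k-1}(D'^2u)=C_{n-1}^{k-1}(u_\nu/\varepsilon)^{k-1}$ is explicit and already bounded below; the obstruction is the quadratic mixed term.

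\textbf{The missing idea} is a mixed bound proportional, at the same point, to $u_\nu(p)$:
\[
|u_{\alpha\nu}(p)|\le C\,u_\nu(p)\,\varepsilon^{-1}.
\]
The paper gets this in the original variables (equivalently, under the equation-preserving scaling $\varepsilon^{-2}u(x_0+\varepsilon y)$). It uses $T_\alpha=(x-x_0)_\alpha\partial_n-(x-x_0)_n\partial_\alpha$, which satisfies $L(T_\alpha u)=0$ and vanishes on $\partial B_\varepsilon(x_0)$. It compares with the barrier $h=A_1(u-\underline u^\varepsilon)+A_2|x-p|^2$ on the \emph{unit-size} region $B_1(p)\setminus B_\varepsilon(x_0)$.

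The constants are $\varepsilon$-independent for two reasons:
\begin{itemize}
\item $|T_\alpha u|\le C$ on $\partial B_1(p)$ by \eqref{eq:C1-3};
\item $D^2\underline u^\varepsilon=2a^*I$, so $\underline u^\varepsilon-\tau_0|x|^2$ stays a subsolution for a fixed $\tau_0$.
\end{itemize}
Then $\varepsilon|u_{\alpha n}(p)|=|(T_\alpha u)_n(p)|\le A_1\,\partial_\nu(u-\underline u^\varepsilon)(p)\le A_1u_\nu(p)$, because $\partial_\nu\underline u^\varepsilon=2a^*\varepsilon\le u_\nu$. Plugged into the display, this gives $u_{\nu\nu}\le C+Cu_\nu/\varepsilon\le C\,C_\varepsilon\varepsilon^{-1}$. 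Together with $\Delta u>0$ for the lower bound, that completes the $\partial B_\varepsilon(x_0)$ estimate.

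\textbf{A smaller issue on $\partial E_R(0)$.} Your scaling $w=R^{-1}u(\sqrt R y)$ is fine, since it preserves $S_k=1$. However, the fields $a_jx_j\partial_i-a_ix_i\partial_j$ are not isometries. One computes $L(Tu)=2(a_j-a_i)\sum_qS_k^{iq}u_{qj}$, which is not bounded by $C\sum_iS_k^{ii}$ without an a priori Hessian bound. Use Euclidean rotations instead, and absorb the $O(|\tilde x'|^2)$ boundary error into the barrier, as the paper does.
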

\begin{proof} 
	In the proof, we drop the superscripts $\varepsilon$ and $R$ and write $u^{\varepsilon,R}$ as $u$ for simplicity.

\textit{Step 1. Double tangential derivatives on $\partial\Omega_R^\varepsilon$.} For any fixed point $p\in\partial\Omega_R^\varepsilon$, we choose the local coordinate $\{\tilde{x}_1,\cdots,\tilde{x}_n\}$ in a neighborhood of $p$ such that $p$ is the origin and the positive $\tilde{x}_n$-axis is the inner normal of $\partial\Omega_R^\varepsilon$ (pointing into $\Omega_R^\varepsilon$). Then
$\partial  \Omega_R^\varepsilon$ can be locally represented by $\tilde{x}_n=\rho(\tilde{x}')$, $\rho(\tilde{0}')=0$ and $D'\rho(\tilde{0}')=0'$, where $\tilde{x}'=(\tilde x_1,\cdots,\tilde x_{n-1})$. Differentiating the boundary data $u(\tilde{x}',\rho(\tilde{x}'))\equiv\text{const}$ once and twice yields for $1\leq \alpha,\ \beta\leq n-1$,
\begin{equation}\label{eq:tan-d1}
0 = u_{\alpha}(\tilde{x}', \rho(\tilde{x}')) + u_n(\tilde{x}', \rho(\tilde{x}')) \rho_\alpha(\tilde{x}'),
\end{equation}
and
\[
\begin{aligned}
0 &= u_{\alpha\beta}(\tilde 0) + u_{\alpha n}(\tilde 0) \rho_\beta(\tilde 0') + u_{\beta n}(\tilde 0) \rho_\alpha(\tilde 0') + u_{nn}(\tilde 0) \rho_\alpha(\tilde 0') \rho_\beta(\tilde{0}') + u_n(\tilde 0) \rho_{\alpha\beta}(\tilde{0}') \\
&= u_{\alpha\beta}(\tilde 0) + u_n(\tilde 0)\rho_{\alpha\beta}(\tilde 0').
\end{aligned}
\]
Hence,  
\begin{equation}\label{eq:TT-I}
	u_{\alpha\beta}(\tilde 0) = -u_n(\tilde 0) \rho_{\alpha\beta}(\tilde 0').
\end{equation}
For the boundary \(\partial B_\varepsilon(x_0)\), we have \(\rho(\tilde{x}') = \sqrt{\varepsilon^2 - |\tilde{x}'|^2}-\varepsilon \) and
\[
\rho_{\alpha\beta}(\tilde{x}') = -\frac{\delta_{\alpha\beta}}{\sqrt{\varepsilon^2 - |\tilde{x}'|^2}} -\frac{\tilde x_\alpha \tilde x_\beta}{(\varepsilon^2 - |\tilde{x}'|^2)^{\frac{3}{2}}},
\]
and so 
\begin{equation}\label{eq:rho-11}
	\rho_{\alpha\beta}(\tilde 0') = -\frac{\delta_{\alpha\beta}}{\varepsilon}.
\end{equation}
Combining with Lemma \ref{lem:gra-bdy}, we obtain
\begin{equation*}
|u_{\alpha\beta}(\tilde 0)| = |u_n(\tilde 0)| \cdot |\rho_{\alpha\beta}(\tilde 0')| \leq C_\varepsilon\varepsilon^{-1} \delta_{\alpha\beta}.
\end{equation*}
For the boundary $\partial E_R(0)$, we have
\begin{equation}\label{eq:rho}
	\rho(\tilde{x}')=\frac{1}{2\sqrt{R}}\sum_{\alpha=1}^{n-1}\kappa_\alpha \tilde x_\alpha^2+\frac{1}{R}O(|\tilde{x}'|^3)=\frac{1}{\sqrt R}O(|\tilde x'|^2),
\end{equation}
where $\kappa_1,\cdots,\kappa_{n-1}$ are principal curvatures of $\partial E_1(0)$ at the point $\frac{p}{\sqrt R}$, and so $\frac{\kappa_1}{\sqrt R},\cdots,\frac{\kappa_{n-1}}{\sqrt R}$ are principal curvatures of $\partial E_R(0)$ at $p$.  
Direct computation gives 
\[\frac{\lambda_{\text{min}}(A)}{\sqrt{2\lambda_{\text{max}}(A)R}}\delta_{\alpha\beta}\leq\rho_{\alpha\beta}(\tilde 0') =\frac{\kappa_\alpha}{\sqrt R}\delta_{\alpha\beta} \leq \frac{\lambda_{\text{max}}(A)}{\sqrt{2\lambda_{\text{min}}(A)R}}\delta_{\alpha\beta}.\] 
Using Lemma \ref{lem:gra-bdy} again, we obtain
\[
|u_{\alpha\beta}(\tilde 0)| = |u_n(\tilde 0)| \cdot |\rho_{\alpha\beta}(\tilde 0')|\leq\frac{\lambda_{\text{max}}(A)C_R}{\sqrt{2\lambda_{\text{min}}(A)R}}\delta_{\alpha\beta}\leq C.
\]

\textit{Step 2. Tangential-normal derivatives on $\partial\Omega_R^\varepsilon$.} If $p\in \partial B_\varepsilon(x_0)$, we choose the coordinate such that $p=(x_{0,1},\cdots,x_{0,n-1},x_{0,n}-\varepsilon)$ and consider the angular derivative
	\[
	T_{\alpha} = (x_{\alpha} - x_{0,\alpha})\partial_n - (x_n - x_{0,n})\partial_{\alpha}.
	\]
	In virtue of $S_k(D^2u)=1$, we have
	\[
	L(T_\alpha(u)):=\sum_{i,j} S_k^{ij}(T_{\alpha}(u))_{ij} = 0.
	\]
	It follows from $u\equiv\mathrm{const}$ on $\partial B_\varepsilon(x_0)$ that
	\[
	T_\alpha(u) = (x_\alpha - x_{0,\alpha}) \frac{\partial u}{\partial \nu} \cdot \frac{(x-x_0)_n}{\varepsilon} - (x_n - x_{0,n}) \frac{\partial u}{\partial \nu} \cdot \frac{(x-x_0)_\alpha}{\varepsilon} = 0\quad\text{on }\partial B_\varepsilon(x_0).
	\]
	By \eqref{eq:C1-3}, we find
	\[ |T_{\alpha}(u)| \leq |u_n| + (1+\varepsilon)|u_{\alpha}| \leq 3|Du| \leq C\quad\text{on }\partial B_1(p). \]
	Consider the upper barrier of the form 
	\[ 
	h(x) = A_1(u - \underline{u}^{\varepsilon,R}) + A_2|x - p|^2,
	\]
	where $A_1$ and $A_2$ are positive constants that will be chosen later. Then 
	\[h\geq0=T_\alpha(u)\quad\text{on }\partial B_\varepsilon(x_0),\]
	\[h\geq A_2\geq|T_\alpha(u)|\quad\text{on }\partial B_1(p),\]
	provided that $A_2$ is sufficiently large.
	Recall from \eqref{eq:subsol} that $\underline{u}^{\varepsilon,R}=\underline{u}^{\varepsilon}$ is strictly $k$-convex and $S_k(D^2\underline{u}^\varepsilon)>1$ in $B_2(0)\setminus B_\varepsilon(x_0)$. Then there exists $\tau_0$ sufficiently small such that $\hat{u}(x):=\underline{u}^{\varepsilon}(x)-\tau_0|x|^2$ is $k$-convex and $S_k(D^2\hat{u})\geq1$ in $B_1(p)\setminus B_\varepsilon(x_0)$. By the concavity of $S_k^{1/k}$ in the cone $\Gamma_k$, we have $L(u-\hat{u})\leq0$. A direct computation gives in $B_1(p)\setminus B_\varepsilon(x_0)$,
	\[
	L(u - \underline{u}^{\varepsilon}) = L(u - \hat{u} - \tau_0|x|^2) = L(u - \hat{u}) - 2\tau_0\sum_i S_k^{ii} \leq -2\tau_0\sum_i S_k^{ii},
	\]
	then taking $A_1 = A_2/\tau_0$ yields
  	\[
	L(h \pm T_{\alpha}(u)) = A_1 L(u - \underline{u}^{\varepsilon}) + 2A_2 \sum_i S_k^{ii} \leq -2A_1\tau_0\sum_i  S_k^{ii} + 2A_2 \sum_i S_k^{ii}=0.
	\]
	The maximum principle yields $h \pm T_{\alpha}(u) \geq 0$ in $B_1(p)\setminus B_\varepsilon(x_0)$.
	Consequently, 
	\[\varepsilon|u_{\alpha n}(p)|=  
	|(T_{\alpha}(u))_n(p)| \leq |h_n(p)| = |A_1(u - \underline{u}^{\varepsilon})_n(p)|\leq C|u_n(p)|.
	\]
	By Lemma \ref{lem:gra-bdy}, we obtain
	\begin{equation}\label{eq:TN-1}
		|u_{\alpha n}(p)|\leq C |u_n(p)|\varepsilon^{-1} \leq C \cdot C_\varepsilon\varepsilon^{-1}. 
	\end{equation}

	If $p\in\partial E_R(0)$, we choose the local coordinate at $p$ as in $\textit{Step 1}$ and consider the angular derivative
	\[T_\alpha=\partial_\alpha+\frac{\kappa_\alpha}{\sqrt R}(\tilde x_\alpha\partial_n-\tilde x_n\partial_\alpha). \]
	 The equation $S_k(D^2u)=1$ also implies that
	 \[
	L(T_\alpha(u))=\sum_{i,j} S_k^{ij}(T_{\alpha}(u))_{ij} = 0.
	\] 
	By \eqref{eq:C1-2}, \eqref{eq:tan-d1} and \eqref{eq:rho}, we obtain
	\begin{equation*}
		\begin{split}
			T_\alpha(u) & =u_\alpha+\frac{\kappa_\alpha}{\sqrt R}\tilde{x}_\alpha u_n-\frac{\kappa_\alpha}{\sqrt R}\tilde x_nu_\alpha \\
			& =u_\alpha+\bigg(\rho_\alpha+\frac{1}{R}O(|\tilde x'|^2)\bigg)u_n+\frac{\kappa_\alpha u_\alpha}{R}O(|\tilde x'|^2) \\
			& =\frac{u_n}{R}O(|\tilde x'|^2)+\frac{\kappa_\alpha u_\alpha}{R} O(|\tilde x'|^2) \\
			& =\frac{1}{\sqrt R}O(|\tilde x'|^2), \quad\text{on }\partial E_R(0)\cap E_{\frac{R}{3}}(p).
		\end{split}	
	\end{equation*}
	Also by \eqref{eq:C1-2}, we have
	\[|T_\alpha(u)|\leq C\sqrt{R}\quad\text{on }E_R(0)\cap\partial E_{\frac{R}{3}}(p).\]
	Consider the upper barrier of the form
	\[
	h(x) = -2A_0\bigg(\frac{1}{2}x^TAx-R\bigg)+\frac{A_0}{2}(x-p)^TA(x-p),
	\]
	where \(A_0\) is a positive constant that will be chosen later. Clearly, 
	\[L(h)=-2A_0\sum_{i}S_k^{ii}\lambda_i(A)+A_0\sum_{i}S_k^{ii}\lambda_i(A)\leq0.\]
	Moreover, 
	\[
	h(x) = \frac{A_0}{2}(x-p)^TA(x-p)\geq \frac{\lambda_{\text{min}}(A)A_0}{2}|\tilde x'|^2 \quad\text{on }\partial E_R(0)\cap E_{\frac{R}{3}}(p),
	\]
	and
	\[
	h(x) \geq \frac{A_0}{2}(x-p)^TA(x-p) =\frac{A_0}{3}R \quad\text{on }E_R(0)\cap\partial E_{\frac{R}{3}}(p).
	\] 
	Choosing \(A_0\) sufficiently large, the maximum principle yields \(h\pm \sqrt R T_\alpha(u)\geq 0\) in \(E_R(0)\cap E_{\frac{R}{3}}(p)\). Therefore, 
	\begin{equation}\label{eq:TN-2}
		|u_{\alpha n}(p)|=|(T_\alpha(u))_n(p)|\leq\frac{|h_n(p)|}{\sqrt R} \leq C.
	\end{equation}

	\textit{Step 3. Double normal derivatives on $\partial\Omega_R^\varepsilon$.} For $p\in\partial\Omega_R^\varepsilon$, we choose the local coordinate at $p$ as in \textit{Step 1}, and we further require that $D'^2 u = \{u_{\alpha\beta}\}_{1 \leq \alpha, \beta \leq n-1}$ is diagonal at $p$. It follows that
    \begin{equation}\label{eq:k-H-comp}
	S_{k-1}(D'^2 u)u_{nn} + S_k(D'^2 u) - \sum_{\beta=1}^{n-1} S_{k-2}(D'^2 u )_{\widehat{\beta\beta}}(u_{\beta n})^2 = 1, 
	\end{equation}
	where $(D'^2 u)_{\widehat{\beta\beta}}$ is the $(n-2) \times (n-2)$ matrix obtained by removing the $\beta$-th row and column from $D'^2 u$, and $S_0=1$, $S_{-1}=0$. If $p\in\partial B_\varepsilon(x_0)$, plugging \eqref{eq:TT-I} and \eqref{eq:rho-11} into \eqref{eq:k-H-comp}, we get
	\[
	C_{n-1}^{k-1}u_{nn} + C_{n-1}^k u_{n} \varepsilon^{-1}- C_{n-2}^{k-2}(u_n)^{-1}\varepsilon \sum_{\beta=1}^{n-1} (u_{\beta n})^2= (u_n)^{1-k}\varepsilon^{k-1}.
	\]
	We deduce from Lemma \ref{lem:gra-bdy} and \eqref{eq:TN-1} that 
	\[|u_{nn}(p)|\leq C\cdot C_\varepsilon\varepsilon^{-1}.\]
	If $p\in\partial E_R(0)$, plugging \eqref{eq:TT-I} and \eqref{eq:rho} into \eqref{eq:k-H-comp}, we get
	\[
	S_{k-1}\bigg(\frac{\kappa}{\sqrt R}\bigg)u_{nn} - S_k\bigg(\frac{\kappa}{\sqrt R}\bigg) u_{n}+ \sum_{\beta=1}^{n-1}S_{k-2}\bigg(\frac{\kappa|\beta}{\sqrt R}\bigg)(u_n)^{-1}(u_{\beta n})^2= (-u_n)^{1-k},
	\] 
	where $\kappa=(\kappa_1,\cdots,\kappa_{n-1})$ and $\kappa|\beta=(\kappa_1,\cdots,\kappa_{\beta-1},\kappa_{\beta+1},\cdots,\kappa_{n-1})$.
	Since $\kappa_\alpha$ has positive lower and upper bounds,  it follows from Lemma \ref{lem:gra-bdy} and \eqref{eq:TN-2} that 
	\[|u_{nn}(p)|\leq C.\]
	This completes the proof.
\end{proof}
    
\begin{lemma}\label{lem:C2-est}
	Let $u^{\varepsilon,R}\in C^\infty(\bar{\Omega}_R^\varepsilon)$ be the $k$-convex solution to \eqref{eq:u-e-R}. Then
	\[
	\sup_{\Omega_R^\varepsilon}|D^2u^{\varepsilon,R}|\leq C\cdot C_\varepsilon\varepsilon^{-1},
	\]
	where $C$ is a positive constant depending only on $n$, $k$, $\lambda_{\mathrm{max}}(A)$, $\lambda_{\mathrm{min}}(A)$, $R_0$ and $\alpha$ while independent of $\varepsilon$ and $R$, and $C_\varepsilon$ is as in Lemma \ref{lem:gra-bdy}.
\end{lemma}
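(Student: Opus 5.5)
The plan is to reduce the global Hessian bound to the boundary bound from the preceding lemma. We do this through a maximum principle for the largest second directional derivative, using the concavity of the operator. Since $u=u^{\varepsilon,R}$ is $k$-convex, $\lambda(D^2u)\in\Gamma_k\subset\Gamma_1$, so $\Delta u>0$. Every eigenvalue satisfies $\lambda_i> -(n-1)\lambda_{\max}$. Hence $|D^2u|\le C(n)\max_{|\xi|=1}u_{\xi\xi}$, and it suffices to bound $u_{\xi\xi}$ from above over all unit vectors $\xi$ and all points of $\Omega_R^\varepsilon$.

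Fix a unit vector $\xi$. Rewrite the equation as $F(D^2u):=S_k(D^2u)^{1/k}=1$, which is concave on $\Gamma_k$. Differentiating twice in the direction $\xi$ gives
\[
F^{ij}u_{ij\xi\xi}+F^{ij,rs}u_{ij\xi}u_{rs\xi}=0 .
\]
Concavity makes the second term nonpositive, so $F^{ij}(u_{\xi\xi})_{ij}\ge 0$. Thus $u_{\xi\xi}$ is a subsolution of the linearized operator, which is uniformly elliptic on the compact set $\bar\Omega_R^\varepsilon$ because $u$ is smooth and $k$-convex there. The weak maximum principle then yields
\[
\sup_{\Omega_R^\varepsilon}u_{\xi\xi}\le \max_{\partial\Omega_R^\varepsilon}u_{\xi\xi}.
\]
No lower-order or cutoff terms are needed, because the right-hand side is the constant $1$ and the equation has no $x$- or $Du$-dependence. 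This is the same mechanism as in the proof of Lemma \ref{lem:C1_est}.

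Taking the supremum over $\xi$, the preceding lemma gives $|D^2u|\le C\cdot C_\varepsilon\varepsilon^{-1}$ on $\partial B_\varepsilon(x_0)$ and $|D^2u|\le C$ on $\partial E_R(0)$. The second bound is dominated by the first, since $C_\varepsilon\varepsilon^{-1}\ge c>0$ for small $\varepsilon$ in all three cases; indeed $C_\varepsilon\varepsilon^{-1}\to\infty$ as $\varepsilon\to0$. Combining this with the eigenvalue comparison from the first step gives the claimed estimate, with a constant independent of $\varepsilon$ and $R$.

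The only delicate point is the subsolution property, namely $F^{ij}(u_{\xi\xi})_{ij}\ge0$. This requires working with the concave normalization $S_k^{1/k}$, or equivalently $\log S_k$, rather than $S_k$ itself, so that the concavity term has the correct sign. Once that is in place, everything else is bookkeeping, and the $\varepsilon$-dependence is inherited entirely from the inner boundary.
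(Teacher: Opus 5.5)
Your proposal is correct and follows essentially the paper's route: concavity of $S_k^{1/k}$ makes second derivatives subsolutions of the linearized operator, so the global Hessian bound reduces to the boundary estimates of the preceding lemma. The only difference is minor: the paper applies the maximum principle to $\Delta u$, controls $|D^2u|$ through $S_2(D^2u)\ge 0$ for $k\ge 2$, and treats $k=1$ separately via subharmonicity of $|D^2u|^2$; your use of each $u_{\xi\xi}$ with the $\Gamma_1$ eigenvalue bound $|\lambda_i|\le (n-1)\lambda_{\max}$ covers all $k\ge 1$ at once, and you also make explicit why the outer-boundary constant is absorbed into $C\cdot C_\varepsilon\varepsilon^{-1}$.
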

\begin{proof}
	Denote \( F(D^2u^{\varepsilon,R}) = S_k^{\frac{1}{k}} (D^2u^{\varepsilon,R}) \). The equation $S_k(D^2u^{\varepsilon,R})=1$ can be rewritten as  
	\[ 
	F(D^2u^{\varepsilon,R}) = 1 \quad \text{in } \Omega_R^\varepsilon. 
	\]  
	Differentiating this with respect to $x_l$ twice gives  
	\[ 
	\sum_{i,j}F^{ij} u^{\varepsilon,R}_{ijll} + \sum_{p,q,r,s}F^{pq,rs} u^{\varepsilon,R}_{pql} u^{\varepsilon,R}_{rsl} = 0. 
	\]  
	In view of the concavity of \( F \)  in the cone $\Gamma_k$, we have 
	\[ 
	\sum_{i,j,l}F^{ij} u^{\varepsilon,R}_{ijll} = \sum_{i,j}F^{ij} (\Delta u^{\varepsilon,R})_{ij} \geq 0. 
	\]  
	By the maximum principle, we obtain  
	\[ 
	\sup_{\Omega_R^\varepsilon} \Delta u^{\varepsilon,R} = \sup_{\partial \Omega_R^\varepsilon} \Delta u^{\varepsilon,R}. 
	\]
	For $k\geq2$, combining with the following equality
	\[
	(\Delta u^{\varepsilon,R})^2 - \sum_{i,j} |u^{\varepsilon,R}_{ij}|^2 = 2S_2(D^2u^{\varepsilon,R}) \geq 0,
	\]
	we obtain
	\begin{equation}\label{eq:mp1}
	\sup_{\Omega_R^\varepsilon}|D^2u^{\varepsilon,R}|\leq C(n)\sup_{\partial \Omega_R^\varepsilon} |D^2u^{\varepsilon,R}|.
	\end{equation}
	For $k=1$, by the equation $\Delta u^{\varepsilon,R}=1$, $u^{\varepsilon,R}_{ij}$ is harmonic. Then
	$$\Delta(|u^{\varepsilon,R}_{ij}|^2)=2|Du^{\varepsilon,R}_{ij}|^2+2u^{\varepsilon,R}_{ij}\Delta u^{\varepsilon,R}_{ij}\geq0,$$
	and thus $\Delta(|D^2u^{\varepsilon,R} |^2)\geq0$.
	The maximum principle yields
	\begin{equation}\label{eq:mp2}
	\sup_{\Omega_R^\varepsilon}|D^2u^{\varepsilon,R}|=\sup_{\partial \Omega_R^\varepsilon} |D^2u^{\varepsilon,R}|.
	\end{equation}
	With \eqref{eq:mp1} and \eqref{eq:mp2}, the proof is concluded by invoking Lemma \ref{lem:C2-est}.
\end{proof}

\section{Convergence and Hessian measures}\label{sec:u-e}
In this section, we investigate the limit of the solution $u^{\varepsilon,R}$ to the approximating problem \eqref{eq:u-e-R} as $R\to\infty$ and $\varepsilon\to0$ sequentially.
We first observe that $u^{\varepsilon,R}$ is increasing with respect to $R$. Indeed, for $R_2>R_1$, we have
\begin{equation*}
	\begin{cases}
		S_k(D^2u^{\varepsilon,R_1})=S_k(D^2u^{\varepsilon,R_2})=1 & \text{in }\Omega_{R_1}^\varepsilon, \\
		u^{\varepsilon,R_1}=u^{\varepsilon,R_2}=0 & \text{on }\partial B_\varepsilon(x_0), \\
		u^{\varepsilon,R_1}=\underline{u}=\underline{u}^{\varepsilon,R_2}\leq u^{\varepsilon,R_2} & \text{on }\partial E_{R_1}(0),
		\end{cases}
\end{equation*}
where $\underline{u}^{\varepsilon,R_2}$ is given by \eqref{eq:subsol}. The comparison principle implies that $u^{\varepsilon,R_1}\leq u^{\varepsilon,R_2}$ in $\Omega^\varepsilon_{R_1}$. Lemma \ref{lem:C0-est} allows   us to define $x\in\mathbb{R}^n\setminus B_\varepsilon(x_0)$, 
$$u^\varepsilon(x)=\lim_{R\to\infty}u^{\varepsilon,R}(x).$$ 
By Lemma \ref{lem:C0-est}, Remark \ref{rmk:Du-est} and Lemma \ref{lem:C2-est}, $u^{\varepsilon,R}$, $|Du^{\varepsilon,R}|$ and $|D^2u^{\varepsilon,R}|$ are locally uniformly bounded in $\mathbb{R}^n\setminus B_\varepsilon(x_0)$, independent of $R$. Therefore, by applying Evans-Krylov theorem and Schauder estimates, we can obtain higher order estimates for $u^{\varepsilon,R}$ that are independent of $R$. We conclude that as $R\to\infty$,
\[u^{\varepsilon,R}\to u^\varepsilon\quad\text{in }C_{\mathrm{loc}}^\infty(\mathbb{R}^n\setminus B_\varepsilon(x_0)).\] 
It follows that $u^\varepsilon\in C^\infty(\mathbb{R}^n\setminus B_\varepsilon(x_0))$ is $k$-convex and satisfies
\[
\begin{cases}
	S_k(D^2u^\varepsilon)=1 & \text{in }\mathbb{R}^n\setminus\bar{B}_\varepsilon(x_0), \\
	u^\varepsilon=0 & \text{on }\partial B_\varepsilon(x_0).
\end{cases}
\]
Recall from the proof of Proposition \ref{prop:u-e-R-exist} that $\underline{u}^{\varepsilon,R}\geq\underline{u}$ in $\mathbb{R}^n\setminus B_1(0)$, together with Lemma \ref{lem:C0-est}, we have
\begin{equation}\label{eq:u-ue-psi}
	\underline{u}\leq u^{\varepsilon}\leq \psi\quad\text{in }\mathbb{R}^n\setminus B_1(0).
\end{equation}
Combining with \eqref{eq:subsol-u-2}, $u^\varepsilon$ satisfies 
\[
\liminf_{|x|\to\infty}\bigg(u^\varepsilon(x)-\bigg(\frac{1}{2}x^TAx+\mu(\alpha)\bigg)\bigg)\geq\lim_{|x|\to\infty}\bigg(\underline{u}(x)-\bigg(\frac{1}{2}x^TAx+\mu(\alpha)\bigg)\bigg)=0,
\]
\[
\limsup_{|x|\to\infty}\bigg(u^\varepsilon(x)-\bigg(\frac{1}{2}x^TAx+\mu(\alpha)\bigg)\bigg)\leq\lim_{|x|\to\infty}\bigg(\psi(x)-\bigg(\frac{1}{2}x^TAx+\mu(\alpha)\bigg)\bigg)=0.
\]
Let $\alpha_0$ be as in Proposition \ref{prop:u-e-R-exist}. By letting 
\begin{equation}\label{eq:c*-c}
	c^*=\mu(\alpha_0)>0\quad\text{and}\quad c=\mu(\alpha)\geq c^*,
\end{equation} 
with $\alpha\geq\alpha_0$, we obtain that $u^\varepsilon$ is the solution to the exterior Dirichlet problem for $k$-Hessian equations with the asymptotic behavior $\frac{1}{2}x^TAx+c$. Precisely,
\begin{proposition}\label{prop:u-e}
	Let $n\geq3$ and $1\leq k\leq n$. For any given diagonal matrix $A\in\mathcal{A}_k$, assuming further that $\lambda_{\mathrm{max}}(A)<\frac{1}{2} $ when $k=1$, there exists a positive constant $c^*$, depending only on $n$, $k$, $\lambda_{\mathrm{max}}(A)$ and $\lambda_{\mathrm{min}}(A)$, such that for every $c\geq c^*$, the exterior Dirichlet problem 
	\begin{equation}\label{eq:EDP-u-e}
	\begin{cases}
		S_k(D^2u)=1 \quad\text{in }\mathbb{R}^n\setminus B_\varepsilon(x_0), \\
		u=0 \quad\text{on }\partial B_{\varepsilon}(x_0), \\
		\lim\limits_{|x|\to\infty}(u(x)-(\frac{1}{2}x^TAx+c))=0, 
	\end{cases}
\end{equation}
	has a unique $k$-convex solution  $u^\varepsilon\in C^\infty(\mathbb{R}^n\setminus B_\varepsilon(x_0))$. Moreover, for any given bounded domain $\Omega$ with $\bar{\Omega}\subset\mathbb{R}^n\setminus\{x_0\}$, $u^\varepsilon$ satisfies the uniform estimate 
	\[\|u^\varepsilon\|_{C^1(\Omega)}\leq C(\Omega),\]
	for any $0<\varepsilon<\frac{1}{2}\min\{\mathrm{dist}(x_0,\partial\Omega),1\}$, where $C(\Omega)$ is a positive constant depending only on $n$, $k$, $\lambda_{\mathrm{max}}(A)$, $\lambda_{\mathrm{min}}(A)$, $c$ and $\mathrm{dist}(x_0, \partial\Omega)$ while independent of $\varepsilon$.
\end{proposition}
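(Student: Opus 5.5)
Most of the statement has already been assembled in the discussion preceding it, so the plan is to collect those pieces and supply the two remaining ingredients: uniqueness and the $\varepsilon$-independent $C^1$ bound.

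\emph{Existence.} Fix $c\ge c^*:=\mu(\alpha_0)$. Since $\mu$ is continuous and strictly increasing, there is a unique $\alpha\ge\alpha_0$ with $\mu(\alpha)=c$. With this $\alpha$, Proposition \ref{prop:u-e-R-exist} gives the solutions $u^{\varepsilon,R}$, and these increase in $R$ by the comparison argument above. Lemma \ref{lem:C0-est}, Remark \ref{rmk:Du-est} and Lemma \ref{lem:C2-est}, combined with Evans--Krylov and Schauder, give $C^\infty_{\mathrm{loc}}$ convergence to a $k$-convex $u^\varepsilon$. The sandwich \eqref{eq:u-ue-psi} together with \eqref{eq:subsol-u-2} yields the asymptotic condition with constant $c$.

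The constant $\alpha_0$ depends on $n,k,\lambda_{\max}(A),R_0$. I would fix $R_0$ in terms of $\lambda_{\max}(A)$, so that $c^*$ depends only on the stated quantities. The quantity $h_k(a)$ is determined by $A$, and for diagonal $A$ with a fixed spectrum this is harmless up to permutation.

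\emph{Uniqueness.} Let $v$ be another $k$-convex solution of \eqref{eq:EDP-u-e}. For any $\eta>0$, the asymptotics give $|u^\varepsilon-v|<\eta$ on $\partial E_R(0)$ for $R$ large, while $u^\varepsilon=v=0$ on $\partial B_\varepsilon(x_0)$. Both functions solve $S_k=1$, so $u^\varepsilon\pm\eta$ act as sub/supersolutions relative to $v$. The comparison principle on $\Omega_R^\varepsilon$ then gives $|u^\varepsilon-v|\le\eta$ there. Letting $R\to\infty$ and then $\eta\to0$ gives $u^\varepsilon=v$.

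\emph{Uniform $C^1$ bound.} Set $\delta=\frac12\min\{\mathrm{dist}(x_0,\partial\Omega),1\}$, so that $\Omega\subset\mathbb R^n\setminus B_{2\delta}(x_0)$ and $\varepsilon<\delta$.

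The $C^0$ part of the bound is handled as follows.
\begin{itemize}
\item On $\Omega\setminus B_1(0)$, use \eqref{eq:u-ue-psi}.
\item On $B_1(0)$, the upper bound $u^{\varepsilon,R}\le\psi$ holds by Lemma \ref{lem:C0-est}. For the lower bound, $u^{\varepsilon,R}\ge\underline u^{\varepsilon,R}=\underline u^\varepsilon\ge -a^*\varepsilon^2\ge-1$ near $x_0$, since $\underline{u}^{\varepsilon}=a^*(|x-x_0|^2-\varepsilon^2)$. More simply, $u^{\varepsilon,R}\ge 0$ in $B_1(0)$ because $\underline u^\varepsilon\ge0$ outside $B_\varepsilon(x_0)$.
\end{itemize}
Both bounds depend only on $\psi$ over a ball containing $\Omega$, hence only on $n,k,A,c$ and the size of $\Omega$.

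The gradient part follows from \eqref{eq:C1-3} with $\delta$ as above and $B_2(0)$ replaced, via Remark \ref{rmk:Du-est}, by a large ball $B_\rho(0)\supset\Omega$. This gives $|Du^{\varepsilon,R}|\le C$ on $B_\rho(0)\setminus B_\delta(x_0)$ independently of $\varepsilon$ and $R$, and the bound passes to the limit $u^\varepsilon$.

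\emph{Main obstacle.} I expect the main subtlety to be verifying that the constant in \eqref{eq:C1-3} is genuinely independent of $\varepsilon$. That constant comes from the maximum principle for $|Du|^2$ on the annular region $B_\rho(0)\setminus B_\delta(x_0)$. This region does not touch $\partial B_\varepsilon(x_0)$, so its boundary is $\partial B_\rho(0)\cup\partial B_\delta(x_0)$. On both pieces, the interior estimate of Chou--Wang bounds the gradient by the oscillation of $u$ on $\delta/2$-neighborhoods, and that oscillation is controlled by $\psi$ and the lower bound $0$. I would check carefully that these neighborhoods stay away from $B_\varepsilon(x_0)$, which holds since $\varepsilon<\delta/2$ after shrinking $\delta$ if needed.
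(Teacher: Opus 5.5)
Your proposal is correct and follows the paper's own argument: existence from the monotone limit $R\to\infty$ of $u^{\varepsilon,R}$ with $c=\mu(\alpha)$, uniqueness by comparison on $\Omega_R^\varepsilon$, and the $\varepsilon$-independent $C^1$ bound from the sandwich $\underline{u}^{\varepsilon,R}\le u^\varepsilon\le\psi$ combined with \eqref{eq:C1-3} and Remark \ref{rmk:Du-est}. One small correction to your last paragraph: shrinking $\delta$ makes $\varepsilon<\delta/2$ harder to achieve, not easier, so use the room the hypothesis $\varepsilon<\frac12\min\{\mathrm{dist}(x_0,\partial\Omega),1\}$ gives you instead, placing the inner sphere at radius $\frac34\min\{\mathrm{dist}(x_0,\partial\Omega),1\}$ (alternatively, when $\varepsilon\ge\delta/2$ just invoke \eqref{eq:C1-1}, since $C_\varepsilon$ is then bounded in terms of $\delta$).
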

\begin{proof}
	The existence is a consequence of the above argument, and the uniqueness follows from the comparison principle. By Lemma \ref{lem:C0-est}, we have
	\begin{equation}\label{eq:u-e-C0}
		\underline{u}^{\varepsilon,R}\leq u^\varepsilon\leq\psi\quad\text{in }\mathbb{R}^n\setminus B_\varepsilon(x_0),
	\end{equation}
	 and $\underline{u}^{\varepsilon,R}$ clearly is locally bounded below in $\mathbb{R}^n\setminus B_\varepsilon(x_0)$ independent of $\varepsilon$ and $R$. The estimate is then obtained via Remark \ref{rmk:Du-est}.
\end{proof}

In the rest of this section, we will always assume \eqref{eq:c*-c}. we denote   
\begin{equation*}
	\tilde{u}^\varepsilon(x)=
	\begin{cases}
		u^\varepsilon(x), & x\in\mathbb{R}^n\setminus B_\varepsilon(x_0), \\
		0, & x\in B_\varepsilon(x_0).
	\end{cases}
\end{equation*}
Next, we will show that $\tilde{u}^\varepsilon$ converges locally in $k$-Hessian measure sense to the prescribed quadratic polynomial $\frac{1}{2}x^TAx+c$, i.e. the function $\psi(x)$ defined in \eqref{eq:psi}, as $\varepsilon\to0^+$, by following the framework of $k$-Hessian measures developed in \cite{Trudinger-Wang-II, Trudinger-Wang-III} and adapted in \cite{Wang-Xiao}.
\begin{proposition}\label{prop:limit}
	There holds $\tilde{u}^\varepsilon\to\psi$ in $L^1_{\mathrm{loc}}(\mathbb{R}^n)\cap C^{0,\gamma}_{\mathrm{loc}}(\mathbb{R}^n\setminus\{x_0\})$ for any $0<\gamma<1$ as $\varepsilon\to0^+$, where $\psi(x)=\frac{1}{2}x^TAx+c$. 
\end{proposition}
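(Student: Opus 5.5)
The plan is a compactness argument followed by identification of the limit through a removable-singularity and uniqueness argument for $k$-Hessian measures. Throughout, $c=\mu(\alpha)$ as in \eqref{eq:c*-c}.

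\textbf{Step 1: uniform bounds.} First I would show that $\{\tilde u^\varepsilon\}$ is locally uniformly bounded on $\mathbb{R}^n$.

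\emph{Upper bound.} By \eqref{eq:u-e-C0}, $u^\varepsilon\le\psi$ outside $B_\varepsilon(x_0)$. Inside the ball, $\tilde u^\varepsilon=0\le\psi$, since $c>0$.

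\emph{Lower bound.} Near $x_0$ we have $\underline u^{\varepsilon,R}=\underline u^\varepsilon=a^*(|x-x_0|^2-\varepsilon^2)\ge -a^*\varepsilon^2$. Away from $x_0$, $\underline u^{\varepsilon,R}$ is bounded below independently of $\varepsilon$. Hence $\tilde u^\varepsilon\ge -C$ locally.

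\emph{Derivatives.} Proposition \ref{prop:u-e} gives $\|u^\varepsilon\|_{C^1(\Omega)}\le C(\Omega)$ for every bounded $\Omega$ with $\bar\Omega\subset\mathbb{R}^n\setminus\{x_0\}$, uniformly in small $\varepsilon$.

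By Arzel\`a--Ascoli, every sequence $\varepsilon_j\to0$ then has a subsequence with $\tilde u^{\varepsilon_j}\to v$ in $C^{0,\gamma}_{\rm loc}(\mathbb{R}^n\setminus\{x_0\})$ for every $\gamma<1$. Since the functions are uniformly bounded on compacts and converge a.e., dominated convergence also gives convergence in $L^1_{\rm loc}(\mathbb{R}^n)$. It therefore suffices to prove that every such limit satisfies $v=\psi$; the whole family then converges.

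\textbf{Step 2: $\tilde u^\varepsilon$ is $k$-convex on all of $\mathbb{R}^n$.} Next I would check that $\tilde u^\varepsilon$ is $k$-convex (i.e.\ $k$-subharmonic in the sense of \cite{Trudinger-Wang-II}) on the whole space.
\begin{itemize}
\item Inside $B_\varepsilon(x_0)$ the constant $0$ is weakly $k$-convex.
\item Outside, $u^\varepsilon$ is smooth and $k$-convex.
\item Across $\partial B_\varepsilon(x_0)$, Lemma \ref{lem:gra-bdy} (which passes to the limit $R\to\infty$) gives $\partial_\nu u^\varepsilon\ge 2a^*\varepsilon>0$ in the outward radial direction. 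The graph therefore has a convex kink, so no smooth test function can touch $\tilde u^\varepsilon$ from above there with too much $k$-concavity. Equivalently, near the sphere $\tilde u^\varepsilon$ coincides with $\max\{u^\varepsilon_{\rm ext},0\}$ for a local $k$-convex extension $u^\varepsilon_{\rm ext}$.
\end{itemize}
Then the weak continuity of Hessian measures under $L^1_{\rm loc}$ convergence \cite{Trudinger-Wang-II} shows that $v$ is $k$-convex on $\mathbb{R}^n$ and $\mu_k[\tilde u^{\varepsilon_j}]\rightharpoonup\mu_k[v]$.

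Away from $x_0$, $\mu_k[\tilde u^{\varepsilon}]$ equals Lebesgue measure (times the normalizing constant) for small $\varepsilon$. Hence
\[
\mu_k[v]=\mathcal L^n+m\,\delta_{x_0}
\]
for some $m\ge0$.

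\textbf{Step 3: ruling out a Dirac mass at $x_0$ (main obstacle).} I would use that $v$ is bounded near $x_0$, by Step 1. Suppose $m>0$. Compare $v$ on a small ball $B_r(x_0)$ with
\[
w=\psi+\sup_{\partial B_r(x_0)}|v-\psi|+\eta\,\Phi,
\]
where $\Phi$ is the $k$-Hessian fundamental solution, with Hessian measure a multiple of $\delta_{x_0}$:
\begin{itemize}
\item for $k<n/2$, $\Phi=-|x-x_0|^{2-n/k}+r^{2-n/k}$;
\item for $k=n/2$, $\Phi=\log(|x-x_0|/r)$.
\end{itemize}
Choose $\eta>0$ so that the point mass of $\mu_k[w]$ at $x_0$ does not exceed $m$. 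The comparison principle for Hessian measures \cite{Trudinger-Wang-II} then gives $v\le w$ in $B_r(x_0)$, and $w\to-\infty$ at $x_0$. This contradicts the uniform lower bound $v\ge -C$, so $m=0$.

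If this direct comparison is awkward, I would instead cite the fact that the Hessian measure of a locally bounded $k$-convex function does not charge sets of zero $k$-capacity; a point has zero $k$-capacity when $k\le n/2$. This step is the one I expect to be the main obstacle. It is also the place where the restriction $k\le n/2$ of the main theorems enters.

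\textbf{Step 4: identifying the limit.} Now $v$ is a $k$-convex solution of $S_k(D^2v)=1$ on all of $\mathbb{R}^n$ in the Hessian-measure sense. By \eqref{eq:u-ue-psi}, $\underline u\le v\le\psi$ outside $B_1(0)$, so \eqref{eq:subsol-u-2} gives $v-\psi\to0$ at infinity.

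For every $\eta>0$, compare $v$ with $\psi\pm\eta$ on large ellipsoids $E_R(0)$, using the weak comparison principle of \cite{Trudinger-Wang-II}. Letting $R\to\infty$ and then $\eta\to0$ yields $v=\psi$.

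Since the limit does not depend on the subsequence, $\tilde u^\varepsilon\to\psi$ in both stated topologies.
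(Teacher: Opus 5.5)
Your proof is correct in outline but takes a different route from the paper at the decisive step, which is excluding an atom of $\mu_k[v]$ at $x_0$.

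\textbf{How the two routes differ.} The paper does this quantitatively on the approximations (Lemma~\ref{lem:mu-k}). It mollifies $\tilde u^{\varepsilon_m}$ and splits a ball into three pieces. It then bounds the mass in the annulus $B_{3\varepsilon_m/2}(x_0)\setminus\bar B_{\varepsilon_m/2}(x_0)$ using the divergence form of $S_k$ and the $\varepsilon$-explicit bounds $|Du|\lesssim C_\varepsilon$, $|D^2u|\lesssim C_\varepsilon\varepsilon^{-1}$. This gives $O(\varepsilon_m^{n-2k})$, resp.\ $O(|\log\varepsilon_m|^{-k})$, and that is where $k\le n/2$ enters for them.

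You instead use only two facts: $v$ is bounded near $x_0$ (indeed $0\le\tilde u^\varepsilon\le\psi$), and the Hessian measure of a bounded $k$-convex function does not charge points when $k\le n/2$. The latter holds because points have zero $k$-capacity; equivalently, the Wolff-potential lower bound would force $v(x_0)=-\infty$. Your argument is shorter and explains the threshold $k\le n/2$ conceptually. It also makes the sharp $\varepsilon$-dependence of the boundary $C^1$, $C^2$ estimates unnecessary for this proposition. The cost is importing capacity theory, whereas the paper needs only weak continuity, a H\"older criterion, and comparison for continuous functions.

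Two other substitutions are fine. You use subsequential compactness plus uniqueness of the limit where the paper uses monotonicity in $\varepsilon$. You prove $\tilde u^\varepsilon\in\Phi^k$ by taking the max with a local $k$-convex extension, where the paper touches from below by $\underline u^{\varepsilon,R}$.

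\textbf{Two repairs are needed.}

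First, the explicit barrier in Step 3 points the wrong way for $k\ge 2$. To conclude $v\le w$ you need $\mu_k[v]\ge\mu_k[w]$ on $B_r(x_0)$. But the mixed terms in $S_k(D^2\psi+\eta D^2\Phi)$ are positive and blow up at $x_0$. For $k=2$ the cross term is $\eta\bigl(\mathrm{tr}A\,\Delta\Phi-\mathrm{tr}(A\,D^2\Phi)\bigr)>0$, of order $|x-x_0|^{-n/2}$. Hence $\mu_k[w]>\mathcal L^n=\mu_k[v]$ on the punctured ball.

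Dropping $\psi$, i.e.\ taking $w=\sup_{\partial B_r(x_0)}v+\eta\Phi$, fixes the direction. The comparison then involves a possibly discontinuous $v$, an unbounded $w$ and a Dirac measure, which the comparison principle for continuous functions does not cover. So the capacity statement you offer as a fallback is what actually carries Step 3. You should present it as the argument and cite it properly (Trudinger--Wang's capacity theory or Labutin's potential estimates).

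Second, in Step 4 you apply comparison to $v$ before knowing it is continuous at $x_0$, where it is only defined as a $\limsup$. The paper closes this gap by noting that $\mu_k[v](B_r(x))=|B_r|\le Cr^{n-2k+1}$ for $r<1$. By Trudinger--Wang this gives local H\"older continuity of $v$ on $\mathbb{R}^n$, and only then does the paper compare $v$ with $\psi\pm\eta$. Insert the same step, or cite a comparison principle that is valid for bounded $k$-convex functions.
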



We observe that $\tilde{u}^{\varepsilon}$ is decreasing in $\mathbb{R}^n$ with respect to $\varepsilon$. Indeed, since $u^{\varepsilon,R}$ is subharmonic, the maximum principle gives $u^{\varepsilon,R}\geq0$ in $\Omega_R^\varepsilon$. For $\varepsilon_1<\varepsilon_2$, we have 
\begin{equation*}
	\begin{cases}
		S_k(D^2u^{\varepsilon_1,R})=S_k(D^2u^{\varepsilon_2,R})=1 & \text{in }\Omega_R^{\varepsilon_2}, \\
		u^{\varepsilon_1,R}\geq 0=u^{\varepsilon_2,R} & \text{on }\partial B_{\varepsilon_2}(x_0), \\
		u^{\varepsilon_1,R}=\underline{u}=u^{\varepsilon_2,R} & \text{on }\partial E_R(0).
	\end{cases} 
\end{equation*} 
The comparison principle implies $u^{\varepsilon_1,R}\geq u^{\varepsilon_2,R}$ in $\Omega_R^{\varepsilon_2}$. Sending $R\to\infty$, $u^{\varepsilon_1}\geq u^{\varepsilon_2}$ in $\mathbb{R}^n\setminus B_{\varepsilon_2}(x_0)$. Notice that $u^{\varepsilon_1}\geq0=\tilde{u}^{\varepsilon_2}$ in $B_{\varepsilon_2}(x_0)\setminus B_{\varepsilon_1}(x_0)$, and thus $\tilde{u}^{\varepsilon_1}\geq\tilde{u}^{\varepsilon_2}$ in $\mathbb{R}^n$. In view of \eqref{eq:u-e-C0} and $\tilde{u}^\varepsilon=0$ in $B_\varepsilon(x_0)$,
the dominated convergence theorem, combined with the locally uniformly $C^1$ estimate in Proposition \ref{prop:u-e}, ensures that there exists $v\in L^1_{\mathrm{loc}}(\mathbb{R}^n)\cap C^{0,1}_{\mathrm{loc}}(\mathbb{R}^n\setminus\{x_0\})$ such that, as $\varepsilon\to 0^+$, for any $0<\gamma<1$, 
\begin{equation}\label{eq:u-e-v}
	\tilde{u}^\varepsilon\to v\quad\text{in }L^1_{\mathrm{loc}}(\mathbb{R}^n)\cap C^{0,\gamma}_{\mathrm{loc}}(\mathbb{R}^n\setminus\{x_0\}).
\end{equation}
To guarantee the upper semi-continuity of $v$ at $x_0$, we additionally define 
$$v(x_0):=\limsup_{x\to x_0}v(x).$$
Clearly, $v(x_0)$ is finite.

To further investigate properties of $\tilde{u}^\varepsilon$ and $v$ in the whole space $\mathbb{R}^n$, particularly across the singularity point $x_0$, we introduce an extended definition of $k$-convexity to upper semi-continuous functions by following \cite{Trudinger-Wang-II, Trudinger-Wang-III}. An upper semi-continuous function $u: \Omega\to[-\infty,\infty)$ is called $k$-convex in $\Omega$ if $S_k(D^2q)\geq0$ for any quadratic polynomial $q$ such that $u-q$ has a finite local maximum in $\Omega$. Furthermore, a $k$-convex function is called proper if it is not identically $-\infty$ on any connected component of $\Omega$. We denote the class of proper $k$-convex functions in $\Omega$ by $\Phi^k(\Omega)$.

\begin{lemma}\label{lem:Phi-k}
	Let $\tilde{u}^\varepsilon$ and $v$ be defined as above. Then $\tilde{u}^\varepsilon$, $v\in\Phi^k(\mathbb{R}^n)$.
\end{lemma}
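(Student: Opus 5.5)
We verify the viscosity-type definition directly for $\tilde u^\varepsilon$, and then pass to the limit $v$ using the closedness of $\Phi^k$ under monotone convergence, as in \cite{Trudinger-Wang-II}.

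\emph{Step 1: $\tilde u^\varepsilon\in\Phi^k(\mathbb{R}^n)$.} The function $\tilde u^\varepsilon$ is continuous, since $u^\varepsilon=0$ on $\partial B_\varepsilon(x_0)$, and it is finite everywhere, so properness is automatic. Let $q$ be a quadratic polynomial such that $\tilde u^\varepsilon-q$ has a local maximum at some $y$. We split into three cases.
\begin{itemize}
\item If $y\in\mathbb{R}^n\setminus\bar B_\varepsilon(x_0)$, then $D^2q\ge D^2u^\varepsilon(y)$ and $\lambda(D^2u^\varepsilon(y))\in\Gamma_k$. Since $\Gamma_k$ is a convex cone with $\Gamma_k+\Gamma_n\subset\Gamma_k$, we get $\lambda(D^2q)\in\Gamma_k$, and hence $S_k(D^2q)>0$.
\item If $y\in B_\varepsilon(x_0)$, then $-q$ has a local maximum at $y$. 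Thus $D^2q\ge0$, and $S_k(D^2q)\ge0$.
\item If $y\in\partial B_\varepsilon(x_0)$, note first that $u^\varepsilon\ge0$. This holds because $u^{\varepsilon,R}\ge0$ by subharmonicity, as noted above. Consequently $\tilde u^\varepsilon(x)\ge\tilde u^\varepsilon(y)=0$ near $y$, so $q(x)-q(y)\ge\tilde u^\varepsilon(x)-\tilde u^\varepsilon(y)\ge0$ near $y$. Hence $q$ has a local minimum at $y$, and $D^2q\ge0$ again.
\end{itemize}
In every case $S_k(D^2q)\ge0$. Alternatively, one can observe that $\tilde u^\varepsilon=\max\{u^\varepsilon_{\rm ext},0\}$ locally near the sphere and use the fact that the maximum of two $k$-convex functions is $k$-convex. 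The direct check above avoids needing an extension of $u^\varepsilon$ inside the ball.

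\emph{Step 2: $v\in\Phi^k(\mathbb{R}^n)$.} As $\varepsilon$ decreases, $\tilde u^\varepsilon$ increases (shown above). Moreover, $\tilde u^\varepsilon\le\psi$ locally: in $B_\varepsilon(x_0)$ we have $\tilde u^\varepsilon=0\le\psi$, since $c>0$. Hence $v=\lim\tilde u^\varepsilon=\sup_\varepsilon\tilde u^\varepsilon$ away from $x_0$.

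On $\mathbb{R}^n\setminus\{x_0\}$ the convergence is locally uniform, and uniform limits of viscosity subsolutions remain viscosity subsolutions. So if $v-q$ has a strict local maximum at $y\ne x_0$, we perturb $q$ by $\delta|x-y|^2$ and find points $y_\varepsilon\to y$ where $\tilde u^\varepsilon-q-\delta|x-y|^2$ attains a local maximum. This gives $S_k(D^2q+2\delta I)\ge0$, and letting $\delta\to0$ yields $S_k(D^2q)\ge0$.

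At $x_0$, $v$ is the upper semicontinuous regularization of an increasing limit of $k$-convex functions that are locally uniformly bounded above by $\psi$. By \cite[Theorem 2.2/Lemma 2.3]{Trudinger-Wang-II}, the function $(\sup_\varepsilon\tilde u^\varepsilon)^*$ is $k$-convex. Since $v=\sup_\varepsilon\tilde u^\varepsilon$ off $x_0$ and $v(x_0)=\limsup_{x\to x_0}v(x)$, this regularization coincides with $v$. Indeed, $\sup_\varepsilon \tilde u^\varepsilon(x_0)=0\le v(x_0)$ by monotonicity, so the regularization at $x_0$ equals $\limsup_{x\to x_0}v$. Finally, $v$ is proper because it is finite and bounded below by $0$.

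\emph{Main obstacle.} The delicate point is the single point $x_0$, where convergence is not uniform. A direct alternative is also available. Suppose $v-q$ has a local maximum at $x_0$ but $S_k(D^2q)<0$. One could argue as follows: a single point has zero $k$-capacity for $k\le n/2$, so $k$-convexity extends across it. This is exactly the removable-singularity result for usc $k$-convex functions bounded above, from \cite{Trudinger-Wang-II}. I would cite that result rather than reprove it.
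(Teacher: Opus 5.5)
Your proof is correct. In substance it is the paper's argument, repackaged.

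In Step 1, at $y\in\partial B_\varepsilon(x_0)$ you touch $\tilde u^\varepsilon$ from below by the constant $0$, so $q$ has a local minimum at $y$. The paper instead touches from below by the smooth $k$-convex function $\underline u^{\varepsilon,R}$ (extended by $\underline u^{\varepsilon}\le 0$ inside the ball) and transfers the local maximum of $\tilde u^\varepsilon-q$ to $\underline u^{\varepsilon,R}-q$. Both rest on the same comparison, and that comparison is also how $u^\varepsilon\ge 0$ should be justified. Near the sphere, \eqref{eq:u-e-C0} gives $u^\varepsilon\ge\underline u^{\varepsilon,R}=\underline u^{\varepsilon}=a^*(|x-x_0|^2-\varepsilon^2)\ge 0$. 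Subharmonicity by itself only yields upper bounds, so the phrase ``by subharmonicity'' does not support the claim.

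In Step 2 you identify $v$ with the upper semicontinuous envelope of $\sup_\varepsilon\tilde u^\varepsilon$, correctly using $v(x_0)\ge 0$, and then invoke the Perron-type closure property. The paper proves exactly this by hand, by contradiction:
\begin{itemize}
\item at $\bar x\ne x_0$ it uses pointwise convergence together with $\tilde u^\varepsilon\le v$;
\item at $x_0$ it chooses $x_n\to x_0$ with $v(x_n)\to v(x_0)$, so that $\tilde u^\varepsilon-q$ attains an interior maximum.
\end{itemize}
Your version is shorter and handles all points at once, which makes your separate uniform-convergence argument off $x_0$ redundant. However, it leans on a citation you have not pinned down; the paper uses Lemma 2.3 of \cite{Trudinger-Wang-II} for mollification. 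The property you need is the standard stability of viscosity subsolutions, and its proof is precisely the paper's Case 2, so the paper's route is self-contained where yours is not.

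Your capacity remark is unnecessary, and it would restrict you to $k\le n/2$. Both the envelope argument and the paper's proof work for every $k$.
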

\begin{proof}
	It is clear that $\tilde{u}^\varepsilon$ and $v$ are upper semi-continuous and proper in $\mathbb{R}^n$. 

	Let $q$ be a quadratic polynomial such that $\tilde{u}^\varepsilon-q$ has a finite local maximum at some point $\bar{x}\in\mathbb{R}^n  $. If $\bar{x}\in\mathbb{R}^n\setminus\bar{B}_\varepsilon(x_0)$ or $B_\varepsilon(x_0)$, then $S_k(D^2q)=S_k(D^2q(\bar{x}))\geq0$ due to the $k$-convexity and $C^2$ regularity of $\tilde{u}^\varepsilon$ at $\bar{x}$. If $\bar{x}\in\partial B_\varepsilon(x_0)$, we additionally set $\underline{u}^{\varepsilon,R}:=\underline{u}^\varepsilon\leq0$ in $B_\varepsilon(x_0)$. 
	Recalling from \eqref{eq:subsol} that $\underline{u}^{\varepsilon,R}\in C^{\infty}(\mathbb{R}^n\setminus B_\varepsilon(x_0))$ satisfies $\underline{u}^{\varepsilon,R}=\underline u^\varepsilon$ in $B_2(0)\setminus B_\varepsilon(x_0)$, this yields that $\underline{u}^{\varepsilon,R}$ is still smooth and $k$-convex in $\mathbb{R}^n$. Since $u^{\varepsilon,R}$ is increasing with respect to $R$, we have
	$$\tilde{u}^\varepsilon\geq u^{\varepsilon,R}\geq\underline{u}^{\varepsilon,R}\ \text{in }E_R(0)\setminus B_\varepsilon(x_0)\quad\text{and}\quad\tilde{u}^\varepsilon\geq\underline{u}^{\varepsilon,R}\text{ in }B_\varepsilon(x_0).$$ 
	Then for some $\delta>0$ with $B_\delta(\bar{x})\subset E_R(0)  $, 
	$$\tilde{u}^\varepsilon\geq\underline{u}^{\varepsilon,R}\text{ in }B_\delta(\bar{x})\quad\text{and}\quad\tilde{u}^\varepsilon(\bar{x})=\underline{u}^{\varepsilon,R}(\bar{x}).$$
	This yieids $\underline{u}^{\varepsilon,R}-q$ also attains a finite local maximum at $\bar{x}$. Utilizing the $k$-convexity of $C^2$ funciton $\underline{u}^{\varepsilon,R}$, we have $S_k(D^2q)\geq0$. That is, $\tilde{u}^\varepsilon\in\Phi^k(\mathbb{R}^n)$.
	
	We prove $v\in\Phi^k(\mathbb{R}^n)$ by contradiction. If not, there exists a quadratic polynomial $q$, a point $\bar{x}$ and some constant $\delta>0$ such that
	\[v-q\leq0 \text{ in }B_\delta(\bar{x})\quad\text{and}\quad v(\bar{x})-q(\bar{x})=0,\]
	and
	\[S_k(D^2q)<0.\]
	Without loss of generality, we may further assume
	\[
	v - q < c_\delta < 0 \text{ on } \partial B_\delta(\bar{x}). 
	\]
	As otherwise, we can replace the above $q$ with the perturbed polynomial \(q_\beta(x):= q(x) + \beta|x - \bar{x}|^2\) where \(\beta > 0\) is chosen so small that \(S_k(D^2q_\beta) < 0\). By the monotonicity of $\tilde{u}^\varepsilon$ with respect to $\varepsilon$, we obtain 
	\begin{equation}\label{eq:u-q}
		\tilde{u}^\varepsilon - q \leq v - q < c_\delta < 0 \quad \text{on} \, \partial B_\delta(\bar{x}).
	\end{equation} 
	We next consider two cases.

	\textit{Case 1.} If $\bar{x}\neq x_0$, we may assume \(\bar{B}_\delta(\bar{x})\cap\bar{B}_\varepsilon(x_0)=\emptyset\) for sufficiently small \(\varepsilon > 0\). 
	By \eqref{eq:u-e-v}, \(\tilde{u}^\varepsilon(\bar{x}) \to v(\bar{x})\) as $\varepsilon\to 0^+$. That is for any \(\eta > 0\) there exists \(\varepsilon_\eta > 0\) such that when \(0<\varepsilon < \varepsilon_\eta\), \(|\tilde{u}^\varepsilon(\bar{x}) - v(\bar{x})| < \eta\). Taking $\eta=-\frac{c_\delta}{2}$, we get
	\begin{equation*}
		\tilde{u}^{\varepsilon}(\bar{x})>v(\bar{x})-\eta=q(\bar{x})-\eta=q(\bar{x})+\frac{c_\delta}{2}.
	\end{equation*}
	Combining with \eqref{eq:u-q}, \(\tilde{u}^\varepsilon - q\) attains  a finite local maximum in \(B_\delta(\bar{x})\). Since \(S_k(D^2\tilde{u}^\varepsilon) = 1\) in \(B_\delta(\bar{x})\), we have \(S_k(D^2q) \geq 1\), a contradiction.

	\textit{Case 2.} If $\bar{x}=x_0$, we deduce from \( v(x_0) = \limsup_{x \to x_0} v(x) \)   that for any \( \eta > 0 \) small, there exists \(\{x_n\} \subset B_\delta(x_0) \setminus \{x_0\}\) and \( x_n \to x_0 \) such that \(|v(x_n) - v(x_0)| < \frac{\eta}{3}\) and \(|q(x_n)-q(x_0)|<\frac{\eta}{3}\). We fix \( x_n \), then there exists \( \varepsilon_1 = \varepsilon_1(\eta, x_n) > 0 \) such that when \( 0<\varepsilon < \varepsilon_1 \), \( |\tilde{u}^\varepsilon(x_n) - v(x_n)| < \frac{\eta}{3} \). Therefore, for this \( x_n \in B_\delta(x_0)\setminus\{x_0\} \) and $0<\varepsilon<\varepsilon_1$,
	\[
	|\tilde{u}^\varepsilon(x_n) - q(x_n)| \leq |\tilde{u}^\varepsilon(x_n) - v(x_n)| + |v(x_n) - v(x_0)| + |q(x_0) - q(x_n)| < \eta:=-\frac{c_\delta}{2}.
	\]
	Combining with \eqref{eq:u-q}, \( \tilde{u}^\varepsilon - q \) also attains a finite local maximum in \( B_\delta(x_0) \). Since $\tilde{u}^\varepsilon\in\Phi^k(\mathbb{R}^n)$, we have \( S_k(D^2q) \geq 0 \), a contradiction. 
\end{proof}

We proceed to investigate the convergence of $\tilde{u}^{\varepsilon}$ to $v$ in the sense of $k$-Hessian measure. As $\tilde{u}^\varepsilon\in C^{0}(\mathbb{R}^n)$, we can define the mollification of $\tilde{u}^\varepsilon$ by
\[\tilde{u}^\varepsilon_h(x)=\int_{\mathbb{R}^n}\rho_{h}(x-y)\tilde{u}^\varepsilon(y)\mathrm{d}y=\int_{\mathbb{R}^n}h^{-n}\rho\bigg(\frac{x-y}{h}\bigg)\tilde{u}^\varepsilon(y)\mathrm{d}y,\quad x\in\mathbb{R}^n,\]
where $\rho$ is the standard mollifier, namely $\rho$ is a smooth, nonnegative function with support in $B_1(0)$, and 
$\int_{B_1(0)}\rho=1$. From Lemma \ref{lem:Phi-k} and Lemma 2.3 of \cite{Trudinger-Wang-II}, it follows that $\tilde{u}_h^\varepsilon\in C^\infty(\mathbb{R}^n)\cap\Phi^k(\mathbb{R}^n)$ and as $h\to0$,
\begin{equation}\label{eq:u-h-u}
	\tilde{u}_h^\varepsilon\to \tilde{u}^\varepsilon\quad\text{in }C^0_{\mathrm{loc}}(\mathbb{R}^n)\cap C^\infty_{\mathrm{loc}}(\mathbb{R}^n\setminus\bar{B}_\varepsilon(x_0))\cap C^\infty_{\mathrm{loc}}(B_\varepsilon(x_0)).
\end{equation}
Assume $\varepsilon_m, h_m \to 0$ and  $h_m \ll \varepsilon_m$ as $m \to \infty$, and denote  
$u^m := \tilde{u}_{h_m}^{\varepsilon_m}$ for simplicity. Via Lemma \ref{lem:C0-est} and Remark \ref{rmk:Du-est}, we have as $m\to\infty$, 
$$u^m-\tilde{u}^{\varepsilon_m}\to 0\quad\text{in }C_{\text{loc}}^0(\mathbb{R}^n) \cap C_{\text{loc}}^{0,1}(\mathbb{R}^n\setminus \{x_0\}).$$
Combining \eqref{eq:u-e-v}, this yields that as $m\to\infty$, for any $0<\gamma<1$,
$$u^m\to v \quad\text{in }L_{\text{loc}}^1(\mathbb{R}^n) \cap C_{\text{loc}}^{0,\gamma}(\mathbb{R}^n\setminus \{x_0\}).$$ 
By the equivalence of convergence in $L^1_{\mathrm{loc}}$ and convergence in $k$-Hessian measure $\mu_k$ and Theorem 1.1 of \cite{Trudinger-Wang-II}, $\mu_k[u^m]$ converges weakly to $\mu_k[v]$, where $\mu_k[u^m]$ and $\mu_k[v]$ are $k$-Hessian measures generated by $u^m$ and $v$ respectively. According to Portmanteau
theorem, this can be equivalently stated as closed and open set conditions, i.e. for any ball $B = B_r(x)$,    
\begin{equation}\label{eq:oc}
\liminf_{m \to \infty} \mu_k[u^m](B) \geq \mu_k[v](B),
\end{equation}
and  
\begin{equation}\label{eq:cc}
\limsup_{m \to \infty} \mu_k[u^m](\bar{B}) \leq \mu_k[v](\bar{B}).
\end{equation}
\begin{lemma}\label{lem:mu-k}
	Let $1\leq k\leq \frac{n}{2}$ and $\mu_k[v]$ be as above, then $\mu_k[v]$ is the standard measure in $\mathbb{R}^n$. That is, for any ball $B=B_r(x)$, we have 
$$\mu_k[v](B)=\int_B\mathrm{d}x=|B|.$$ 
\end{lemma}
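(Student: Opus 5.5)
We need to show $\mu_k[v](B)=|B|$ for every ball $B=B_r(x)$. The strategy is to exploit the explicit structure of $u^m$. Away from $\bar B_{\varepsilon_m}(x_0)$, $u^m$ is a mollification of a solution of $S_k=1$. Inside $B_{\varepsilon_m}(x_0)$ it vanishes. Near $\partial B_{\varepsilon_m}(x_0)$ the only possible concentration of Hessian measure lives in a shrinking set, and we will show its mass vanishes when $k\le n/2$. We then combine this with the open/closed set conditions \eqref{eq:oc}--\eqref{eq:cc}.

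\textbf{Step 1 (balls avoiding $x_0$).} If $x_0\notin\bar B$, then for $m$ large $\bar B$ lies in a compact subset of $\mathbb{R}^n\setminus\bar B_{\varepsilon_m}(x_0)$ at distance larger than $h_m$. There $u^m\to v$ in $C^{0,\gamma}_{\rm loc}$, and $v$ is the locally uniform limit of smooth solutions $u^{\varepsilon}$ of $S_k=1$ with locally uniform $C^2$ bounds. One route is to run the interior $C^2$ estimates independent of $\varepsilon$ on compact sets away from $x_0$, via the Chou--Wang gradient bound plus standard interior Hessian estimates for $S_k$ (Pogorelov type, or simply the Trudinger--Wang weak continuity). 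The simpler route is weak continuity itself: $S_k(D^2u^{\varepsilon_m})=1$ pointwise near $\bar B$, so $\mu_k[u^{\varepsilon_m}]=dx$ there, and Theorem 1.1 of \cite{Trudinger-Wang-II} gives $\mu_k[v]=dx$ on $\mathbb{R}^n\setminus\{x_0\}$. Hence $\mu_k[v](B)=|B|$ whenever $x_0\notin \bar B$, and more generally $\mu_k[v]\llcorner(\mathbb{R}^n\setminus\{x_0\})=dx$.

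\textbf{Step 2 (reduction to an atom).} It remains to prove $\mu_k[v](\{x_0\})=0$. Once this holds, every ball satisfies $\mu_k[v](B)=|B\setminus\{x_0\}|=|B|$. By \eqref{eq:cc}, $\mu_k[v](\{x_0\})\le\mu_k[v](\bar B_\rho(x_0))$, which is at most $\limsup_m\mu_k[u^m](\bar B_\rho(x_0))$ for every small $\rho$. So it suffices to show that this limsup tends to $0$ as $\rho\to0$.

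\textbf{Step 3 (estimating the mass near $x_0$; the main obstacle).} We split $\bar B_\rho(x_0)$ into three pieces. On $\bar B_\rho(x_0)\setminus B_{\varepsilon_m+h_m}(x_0)$, $u^m$ is a mollified solution, and concavity of $S_k^{1/k}$ gives $S_k(D^2u^m)$ close to $1$; this piece contributes $O(\rho^n)$. On $B_{\varepsilon_m-h_m}(x_0)$ we have $u^m=0$, contributing nothing. On the shell $\{\varepsilon_m-h_m\le|x-x_0|\le\varepsilon_m+h_m\}$ we use the divergence form of $S_k$. Writing
$$S_k(D^2w)=\frac1k\partial_i\bigl(S_k^{ij}(D^2w)w_j\bigr),$$
we get $\mu_k[u^m](B_{t})=\frac1k\int_{\partial B_t}S_k^{ij}(D^2u^m)u^m_j\nu_i$ for $t=\varepsilon_m+h_m$. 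This is bounded by $C\,t^{n-1}|D^2u^m|^{k-1}|Du^m|$ on $\partial B_t$. Since $h_m\ll\varepsilon_m$, these derivatives are controlled by those of $u^{\varepsilon_m}$ near $\partial B_{\varepsilon_m}$, using Lemma \ref{lem:gra-bdy}, Lemma \ref{lem:C2-est} and the interior $C^2$ bound. This yields
$$\mu_k[u^m](B_t(x_0))\le C\varepsilon_m^{n-1}\,(C_{\varepsilon_m}\varepsilon_m^{-1})^{k-1}C_{\varepsilon_m}=C\varepsilon_m^{n-k}C_{\varepsilon_m}^k.$$
For $k<n/2$ we have $C_\varepsilon\sim\varepsilon^{-1}$, so the bound is $C\varepsilon_m^{n-2k}\to0$. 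For $k=n/2$ we have $C_\varepsilon\sim\varepsilon^{-1}|\log\varepsilon|^{-1}$, so the bound is $C|\log\varepsilon_m|^{-k}\to0$. This is exactly where $k\le n/2$ is used. The delicate point is that mollification near the corner $\partial B_{\varepsilon_m}$ (where $\tilde u^{\varepsilon}$ is only Lipschitz) does not inflate $|D^2u^m|$ beyond $C_\varepsilon/h_m$. To avoid this, I would instead apply the flux identity on a sphere $\partial B_t$ with $t\in(\varepsilon_m+h_m,2\varepsilon_m)$. Since $u^m$ is superlevel-monotone and $\mu_k[u^m]\ge0$, the mass of the inner ball is controlled by the flux through this outer sphere, where the smooth estimates apply. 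Summing the three pieces gives $\limsup_m\mu_k[u^m](\bar B_\rho)\le C\rho^n$, which tends to $0$ as $\rho\to0$, completing the proof.
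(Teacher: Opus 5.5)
Your route works, and its decisive estimate is the paper's. The paper bounds its $I_2$ term by a flux identity on a sphere of radius comparable to $\varepsilon_m$, using $|Du^{\varepsilon}|\le C_\varepsilon$ and $|D^2u^{\varepsilon}|\le C\,C_\varepsilon\varepsilon^{-1}$, which gives $\varepsilon_m^{n-2k}$ or $|\log\varepsilon_m|^{-k}$; your Step 3 does exactly this.

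What differs is the bookkeeping. The paper handles an arbitrary ball with a three-piece splitting. It gets the upper bound from \eqref{eq:oc} and the lower bound from \eqref{eq:cc}, the latter via $\bar B_{\theta r}$ and $\theta\to1^-$. You instead use weak continuity on $\mathbb{R}^n\setminus\{x_0\}$ to get $\mu_k[v]=dx$ off $x_0$. The lower bound is then automatic, and only an atom at $x_0$ has to be excluded. Integrating over the full ball $B_t(x_0)$, where $u^m$ is smooth, also removes the inner-sphere flux that the paper's annulus produces. Moreover, on $\partial B_t$ with $t>\varepsilon_m+h_m$ you need no $\eta$: there $Du^m=\rho_{h_m}*Du^{\varepsilon_m}$ and $D^2u^m=\rho_{h_m}*D^2u^{\varepsilon_m}$, so the sup bounds transfer directly.

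Two steps, however, rest on inequalities pointing the wrong way. Both are easy to repair.

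\textbf{Step 2.} \eqref{eq:cc} says $\limsup_m\mu_k[u^m](\bar B)\le\mu_k[v](\bar B)$, so it cannot bound $\mu_k[v]$ from above. The correct chain comes from \eqref{eq:oc}: $\mu_k[v](\{x_0\})\le\mu_k[v](B_\rho(x_0))\le\liminf_m\mu_k[u^m](B_\rho(x_0))$.

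\textbf{Step 3.} Concavity of $S_k^{1/k}$ applied to the average $D^2u^m=\rho_{h_m}*D^2u^{\varepsilon_m}$ gives only $S_k(D^2u^m)\ge1$ on $\bar B_\rho(x_0)\setminus B_t(x_0)$. An upper bound does not follow, since averages of matrices with $S_k=1$ can have arbitrarily large $S_k$, and $h_m\ll\varepsilon_m$ alone does not fix this. You need the paper's $I_1$ argument. For each fixed $m$, choose $h_m$ so small that $|D^2u^m-D^2u^{\varepsilon_m}|<\eta$ on that region; this is possible because $u^{\varepsilon_m}$ is smooth up to $\partial B_{\varepsilon_m}(x_0)$. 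That gives $S_k(D^2u^m)\le 1+C\eta\,\varepsilon_m^{-2(k-1)}$, and then you take $\eta\le\varepsilon_m^{2k}$.

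Finally, drop the alternative in Step 1 based on $\varepsilon$-independent interior $C^2$ bounds away from $x_0$. The paper proves none: its $C^2$ bound is global and blows up like $C_\varepsilon\varepsilon^{-1}$. Purely interior Hessian estimates for $S_k$ (Pogorelov type or otherwise) are also not available in general. The weak-continuity argument you also give is the right one and suffices.
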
  
\begin{proof}
Applying Theorem 1.1 in \cite{Trudinger-Wang-II} to $u^m\in C^\infty(\mathbb{R}^n)$, we have
\[
\begin{split}
\mu_k[u^m](B) & =\int_{B}S_k(D^2u^m)\mathrm{d}x \\
& = \int_{I_1} S_k(D^2u^m) \mathrm{d}x + \int_{I_2} S_k(D^2u^m) \mathrm{d}x + \int_{I_3} S_k(D^2u^m) \mathrm{d}x,
\end{split}
\]
where $I_1=B \setminus B_{\frac{3}{2}\varepsilon_m}(x_0)$, $I_2=B \cap \big(B_{\frac{3}{2}\varepsilon_m}(x_0) \setminus \bar{B}_{\frac{1}{2}\varepsilon_m}(x_0)\big)$, $I_3=B \cap \bar{B}_{\frac{1}{2}\varepsilon_m}(x_0)$. We will enlarge the three items in RHS separately.

We fix $m$. By the convergence in \eqref{eq:u-h-u},  for any \( \eta > 0 \), there exists \( h_{\eta} > 0 \) such that when \( 0 < h_m < h_{\eta} \), we have for $l=0, 1, 2$,
\[
|D^lu^m - D^l\tilde{u}^{\varepsilon_m}| < \eta \quad \text{in }  \big(\bar{B} \setminus B_{\frac{4}{3}\varepsilon_m}(x_0)\big) \cup \bar{B}_{\frac{2}{3}\varepsilon_m}(x_0).
\]
Then $S_k(D^2u^m)\leq S_k(D^2 \tilde{u}^{\varepsilon_m} + \eta I)$ in $I_1\cup I_3$. Setting $S_0(\cdot):=1$, a direct computation gives the expansion 
$$S_k(D^2 \tilde{u}^{\varepsilon_m} + \eta I) = S_k(D^2 \tilde{u}^{\varepsilon_m}) + \sum_{j=0}^{k-1} C_{n-j}^{k-j} \eta^{k-j} S_j(D^2 \tilde{u}^{\varepsilon_m}).$$ 
Combining with $S_k(D^2\tilde{u}^{\varepsilon_m})=1$ in $\mathbb{R}^n\setminus\bar{B}_{\varepsilon_m}(x_0)$, we obtain
\begin{equation}\label{eq:I1}
\begin{aligned}
\int_{I_1} S_k(D^2 u^m) \mathrm{d}x &\le \int_{I_1} S_k(D^2 \tilde{u}^{\varepsilon_m} + \eta I) \mathrm{d}x \\
&\le \int_{I_1} S_k(D^2 \tilde{u}^{\varepsilon_m}) \mathrm{d}x + C_1 \sum_{j=0}^{k-1} \eta^{k-j} \int_{I_1} S_j(D^2 \tilde{u}^{\varepsilon_m}) \mathrm{d}x \\
&= |I_1| + C_1 \sum_{j=0}^{k-1} \eta^{k-j} \int_{I_1} S_j(D^2 \tilde{u}^{\varepsilon_m}) \mathrm{d}x \\
&\le \begin{cases} 
|B| + C_1 \eta \varepsilon_m^{-2(k-1)}, \quad \text{if } k < \frac{n}{2}, \\[1ex]
|B| + C_1 \eta \varepsilon_m^{-2(k-1)} |\log \varepsilon_m|^{-(k-1)}, \quad \text{if } k = \frac{n}{2}. 
\end{cases}
\end{aligned}
\end{equation}
The last inequality comes from Lemma \ref{lem:C2-est}. 
Since $D^2\tilde{u}^{\varepsilon_m}=0$ in $B_{\varepsilon_m}(x_0)$,  we have
\begin{equation}\label{eq:I3}
	\int_{I_3} S_k(D^2u^m) \mathrm{d}x \leq \int_{I_3}S_k(D^2 \tilde{u}^{\varepsilon_m} + \eta I) \mathrm{d}x = \int_{I_3}S_k(\eta I) \mathrm{d}x\leq C_3\eta^k\varepsilon_m^n.
\end{equation}
Using the divergence structure of $S_k$, namely,
\[S_k(D^2u^m)=\frac{1}{k}\sum_{j}\frac{\partial}{\partial x_j}\bigg(\sum_i   S_k^{ij}(D^2u^m)\frac{\partial u^m}{\partial x_i}\bigg),\]
together with Lemmas \ref{lem:C1_est} and \ref{lem:C2-est}, we obtain
\begin{equation}\label{eq:I2}
\begin{split}
    \int_{I_2} S_k(D^2u^m) \mathrm{d}x &\leq \int_{B_{\frac{3}{2}\varepsilon_m}(x_0) \setminus \bar{B}_{\frac{1}{2}\varepsilon_m}(x_0)}S_k(D^2u^m)  \mathrm{d}x \\
    &= \frac{1}{k}\sum_{i,j}\bigg(\int_{\partial B_{\frac{3}{2}\varepsilon_m}(x_0)} S_k^{ij} (u^m)_i \nu_j \mathrm{d}\sigma - \int_{\partial B_{\frac{1}{2}\varepsilon_m}(x_0)} S_k^{ij} (u^m)_i \nu_j \mathrm{d}\sigma\bigg) \\
    &\leq C_2|\partial B_{\frac{3}{2}\varepsilon_m}(x_0)|\sup_{\partial B_{\frac{3}{2}\varepsilon_m}(x_0)}|D^2u^m|^{k-1}|Du^m|\\
    &\leq C_2|\partial B_{\frac{3}{2}\varepsilon_m}(x_0)|\sup_{\partial B_{\frac{3}{2}\varepsilon_m}(x_0)}(|D^2\tilde{u}^{\varepsilon_m}|+\eta)^{k-1}(|Du^m|+\eta)\\
    &\leq 
    \begin{cases}
    C_2 \varepsilon_m^{n-1}(\varepsilon_m^{-2}+\eta)^{k-1}(\varepsilon_m^{-1}+\eta),\quad\text{if }k<\frac{n}{2}, \\[1.5ex]
    C_2 \varepsilon_m^{n-1}(\varepsilon_m^{-2}|\log\varepsilon_m|^{-1}+\eta)^{k-1}(\varepsilon_m^{-1}|\log\varepsilon_m|^{-1}+\eta), \quad\text{if }k=\frac{n}{2},
    \end{cases}
\end{split}
\end{equation}
where $\nu$ is the unit outward normal vector of $B_{\frac{3}{2}\varepsilon_m}(x_0) \setminus \bar{B}_{\frac{1}{2}\varepsilon_m}(x_0)$. We conclude from \eqref{eq:I1}-\eqref{eq:I2} that
\[
\mu_k[u^m](B) \leq \begin{cases}
|B| + C_4 \varepsilon_m^{n-2k} + C_5 \eta \varepsilon_m^{-2(k-1)}, \quad\text{if }k<\frac{n}{2}, \\[1.5ex]
|B| + C_4 \varepsilon_m^{n-2k} |\log\varepsilon_m|^{-k}+ C_5 \eta \varepsilon_m^{-2(k-1)}|\log\varepsilon_m|^{-(k-1)}, \quad\text{if }k=\frac{n}{2},
\end{cases}
\]
for some \( C_4\), \(C_5 > 0 \) that are independent of $m$ and the choice of \( B \). Taking \( \eta \leq \varepsilon_m^{2k} \) and sending $m\to\infty$,  in view of \eqref{eq:oc}, we obtain
\[ \mu_k[v](B) \leq |B|.\] 
Similarly, in view of \eqref{eq:cc}, we can show that for any \( \theta \in (0, 1) \), $
\mu_k[v](\bar{B}_{\theta r}(x)) \geq |\bar{B}_{\theta r}(x)| $.
Letting \( \theta \to 1^- \), we obtain 
\[ \mu_k[v](B) \geq |B|. \]
Therefore, $\mu_k[v]$ is the standard measure in $\mathbb{R}^n$.
\end{proof}


Now, we are in a position to complete the proof of Proposition \ref{prop:limit}. A key point is to illustrate the continuity of $v$ at $x_0$ and then invoke the comparison principle for continuous $k$-convex functions.

\begin{proof}[Proof of Proposition \ref{prop:limit}.]
	By \eqref{eq:u-ue-psi} and \eqref{eq:u-e-v}, we have $\underline{u}\leq v\leq \psi$ in $\mathbb{R}^n\setminus B_1(0)$.
This implies that $v$ satisfies the asymptotic behavior at infinity
$$\lim_{|x|\to\infty}\bigg(v(x)-\bigg(\frac{1}{2}x^TAx+c\bigg)\bigg)=0.$$ 
Notice that $\mu_k[\psi](B)=\int_{B}S_k(D^2\psi)\mathrm{d}x=|B|$ for any $B=B_r(x)$. Via Lemma \ref{lem:mu-k}, we have 
\begin{equation*}
	\begin{cases}
		\mu_k[v]=\mu_k[\psi]\quad\text{in }\mathbb{R}^n, \\
		\lim\limits_{|x|\to\infty}(v-\psi)(x)=0.
	\end{cases}
\end{equation*}
We claim that $v\equiv \psi$ in $\mathbb{R}^n$. Indeed, for any $\eta>0$, there exists $\tilde{R}$ such that 
$$|\psi-v|<\eta,\quad\text{for }|x|\geq\tilde{R}.$$
Thanks to $\mu_k[v]$ being the standard measure in $\mathbb{R}^n$, we derive for $0<r<1$, 
$$\mu_{k}[v](B_r(x))=C(n)r^n\leq C(n)r^{n-2k+1}.$$
According to the equivalence between $k$-convexity and local H\"{o}lder continuity (see for instance (4.4) in \cite{Trudinger-Wang-III}), $v$ is locally H\"{o}lder continuous in $\mathbb{R}^n$. This allows us to apply the comparison principle, which is provided in Theorem 3.1 of \cite{Trudinger-Wang-I},
$$\psi-\eta<v<\psi+\eta\quad\text{for }|x|<\tilde{R}.$$
Therefore, $\psi-\eta\leq v\leq\psi+\eta$ in $\mathbb{R}^n$. By sending $\eta\to0$, the claim is thus proved, thereby completing the proof by virtue of \eqref{eq:u-e-v}.
\end{proof}

\section{Proofs of Theorems \ref{thm:main} and \ref{thm:main-1}}\label{sec:proof}
We begin with a special and simple case of Theorems \ref{thm:main} and \ref{thm:main-1}, stated as the following lemma, where we additionally assume that the matrix $A\in\mathcal{A}_k$ is diagonal and the vector $b$ vanishes.

\begin{lemma}\label{lem:AIb0}
	Let $n\geq3$ and $1\leq k \leq \frac{n}{2}$. Then for any given diagonal matrix $A\in\mathcal{A}_k$, assuming further that $\lambda_{\mathrm{max}}(A)<\frac{1}{2} $ when $k=1$, there exists a smooth, strictly convex domain $\Omega_0$ and a positive   constant $c^*$, depending only on $n$, $k$, $\lambda_{\mathrm{max}}(A)$ and $\lambda_{\mathrm{min}}(A)$, such that for every $c\geq c^*$, the problem 
	\begin{equation*}
	\begin{cases}
		S_k(D^2u)=1  \quad\text{in }\mathbb{R}^n\setminus\bar\Omega_0, \\
		u=0  \quad\text{on }\partial\Omega_0, \\
		\lim\limits_{|x|\to\infty} \big(u(x)-\big(\frac{1}{2}x^TAx+c\big)\big)=0,
	    \end{cases}
\end{equation*}
	has a unique $k$-convex solution $u\in C^\infty(\mathbb{R}^n\setminus\Omega_0)$ which is not quasiconvex.
\end{lemma}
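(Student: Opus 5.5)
We take $\Omega_0=B_\varepsilon(x_0)$ with $x_0\in B_{1/2}(0)\setminus\{0\}$ fixed and $\varepsilon>0$ small. Existence and uniqueness of the $k$-convex solution $u^\varepsilon$ for every $c\geq c^*$ come from Proposition \ref{prop:u-e}. It then remains to show that $u^\varepsilon$ is not quasiconvex once $\varepsilon$ is small. The mechanism is Proposition \ref{prop:limit}: as $\varepsilon\to0^+$, $\tilde u^\varepsilon\to\psi=\frac12x^TAx+c$ locally uniformly away from $x_0$. The limit $\psi$ has its minimum value $c>0$ at the origin, whereas $\tilde u^\varepsilon=0$ on $\bar B_\varepsilon(x_0)$. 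So for small $\varepsilon$ the sublevel set $\{\tilde u^\varepsilon\le t\}$ with $0<t<c$ should consist of a tiny neighborhood of $\bar B_\varepsilon(x_0)$ together with the region near the origin where $\psi<t$, which is empty. We therefore expect a two-level structure that breaks convexity.

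To make this precise, fix $t$ slightly above $c$, say $t=c+\eta$ with $\eta>0$ chosen so that the ellipsoid $\{\psi\le c+2\eta\}=\{\frac12 x^TAx\le 2\eta\}$ does not contain $x_0$. This is possible since $x_0\neq0$; take $\eta<\frac18 x_0^TAx_0$. Then:
\begin{enumerate}
\item $x_0\in\Sigma_t(\tilde u^\varepsilon)$, because $\tilde u^\varepsilon(x_0)=0<t$.
\item For $\varepsilon$ small, the origin belongs to $\Sigma_t(\tilde u^\varepsilon)$. Indeed $0\notin \bar B_{1/2}(x_0)$ is not guaranteed, so we pick a point $y$ with $\psi(y)<c+\eta/2$ and $|y-x_0|\ge\frac12|x_0|$ (e.g.\ $y=0$ if $|x_0|$ is not tiny relative to itself, which holds since $|0-x_0|=|x_0|>0$). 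Uniform convergence on a compact set avoiding $x_0$ gives $\tilde u^\varepsilon(y)<t$.
\item Pick the midpoint-type point $z=\frac12(x_0+y)$, or more generally a point on the segment $[y,x_0]$ at positive distance from $x_0$ with $\psi(z)>c+2\eta$. This is possible because $\psi(x_0)=\frac12x_0^TAx_0+c>c+4\eta$, so by continuity points of the segment close to, but not equal to, $x_0$ satisfy $\psi>c+3\eta$. Fix such a $z$ with $|z-x_0|=d>0$. By Proposition \ref{prop:limit}, uniform convergence on $\overline{B_{2|x_0|}(0)}\setminus B_{d/2}(x_0)$ gives $\tilde u^\varepsilon(z)>c+2\eta>t$ for $\varepsilon$ small.
\end{enumerate}
Thus $y,x_0\in\Sigma_t$ but $z\in[y,x_0]\setminus\Sigma_t$, so $\Sigma_t(\tilde u^\varepsilon)$ is not convex. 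It is natural to choose $y=0$, so that $\psi(y)=c$.

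The uniformity in $c$ needs a check, since the statement asks for one $\Omega_0$ that works for every $c\ge c^*$. Here the choices of $\eta$ and $z$ above depend only on $A$ and $x_0$, not on $c$. However, the smallness threshold for $\varepsilon$ in Proposition \ref{prop:limit} a priori depends on $c$. I would remove this dependence by monotonicity in $c$, through the following comparison. Write $u^\varepsilon_c$ for the solution with constant $c$. By the comparison principle, $u^\varepsilon_c$ is nondecreasing in $c$. Also, $u^\varepsilon_{c'}-(c'-c)$ solves the same equation with boundary value $-(c'-c)\le0$, so $u^\varepsilon_{c'}-u^\varepsilon_c\le c'-c$. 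Hence $w_c:=u^\varepsilon_c-c$ is nonincreasing in $c$.

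The argument only needs two things:
\begin{enumerate}
\item an upper bound $w_c(0)<\eta$, which follows from $\tilde u^\varepsilon\le\psi$, since (\ref{eq:u-e-C0}) gives $w_c\le\frac12x^TAx$ at the origin directly;
\item a lower bound $w_c(z)>2\eta$ uniformly in $c\ge c^*$.
\end{enumerate}
The lower bound is the main obstacle, because $w_c$ decreases in $c$, so a bound at $c^*$ does not propagate upward. One option is to prove instead a uniform-in-$c$ convergence statement: the Hessian-measure argument of Lemma \ref{lem:mu-k} combined with the comparison argument should give $\sup_{K}|\tilde u^\varepsilon_c-\psi_c|\to0$ uniformly in $c$ on compacts $K\not\ni x_0$. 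The gradient estimate on $\partial B_\varepsilon$ and the barrier $\bar u^\varepsilon$ depend on $c$ only through $\mu(\alpha)$, which should be controllable. The fallback is a concrete barrier: compare $u^\varepsilon_c$ from below, on $B_\rho(x_0)^c$, with $\psi_c$ minus a multiple of the fundamental solution $|x-x_0|^{2-n/k}$ of $S_k$ (or $\log$ when $k=n/2$), whose size shrinks as $\varepsilon\to0$ independently of $c$. This is exactly where $k\le n/2$ enters: the point singularity is removable.

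Finally, strict convexity and smoothness of $\Omega_0=B_\varepsilon(x_0)$ are automatic, and Remark \ref{rmk} follows from the freedom in choosing $x_0$ and $\varepsilon$.
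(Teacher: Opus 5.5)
Your steps 1--3 are correct and are essentially the paper's proof. The paper also takes $\Omega_0=B_\varepsilon(x_0)$, uses $\tilde u^\varepsilon\le\psi$ to put the origin in a sublevel set, and uses the convergence in Proposition \ref{prop:limit} away from $x_0$ to show that the segment from $B_\varepsilon(x_0)$ to $0$ must leave that sublevel set. The only difference is packaging. The paper argues by contradiction at the level $t=\psi(0)=c$ and passes to the limit along $[x_m,0]$ to get $\psi(x_0)\le\psi(0)$. You instead exhibit a single bad point $z$ at level $c+\eta$, which is slightly more direct. Your item 2 is muddled, but all it needs is $\tilde u^\varepsilon(0)\le\psi(0)=c<t$ for $\varepsilon<|x_0|$, which you eventually use.

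The uniformity paragraph should be dropped: your repair is wrong, and its goal cannot be reached with balls. The paper itself fixes $c\ge c^*$ first and only then takes $\varepsilon$ small. So its proof gives $\Omega_0=B_{\varepsilon(c)}(x_0)$, which is exactly what your steps 1--3 give.

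Your fallback barrier $\psi-\beta|x-x_0|^{2-n/k}$ (for $k<n/2$) is a $k$-convex subsolution. It lies below $u^\varepsilon_c$ only if it is $\le 0$ on $\partial B_\varepsilon(x_0)$, i.e.\ $\beta\ge\varepsilon^{n/k-2}\big(c+\max_{\partial B_\varepsilon(x_0)}\tfrac12x^TAx\big)$. Its depth at $z$ is therefore about $c(\varepsilon/d)^{n/k-2}$, which grows linearly in $c$ rather than being independent of $c$.

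This loss is genuine, not an artifact of the barrier. For $k=1$, $u^\varepsilon_c-\psi$ is the decaying harmonic function equal to $-\psi$ on $\partial B_\varepsilon(x_0)$. Hence $w_c(z)=\tfrac12z^TAz-(c+O(1))(\varepsilon/d)^{n-2}\to-\infty$ as $c\to\infty$, so the bound $w_c(z)>2\eta$ uniformly in $c\ge c^*$ is false.

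For $A=\tfrac1nI$ and $\delta=|x_0|$ the solution is explicit:
\[
u^\varepsilon_c(x)=\psi(x)-\Big(c+\tfrac{\delta^2+\varepsilon^2}{2n}\Big)\Big(\tfrac{\varepsilon}{|x-x_0|}\Big)^{n-2}-\tfrac1n\,x_0\cdot(x-x_0)\Big(\tfrac{\varepsilon}{|x-x_0|}\Big)^{n}.
\]
Its derivative along $[x_0,0]$, in the direction of $0$, is at least $-\tfrac{\delta}{n}+\big((n-2)c-\tfrac{n-1}{n}\delta\varepsilon\big)\varepsilon^{n-2}\delta^{1-n}$. This is positive as soon as $(n-2)c>\delta^n\varepsilon^{2-n}$. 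So for any fixed ball, the non-convexity along $[x_0,0]$ disappears once $c$ is large. This matches scaling: large $c$ with a fixed ball is equivalent to $c=1$ with radius and offset shrinking proportionally.

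You should therefore state and prove the lemma with $\varepsilon$ chosen after $c$. That is what both your argument and the paper's actually establish.
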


To prove Theorem \ref{thm:main} and Theorem \ref{thm:main-1}, it suffices to prove Lemma \ref{lem:AIb0}. Indeed, suppose that $A\in\mathcal{A}_k$ and $b\in\mathbb{R}^n$. Consider the decomposition $A=P^T\Lambda P$, where $P$ is an orthogonal matrix and $\Lambda$ is a diagonal matrix with $\lambda(\Lambda)=\lambda(A)$. It is clear that $\frac{1}{2}x^TAx+b\cdot x$ takes its minimum in $\mathbb{R}^n$ at $y_0:=-A^{-1}b$. Let
\[\hat{x}=P(x-y_0).\]
By Lemma \ref{lem:AIb0}, there exists a smooth, strictly convex domain $\hat{\Omega}_0$ and a constant $\hat{c}^*$ depending only on $n$, $k$ and $\Lambda$ such that for every $\hat{c}\geq\hat{c}^*$, there exists a unique $k$-convex solution $\hat{u}\in C^\infty(\mathbb{R}^n\setminus\hat{\Omega}_0)$ to the problem 
\begin{equation*}
	\begin{cases}
		S_k(D^2\hat{u})=1  \quad\text{in }\mathbb{R}^n\setminus\bar{\hat{\Omega}}_0, \\
		\hat{u}=0  \quad\text{on }\partial\hat{\Omega}_0, \\
		\lim\limits_{|\hat{x}|\to\infty} \big(\hat{u}(\hat{x})-\big(\frac{1}{2}\hat{x}^T\Lambda \hat{x}+\hat{c}\big)\big)=0,
	    \end{cases}
\end{equation*}
while $\hat{u}$ is not quasiconvex. 
Let
\[\Omega_0=P^{-1}\hat{\Omega}_0=\{P^{-1}\hat{x}: \hat{x}\in\hat{\Omega}_0\},\quad c=\hat{c}+\frac{1}{2}b^TA^{-1}b,\quad c^*=\hat{c}^*+\frac{1}{2}b^TA^{-1}b,\]
and
\[u(x)=\hat{u}(\hat x)=\hat{u}(P(x-y_0)).\]
Then a direct computation shows 
$$S_k(D^2u(x))=S_k(P^TD^2\hat{u}(\hat{x})P)=1,\quad x\in \mathbb{R}^n\setminus\bar{\Omega}_0,$$ 
$$u(x)=0,\quad x\in\partial\Omega_0,$$ 
and
\[
\begin{split}
	0= & \lim_{|\hat{x}|\to\infty}\bigg(\hat{u}(\hat{x})-\bigg(\frac{1}{2}\hat{x}^T\Lambda\hat{x}+\hat{c}\bigg)\bigg) \\
	= & \lim_{|x|\to\infty}\bigg(u(x)-\bigg(\frac{1}{2}(x-y_0)^TP^T\Lambda P(x-y_0)+\hat{c}\bigg)\bigg)\\ 
	= & \lim_{|x|\to\infty}\bigg(u(x)-\bigg(\frac{1}{2}x^TAx-y_0^TAx+\frac{1}{2}y_0^TAy_0+c-\frac{1}{2}b^TA^{-1}b\bigg)\bigg) \\
	= & \lim_{|x|\to\infty}\bigg(u(x)-\bigg(\frac{1}{2}x^TAx+b\cdot x+c\bigg)\bigg).
  \end{split}
\]
Hence, $u\in C^\infty(\mathbb{R}^n\setminus\Omega_0)$ is the $k$-convex solution to \eqref{eq:EDP-1}. Since $\hat{u}$ is not quasiconvex, neither is $u$. Therefore, we have proved that Theorem \ref{thm:main} and Theorem \ref{thm:main-1} can be derived from Lemma \ref{lem:AIb0}.

Now, we adopt the proof strategy from \cite{Hamel-N-S} and \cite{Wang-Xiao} to prove Lemma \ref{lem:AIb0}.
\begin{proof}[Proof of Lemma \ref{lem:AIb0}.] 
	Let $c^*$ be the positive constant given by Proposition \ref{prop:u-e} and let $u^\varepsilon$ be the solution to the problem \eqref{eq:EDP-u-e} with $c\geq c^*$. We want to show that  $u^\varepsilon$ is not quasiconvex for some $\varepsilon>0$ sufficiently small. Once this is proved, Lemma \ref{lem:AIb0} follows by choosing $\Omega_0=B_\varepsilon(x_0)$. 

	We prove it by contradiction. Assume not, then for every $\varepsilon>0$, all sublevel sets of $\tilde{u}^\varepsilon$ are convex. Let $\{\varepsilon_m\}$ be a sequence satisfying $0\notin B_{\varepsilon_m}(x_0)$ and $\varepsilon_m\to0$ as $m\to\infty$, and let $\{x_m\}$ be a sequence of points satisfying $x_m\in B_{\varepsilon_m}(x_0)\setminus\{x_0\}$.
	Since $x_0\neq0$, by Proposition \ref{prop:limit}, $\tilde{u}^{\varepsilon_m}(0)\to\psi(0)$ as $m\to\infty$. Recalling that $\tilde{u}^{\varepsilon_m}\leq\psi$ in $\mathbb{R}^n$ and noticing the choice of $x_m$, we have 
	$$\tilde{u}^{\varepsilon_m}(0)\leq\psi(0)\quad\text{and}\quad \tilde{u}^{\varepsilon_m}(x_m)=0<c=\psi(0).$$
	By virtue of the convexity of the sublevel set of $\tilde{u}^{\varepsilon_m}$, we have 
	$$\tilde{u}^{\varepsilon_m}(x)\leq\psi(0),\quad\forall\, x\in[x_m,0]$$
	where $[x_m, 0]= \{ (1-t)x_m:\, 0\leq t\leq 1\}$ denotes the closed line segment connecting $x_m$ and $0$ (the half-open segment $(\cdot, 0]$ is defined similarly).
	We will infer that 
	$$\psi(x)\leq \psi(0),\quad\forall\, x\in(x_0,0].$$ 
	Indeed, for any fixed $x\in(x_0,0]$, $x=(1-t)x_0$ for some $0<t\leq1$. Let $y_m=(1-t)x_m\in
	(x_m,0]$. Then $\tilde{u}^{\varepsilon_m}(y_m)\leq\psi(0)$.  Since $x_m\to x_0$, we have $y_m\to x$ as $m\to\infty$.
	Note that 
	$$|\tilde{u}^{\varepsilon_m}(y_m)-\psi(x)|\leq|\tilde{u}^{\varepsilon_m}(y_m)-\psi(y_m)|+|\psi(y_m)-\psi(x)|\to0$$
	as $m\to\infty$, due to the continuity of $\psi$ at $x$, alongside the locally uniform convergence of $\{\tilde{u}^{\varepsilon_m}\}$ to $\psi$ near $x \neq x_0$ established in Proposition \ref{prop:limit}.
	Therefore, $\psi(x)\leq\psi(0)$ for any $x\in(x_0,0]$.
	The continuity of $\psi$ further implies 
	$$\psi(x_0)\leq \psi(0).$$ 
	This contradicts $\psi(x_0)=\frac{1}{2}x_0^TAx_0+c>c=\psi(0)$. The proof of Lemma \ref{lem:AIb0}   is thereby complete.
\end{proof}
Note that in the proof above, we actually prove that there exists an $\varepsilon$ in any neighborhood of $0$ such that the solution $u^\varepsilon$ is quasiconvex. Together with the arbitrariness of $x_0\in B_{\frac{1}{2}}(0)\setminus\{0\}$, there exists infinitely many $\Omega_0:=B_\varepsilon(x_0)$ for which the corresponding solutions $u^\varepsilon$ are not quasiconvex, and Remark \ref{rmk} follows. 

\section{Proof of Theorem \ref{thm:main-H}}\label{sec:convexity-H}
 Our proof of Theorem \ref{thm:main-H} relies on a continuous deformation method. To carry out this deformation, it is crucial to first establish the asymptotic behavior of harmonic functions and the Gaussian curvature of their level sets at infinity. 

\begin{lemma}\label{lem:H-asy}
	Let $n\geq3$ and $\Omega$ be a smooth, strictly convex domain in $\mathbb{R}^n$ such that $B_r(\bar{x})\subset\Omega\subset B_R(\bar{x})$ for some $\bar{x} \in \Omega$ and $R > r > 0$. Suppose $u\in C^\infty(\mathbb{R}^n\setminus\Omega)$ satisfies \eqref{eq:H-EDP}.
	Then there exists a positive constant $M$ with $r^{n-2}\leq M\leq R^{n-2}$, such that as $|x|\to\infty$,
	$$u(x)=M|x|^{2-n}+O(|x|^{1-n}),$$
	$$Du(x)=-M(n-2)|x|^{-n}x+O(|x|^{-n}),$$
	$$D^2 u(x) = -M(n-2)|x|^{-n} I + Mn(n-2)|x|^{-n-2} x \otimes x + O(|x|^{-n-1})=O(|x|^{-n}).$$
\end{lemma}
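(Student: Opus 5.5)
The plan is to combine a Kelvin transform with comparison against radial barriers. First, the maximum principle confines $u$ between two explicit harmonic functions. Since $B_r(\bar x)\subset\Omega\subset B_R(\bar x)$, the radial harmonic functions $r^{n-2}|x-\bar x|^{2-n}$ and $R^{n-2}|x-\bar x|^{2-n}$ vanish at infinity. On $\partial\Omega$ the first is $\le 1$ and the second is $\ge 1$. Comparing on $\mathbb{R}^n\setminus\bar\Omega$ with $u=1$ on $\partial\Omega$ and $u\to0$ at infinity gives
\[
r^{n-2}|x-\bar x|^{2-n}\le u(x)\le R^{n-2}|x-\bar x|^{2-n}.
\]

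Next, to extract the expansion, I would take the Kelvin transform $v(y)=|y|^{2-n}u(\bar x+y/|y|^2)$, defined in a punctured neighbourhood of $y=0$. It is harmonic there, and the two-sided bound above says $r^{n-2}\le v\le R^{n-2}$. A bounded harmonic function has a removable isolated singularity, so $v$ extends harmonically, hence smoothly, to a full ball $B_\rho(0)$. Set $M:=v(0)$. Passing to the limit $y\to0$ in the bounds gives $r^{n-2}\le M\le R^{n-2}$; in particular $M>0$.

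Taylor expanding, $v(y)=M+Dv(0)\cdot y+O(|y|^2)$, with corresponding expansions for $Dv$ and $D^2v$. Inverting the transform, $u(x)=|x-\bar x|^{2-n}v\big((x-\bar x)/|x-\bar x|^2\big)$, and with $z=x-\bar x$ this gives
\[
u=M|z|^{2-n}+\big(Dv(0)\cdot z\big)|z|^{-n}+O(|z|^{-n}) .
\]
Replacing $|z|$ by $|x|$ costs only $O(|x|^{1-n})$, since $|x-\bar x|^{2-n}-|x|^{2-n}=O(|x|^{1-n})$. This yields the first formula.

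For the derivatives, I would differentiate the convergent expansion $u(x)=\sum_{j\ge0}P_j(z)|z|^{2-n-2j}$. Here $P_j$ are homogeneous harmonic polynomials of degree $j$, coming from the Taylor series of $v$ at $0$, so the $j$-th term is homogeneous of degree $2-n-j$. Each derivative lowers homogeneity by one, and the tail is controlled uniformly because the series converges in a ball. Alternatively, I would apply interior gradient and Hessian estimates for harmonic functions on $B_{|x|/2}(x)$ to $u-M|x-\bar x|^{2-n}$, which is harmonic there and $O(|x|^{1-n})$. Derivatives then gain factors $|x|^{-1}$, giving errors $O(|x|^{-n})$ for $Du$ and $O(|x|^{-n-1})$ for $D^2u$. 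The main terms come from differentiating $M|x|^{2-n}$: first $-M(n-2)|x|^{-n}x$, and then $-M(n-2)|x|^{-n}I+Mn(n-2)|x|^{-n-2}x\otimes x$. Shifting from $\bar x$ to $0$ in these terms again costs one extra power of decay.

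The main obstacle is this last bookkeeping rather than the removability itself. The interior-estimate route is the cleanest, and is the one I would write. The point to check carefully is that the error claimed for $Du$ is $O(|x|^{-n})$, which is weaker than the main term's decay $|x|^{1-n}$. That is consistent, because the dipole term contributes exactly at order $|x|^{-n}$ in $Du$ and at order $|x|^{-n-1}$ in $D^2u$, matching the stated errors.
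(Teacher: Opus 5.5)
Your proposal is correct and matches the paper's proof step for step: comparison with the radial barriers, a Kelvin transform whose bounded singularity is removable (which produces $M$ and the $O(|x|^{1-n})$ expansion), and interior harmonic estimates on $u$ minus the leading term to get the $Du$ and $D^2u$ asymptotics. The only difference is cosmetic. You center the Kelvin transform and the leading term at $\bar x$, which makes $r^{n-2}\le M\le R^{n-2}$ immediate from $M=v(0)$, and then shift to the origin at the cost of one power of decay; the paper centers at the origin from the start.
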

\begin{proof} 
	Since $B_r(\bar{x})\subset\Omega\subset B_R(\bar{x})$, we have 
	$$\bigg(\frac{r}{|x-\bar{x}|}\bigg)^{n-2}\leq 1\leq \bigg(\frac{R}{|x-\bar{x}|}\bigg)^{n-2}\quad\text{on }\partial\Omega.$$
	The comparison principle for harmonic functions gives 
	\begin{equation}\label{eq:mM}
		\bigg(\frac{r}{|x-\bar{x}|}\bigg)^{n-2}\leq u(x)\leq \bigg(\frac{R}{|x-\bar{x}|}\bigg)^{n-2}\quad\text{in }\mathbb{R}^n\setminus\Omega.
	\end{equation}
	Multiplying by $|x|^{n-2}$ and taking the limit as $|x| \to \infty$, we deduce that the asymptotic constant $M$, if it exists, must satisfy $r^{n-2} \le M \le R^{n-2}$.
	
	To establish the existence of $M$, let $v$ be the Kelvin transform of $u$ in $\mathbb{R}^n\setminus \bar{B}_{\rho_0}(0)$ for sufficiently large $\rho_0$, that is
\[ v(x) = |x|^{2-n} u\bigg(\frac{x}{|x|^2}\bigg), \quad x\in B_{\frac{1}{\rho_0}}(0)\setminus\{0\}.\]
Since \( u \) is harmonic in $\mathbb{R}^n\setminus B_{\rho_0}(0)$, \( v \) is harmonic in the punctured ball $B_{\frac{1}{\rho_0}}(0)\setminus\{0\}$. From \eqref{eq:mM}, we know $v$ is bounded in $B_{\frac{1}{\rho_0}}(0)\setminus\{0\}$ and hence $0$ is its removable singularity. That is $v$ is harmonic and analytic in $B_{\frac{1}{\rho_0}}(0)$. Then \( v(x) = \sum_{k=0}^\infty p_k(x) \) where \( p_k \) is a homogeneous polynomial of degree \( k \). Return to \( u \),
\[ u(x) = |x|^{2-n} v\bigg(\frac{x}{|x|^2}\bigg) = \sum_{k=0}^\infty |x|^{2-n} p_k\bigg(\frac{x}{|x|^2}\bigg) = \sum_{k=0}^\infty |x|^{2(1-k)-n} p_k(x). \] 
This guarantees the existence of $M$ and $u(x) = M|x|^{2-n} + O(|x|^{1-n})$ as $|x| \to \infty$. 

Next, let $w(x)=u(x)-M|x|^{2-n}$. Then $w$ is harmonic in $\mathbb{R}^n\setminus\bar{B}_{\rho_0}(0)$ and $w(x)=O(|x|^{1-n})$ as $|x|\to\infty$. Denote $\rho=|x| $. By the interior estimate, for $|x|$ sufficiently large, 
$$|Dw(x)|\leq \frac{C_n}{\rho}\sup_{y\in B_{\frac{\rho}{2}}(x)}|w(y)|\leq C\rho^{-n},$$
$$|D^2w(x)|\leq\frac{C_n}{\rho^2}\sup_{y\in B_{\frac{\rho}{2}}(x)}|w(y)|\leq C\rho^{-n-1}.$$
Combining with the derivatives of $M|x|^{2-n}$, we obtain the desired asymptotic expansions for $Du$ and $D^2u$. 
\end{proof}

\begin{lemma}\label{lem:K-asy}
	Let $n\geq3$ and $\Omega$ be a smooth, strictly convex domain in $\mathbb{R}^n$. Suppose $u \in C^\infty(\mathbb{R}^n\setminus\Omega)$ satisfies \eqref{eq:H-EDP}. Let $K(x)$ denote the Gaussian curvature of the level set $\{y \in \mathbb{R}^n\setminus\Omega: u(y) = u(x)\}$ passing through $x$. Then, $K(x)$ satisfies the asymptotic expansion as $|x|\to\infty$,
$$K(x) = |x|^{1-n} + O(|x|^{-n}).$$  
\end{lemma}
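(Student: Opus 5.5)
The approach is to use the classical formula for the Gaussian curvature of a regular level set,
\[
K=-\frac{1}{|Du|^{n+1}}\det\begin{pmatrix} D^2u & Du^T\\ Du & 0\end{pmatrix},
\]
which equals $\det\big(-\tfrac{1}{|Du|}\,\Pi^TD^2u\,\Pi\big)$. Here $\Pi$ is an orthonormal frame of the tangent space $Du^\perp$, and the sign is chosen so that superlevel sets (which contain $\Omega$, since $u$ decreases outward) have positive curvature. I will substitute the expansions from Lemma \ref{lem:H-asy} into this formula. Lemma \ref{lem:H-asy} applies because $\Omega$ is bounded and contains a ball, and it gives $M\in[r^{n-2},R^{n-2}]$.

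First I treat the gradient. Writing $\rho=|x|$, Lemma \ref{lem:H-asy} gives $Du=-M(n-2)\rho^{-n}x+O(\rho^{-n})$. Hence $|Du|=M(n-2)\rho^{1-n}\big(1+O(\rho^{-1})\big)$ and the unit normal satisfies $\nu=Du/|Du|=-x/\rho+O(\rho^{-1})$. In particular $Du\neq0$ for large $\rho$, so the level sets there are smooth hypersurfaces and $K$ is well defined. I choose $\Pi$ as an orthonormal basis of $\nu^\perp$; it differs from an orthonormal basis of $x^\perp$ by $O(\rho^{-1})$.

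Next I treat the tangential Hessian. By Lemma \ref{lem:H-asy}, $D^2u=-M(n-2)\rho^{-n}(I-n\hat x\otimes\hat x)+O(\rho^{-n-1})$ with $\hat x=x/\rho$. Restricted to $\nu^\perp$, the term $\hat x\otimes\hat x$ contributes only $O(\rho^{-1})\cdot\rho^{-n}$, because $\hat x$ is within $O(\rho^{-1})$ of the normal direction. So
\[
\Pi^TD^2u\,\Pi=-M(n-2)\rho^{-n}\big(I_{n-1}+O(\rho^{-1})\big).
\]
Dividing by $-|Du|$ gives the second fundamental form
\[
h=\frac{M(n-2)\rho^{-n}}{M(n-2)\rho^{1-n}}\big(I_{n-1}+O(\rho^{-1})\big)=\rho^{-1}\big(I_{n-1}+O(\rho^{-1})\big).
\]
Taking the determinant yields $K=\rho^{1-n}\big(1+O(\rho^{-1})\big)=|x|^{1-n}+O(|x|^{-n})$. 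As a consistency check, the constant $M$ cancels, as it should, since the level sets are asymptotically spheres of radius $\rho$.

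The main obstacle is bookkeeping the error orders, in particular making sure the large eigenvalue direction $n\hat x\otimes\hat x$ of the Hessian does not leak into the tangential block at an order larger than $\rho^{-n-1}$. This is controlled because $\nu-(-\hat x)=O(\rho^{-1})$, so the quadratic form $(\hat x\cdot\tau)^2$ is $O(\rho^{-2})$ for unit tangent vectors $\tau$. The cross terms $(\hat x\cdot\tau)$ against the $O(1)$-sized entries, at the level of the bordered determinant, are $O(\rho^{-1})$ relative to the main term, which is still acceptable. I will verify this either via the bordered determinant expansion or via the tangential-block formulation above, whichever keeps the error terms cleaner.
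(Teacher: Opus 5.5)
Your proposal is correct and follows the paper's argument. You rotate so the normal is $\mp Du/|Du|$, use $\nu=-x/|x|+O(|x|^{-1})$ to get $x\cdot\tau=O(1)$ for unit tangents $\tau$, so that the tangential Hessian block is $-M(n-2)\rho^{-n}I_{n-1}+O(\rho^{-n-1})$, and divide its determinant by $|Du|^{n-1}$. The paper's cofactor formula $(-1)^{n-1}|Du|^{-(n+1)}\sum_{\alpha,\beta}\frac{\partial\det D^2u}{\partial u_{\alpha\beta}}u_\alpha u_\beta$ reduces to exactly your $\det\bigl(-|Du|^{-1}\Pi^TD^2u\,\Pi\bigr)$.

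One small slip: the bordered determinant equals $-|Du|^2\det(\Pi^TD^2u\,\Pi)$. Your prefactor $-|Du|^{-(n+1)}$ therefore matches the tangential-block formula only for odd $n$; for a function decreasing outward the correct prefactor is $(-1)^n|Du|^{-(n+1)}$. So carry out the computation with the tangential-block form, as you in fact do.
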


\begin{proof} 
	As stated by Kawohl \cite{Kawohl-1988}, $u$ has no critical points, meaning that $|Du|>0$ in $\mathbb{R}^n\setminus\Omega$.
	We fix an arbitrary point $x \in \mathbb{R}^n\setminus\bar{\Omega}$ with $\rho = |x|$ sufficiently large, and choose the coordinate $\{e_1, e_2, \dots, e_n\}$ such that
	$$e_n = -\frac{D u(x)}{|D u(x)|}.$$
	It is well known that $K$ can be represented as 
	\begin{equation}\label{eq:K}
		K = (-1)^{n-1} |Du|^{-(n+1)} \sum_{\alpha,\beta} \frac{\partial \det D^2 u}{\partial u_{\alpha\beta}} u_\alpha u_\beta.
	\end{equation}
	Clearly, $u_i(x) = 0$ for $1 \le i \le n-1$. By Lemma \ref{lem:H-asy}, we have
	$$u_n(x) = -|D u(x)| = -M(n-2)\rho^{1-n} + O(\rho^{-n}),$$
	and $e_n$ is asymptotically aligned with the radial direction $\frac{x}{\rho}$, 
	$$e_n = \frac{M(n-2)\rho^{-n}x+O(\rho^{-n})}{M(n-2)\rho^{1-n} + O(\rho^{-n})} = \frac{\frac{x}{\rho} + O(\rho^{-1})}{1 + O(\rho^{-1})}= \frac{x}{\rho} + O(\rho^{-1}).$$
	Then $x \cdot e_n = \rho + O(1)$, and so
	\begin{equation}\label{eq:x-i-proj}
		x \cdot e_i = O(1) \quad \text{for } 1 \le i \le n-1,
	\end{equation}
	
	Next, we evaluate the Hessian matrix $D^2 u$. Substituting \eqref{eq:x-i-proj} into the expansion of $u_{ij} = e_i^T(D^2 u) e_j$ in Lemma \ref{lem:H-asy} yields
	$$u_{ij} = -M(n-2)\rho^{-n} \delta_{ij} + O(\rho^{-n-1}) \quad \text{for } 1 \le i, j \le n-1.$$
	Plugging these into \eqref{eq:K}, we get
	\begin{equation}\label{eq:K-sim}
		K = \frac{(-1)^{n-1}}{|D u|^{n-1}} \frac{\partial \det D^2u}{\partial u_{nn}},
	\end{equation}
	and
	\begin{equation*}
		\begin{split}
			\frac{\partial \det D^2u}{\partial u_{nn}} & = \left( -M(n-2)\rho^{-n} \right)^{n-1} + O(\rho^{-n(n-1)-1}) \\
			& = (-1)^{n-1} M^{n-1}(n-2)^{n-1}\rho^{-n^2+n} + O(\rho^{-n^2+n-1}).
		\end{split}
	\end{equation*}
	Meanwhile, 
	\begin{equation*}
		\begin{split}
			|D u|^{n-1} & = \left( M(n-2)\rho^{1-n} + O(\rho^{-n}) \right)^{n-1} \\
			& = M^{n-1}(n-2)^{n-1}\rho^{-n^2+2n-1} + O(\rho^{-n^2+2n-2}).
		\end{split}
	\end{equation*}
	Substituting these two expansions back into \eqref{eq:K-sim}, we obtain
	$$\begin{aligned} K(x) &= \frac{\rho^{-n^2+n} + O(\rho^{-n^2+n-1})}{\rho^{-n^2+2n-1} + O(\rho^{-n^2+2n-2})} \\ &= \rho^{(-n^2+n) - (-n^2+2n-1)} \left( \frac{1 + O(\rho^{-1})}{1 + O(\rho^{-1})} \right) \\ &= \rho^{1-n} \left( 1 + O(\rho^{-1}) \right) \\ &= \rho^{1-n} + O(\rho^{-n}). \end{aligned}$$
	Since $x \in \mathbb{R}^n\setminus\bar{\Omega}$ is arbitrary, this establishes the asymptotic behavior of $K$.
\end{proof}

\begin{proof}[Proof of Theorem \ref{thm:main-H}.]
	Without loss of generality, we may assume $0\in\Omega_1$. Set $\Omega_0=B_{r}(0)$ for some $r>0$ such that $B_r(0) \subset \Omega_1$ and
	$$\Omega_t=(1-t)\Omega_0+t\Omega_1,\quad\forall\, 0\leq t\leq1.$$
	Clearly, $\Omega_t$ is a smooth family of strictly convex domains.
	Let $u^t\in C^\infty(\mathbb{R}^n\setminus\Omega_t)$ be the solution to 
	\begin{equation*}
		\begin{cases}
			\Delta u^t=0  \,\qquad\text{in }\mathbb{R} ^n\setminus\bar{\Omega}_t, \\
			u^t=1 \quad\quad\quad \text{on }\partial\Omega_t, \\
			\lim\limits_{|x|\to\infty}u^t(x)=0. &
		\end{cases}
	\end{equation*}
	As stated by Kawohl \cite{Kawohl-1988}, $|Du^t|>0$ in $\mathbb{R}^n\setminus\Omega_t$. Note that $\|\partial\Omega_t\|_{C^{3,\alpha}}$ have uniform bounds, and $M(\Omega_t)$ have uniform bounds, where $M(\Omega_t)$ is the positive constant determined by $\Omega_t$ as in Lemma \ref{lem:H-asy}. By standard global Schauder estimates, we have the uniform estimates on $\|u^t\|_{C^{3,\alpha}(\mathbb{R}^n\setminus\bar{\Omega}_t)}$ with the bound depending only on the geometry of $\Omega_1$.

	Define
	$$I = \{ t \in [0,1]: K_s > 0 \text{ everywhere in } \mathbb{R}^n\setminus\bar{\Omega}_t, \forall\, 0\leq s\leq t  \},$$
	where $K_t(x)$ is the Gaussian curvature of the level set of $u^t$ passing through $x$. It is clear that $0 \in I$. Owing to the uniform estimates on $\|u^t\|_{C^{3,\alpha}(\mathbb{R}^n\setminus\bar{\Omega}_t)}$,  $I$ is relatively open in $[0,1]$. Denote $t_0=\sup I\in(0,1]$. Then $t\in I$ for any $0\leq t<t_0$.

	We argue by contradiction.  If Theorem \ref{thm:main-H} is not true, by the openness of $I$, then $0<t_0<1$ is the first time such that the  Gaussian curvature $K_{t_0}$ of the level sets of $u^{t_0}$ becomes $0$ at some point $x_{t_0}\in\mathbb{R}^n\setminus\bar{\Omega}_{t_0}$. Take a sequence $\{t_i\}$ such that $t_i\to t_0$ $(0<t_i<t_0)$. By Theorem 2.1 of \cite{Ma-Zhang-2021}, we have
	$$\psi_{t_i}=(|Du^{t_i}|^{n-3}K_{t_i})^{\frac{1}{n-1}}$$
	is superharmonic in $\mathbb{R}^n\setminus\bar{\Omega}_{t_i}$. 
	We claim that there exists a positive constant $c_0$ such that for every $i$,
	\begin{equation}\label{eq:psi-cu}
		\psi_{t_i}\geq c_0u^{t_i}\quad\text{in }\mathbb{R}^n\setminus\bar{\Omega}_{t_i}.
	\end{equation}
	Indeed, since $\|\partial\Omega_{t_i}\|_{C^{3,\alpha}}$ have uniform bounds, the strictly convex domains $\Omega_{t_i}$ satisfy a uniform exterior ball condition. That is, there exists $r_0>0$, such that for any $i$ and $y_{t_i}\in\partial\Omega_{t_i}$, we have $\bar{\Omega}_{t_i}\cap\bar{B}_{r_0}(z_{t_i})=\{y_{t_i}\}$ for some $z_{t_i}\in\mathbb{R}^n\setminus\bar{\Omega}_{t_i}$. The strong maximum principle implies $0<u^{t_i}<1$ in $\mathbb{R}^n\setminus\bar{\Omega}_{t_i}$. Assume $\Omega_1\subset B_{R_0}(0)$, then the comparison principle gives $u^{t_i}\leq (\frac{R_0}{|x|})^{n-2}$ in $\mathbb{R}^n \setminus \bar{\Omega}_{t_i}$. 
	We may assume $r_0>2R_0$, then $u^{t_i}(z_{t_i})\leq(\frac{R_0}{r+r_0})^{n-2}<\frac{1}{2}$. By the quantitative Hopf lemma (see for instance Proposition 1.34 of \cite{Han-Lin}), we obtain a uniform positive lower bound of $|Du^{t_i}|$ on $\partial\Omega_{t_i}$,
	$$|Du^{t_i}(y_{t_i})|\geq\frac{C_n}{r_0}(u^{t_i}(y_{t_i})-u^{t_i}(z_{t_i}))>\frac{C_n}{2r_0}.$$
	 Moreover, since $u^{t_i}$ is constant on $\partial\Omega_{t_i}$, $K_{t_i}$ coincides with the Gaussian curvature of $\partial\Omega_{t_i}$, which thus also admits a uniform positive lower bound due to the strict convexity of $\Omega_{t_i}$. Hence, $\psi_{t_i}$ have a uniform positive lower bound on $\partial\Omega_{t_i}$, denoted as $c_0$. On the other hand, it follows from Lemmas \ref{lem:H-asy} and \ref{lem:K-asy} that
	 $$\lim_{|x|\to\infty}\psi_{t_i}(x)=\lim_{|x|\to\infty}u^{t_i}(x)=0.$$
	Applying the comparison principle, we obtain \eqref{eq:psi-cu}, the claim is thus proved.

	The interior Schauder estimates and Arzel\`{a}-Ascoli theorem imply that $u^{t_i} \to u^{t_0}$ in $C^3$ near $x_{t_0}$.
	Sending  $i \to \infty$ in \eqref{eq:psi-cu} yields
	$$\psi_{t_0}(x_{t_0}) \ge c_0 u^{t_0}(x_{t_0}).$$
	The strong maximum principle implies $u^{t_0}(x_{t_0}) > 0$, while the degeneracy $K_{t_0}(x_{t_0}) = 0$ gives $\psi_{t_0}(x_{t_0}) = 0$, a contradiction. This completes the proof.
\end{proof}

\section*{Acknowledgments}
C. Wang is supported by Beijing Natural Science Foundation (No. 1254049), National Natural Science Foundation of China (No. 12526518) and ``the Fundamental Research Funds for the Central Universities'' in UIBE (No. 23QD04). B. Wang is supported by National Natural Science Foundation of China (No. 12271028).  Z. Wang is supported by National Natural Science Foundation of China (No. 12141105). Part of this work has been done while the second named author is visiting the Department of Mathematics ``Federigo Enriques'' of Universit\`{a} degli Studi di Milano with support from the China Scholarship Council. The hospitality of the Department is gratefully acknowledged.

\end{document}